\definecolor{darkblue}{rgb}{0.0,0.0,0.3}
\theoremstyle{plain}
\newtheorem{theorem}{Theorem}[section]
\newtheorem*{theorem*}{Theorem}
\newtheorem{lemma}[theorem]{Lemma}
\newtheorem{proposition}[theorem]{Proposition}
\newtheorem*{proposition*}{Proposition}
\newtheorem{corollary}[theorem]{Corollary}
\newtheorem*{corollary*}{Corollary}
\theoremstyle{definition}
\newtheorem{remark}[theorem]{Remark}
\numberwithin{equation}{section}
\renewcommand{\Im}{\operatorname{Im}}
\renewcommand{\Re}{\operatorname{Re}}
\DeclareMathOperator{\R}{\mathbb{R}}
\DeclareMathOperator*{\Res}{Res}
\title{Second Moments in the Generalized Gauss Circle Problem}
\author[Hulse]{Thomas A. Hulse}
  \address{Boston College}
\author[Kuan]{Chan Ieong Kuan}
  \address{Sun Yat-Sen University}
\author[Lowry-Duda]{David Lowry-Duda}
  \address{Warwick Mathematics Institute, University of Warwick}
  \email{d.lowry@warwick.ac.uk}
  \thanks{This material is based upon work supported by the National Science
    Foundation Graduate Research Fellowship Program under Grant No. DGE 0228243.
    David also gratefully acknowledges support from EPSRC Programme Grant
    EP/K034383/1 LMF:\ L-Functions and Modular Forms.}
\author[Walker]{Alexander Walker}
  \address{Rutgers University}
  \thanks{This material is based upon work supported by the National Science Foundation under Grant No. DMS-1440140 while two of the authors were in residence at the Mathematical Sciences Research Institute in Berkeley, California, during the Spring 2017 semester.}
\begin{document}

\begin{abstract}
  The generalized Gauss circle problem concerns the lattice point discrepancy
  of large spheres.
  We study the Dirichlet series associated to $P_k(n)^2$, where
  $P_k(n)$ is the discrepancy between the volume of the $k$-dimensional sphere
  of radius $\sqrt{n}$ and the number of integer lattice points contained in
  that sphere.
  We prove asymptotics with improved power-saving error terms for smoothed
  sums, including $\sum P_k(n)^2 e^{-n/X}$ and the Laplace transform
  $\int_0^\infty P_k(t)^2 e^{-t/X}dt$, in dimensions $k \geq 3$.
  We also obtain main terms and power-saving error terms for the sharp sums
  $\sum_{n \leq X} P_k(n)^2$, along with similar results for the sharp integral
  $\int_0^X P_3(t)^2 dt$.
  This includes producing the first power-saving error term in mean square for
  the dimension-three Gauss circle problem.
\end{abstract}

\maketitle

\section{Introduction}

Let $r_k(m)$ denote the number of integer $k$-tuples $(n_1,n_2,\ldots,n_k)$ such that $n_1^2+\cdots + n_k^2 =m$, and let $S_k(n)$ denote the sum of $r_k(m)$ for $m \leq n$,
\[
  S_k(n) = \sum_{0 \leq m \leq n} r_k(m).
\]
Geometrically, $S_k(n)$ counts the number of lattice points in $\mathbb{Z}^k$ contained within $B_k(\sqrt{n})$, the $k$-dimensional sphere of radius $\sqrt{n}$.
Let $V_k$ denote the volume of $B_k(1)$, the $k$-sphere of radius $1$.
It is intuitively clear that $S_k(n) \sim \mathrm{Vol}(B_k(\sqrt{n})) = V_k n^{k/2}$ as $n \to \infty$.

To describe this asymptotic more precisely, set
\begin{equation}\label{s_k_def}
  S_k(n) = V_k n^{k/2} + P_k(n).
\end{equation}
In the $k=2$ case, estimation of $P_k(n)$ is the famous \emph{Gauss circle problem}.
Here, Gauss established $P_2(n) = O(\sqrt{n})$ by relating $P_2(n)$ to the area of a narrow annulus enclosing the boundary of $B_2(\sqrt{n})$~\cite{IvicSurvey2006}.

For general $k \geq 2$, the pursuit of a minimal exponent $\alpha_k$ for which $P_k(n) = O(n^{\alpha_k +\epsilon})$ for any $\epsilon > 0$ is now known as the \emph{generalized Gauss circle problem}.
Gauss' geometric argument readily generalizes to show that $\alpha_k \leq (k-1)/2$, but $\Omega$-type results (see~\cite{IvicSurvey2006} for a survey) support the conjecture that
\begin{align} \label{eq:conjectured_exponents}
  \alpha_k = \begin{cases} \frac{1}{4}, \qquad& k = 2 \\ \frac{k}{2}-1, \qquad &k>2 \end{cases}
\end{align}
are the true sizes.
For $k \geq 4$, this conjecture is known to be true, and for $k \geq 5$ the order of growth of $P_k(n)$ is known (up to constants), as described in~\cite{Kratzel2000}.

Far less is known in the case $k \leq 3$.
In the case $k=2$, the first improvement on Gauss' result is due to Sierpi\'nski~\cite{Sierpinski06}, who established $P_2(n) = O(n^\frac{1}{3})$ using Poisson summation and the theory of exponential sums.
Incremental progress has led to Huxley's \emph{discrete Hardy-Littlewood method}~\cite{Huxley03} and the result $P_2(n) = O\left( n^{131/416+\epsilon}\right)$.
A recent preprint of Bourgain and Watt~\cite{BourgainWatt2017} proposes an improvement of this result to $P_2(n)=O(n^{517/1648+\epsilon})$.

Notable progress in dimension $k=3$ includes Landau's result $P_3(n) = O(n^{3/4})$~\cite{Landau1919} and a long series of results due to Vinagradov culminating in $P_3(n) = O(n^{2/3} (\log n)^6)$~\cite{Vinogradov1963}.
The current best result is due to Heath-Brown~\cite{HeathBrown99}, who obtained
\[
	P_3(n) = O\left(n^{\frac{21}{32}+\epsilon}\right).
\]

Some of the best evidence for the conjectured exponents~\eqref{eq:conjectured_exponents} in the generalized Gauss circle problem is given by \emph{mean square} results describing
\[
  \int_0^X \left(P_k(x) \right)^2 \, dx.
\]
In dimension $k=2$, the earliest result is due to Landau~\cite[p. 250-263]{Landau69}, who showed that
\[
 \int_0^X \left(P_2(x) \right)^2 \, dx = c_2 X^{3/2} +O(X^{1+\epsilon}).
\]
  The best result at present is due to Lau and Tsang~\cite{LT09}, who proved
\[
  \int_0^X \left(P_2(x) \right)^2 \, dx = \frac{1}{3\pi^2} \sum_{n=1}^\infty \frac{r^2_2(n)}{n^{3/2}} X^{3/2} + O \left( X\log X \log \log X\right).
\]

In the case $k=3$, a long-standing result of the above form was due to Jarn\'{i}k~\cite{Jarnik40}, who established
\begin{equation}\label{jarnik-mean-square}
  \int_0^X \left(P_3(x) \right)^2 \, dx = c_3 X^2 \log X + O \left(X^2 (\log X )^{1/2} \right)
\end{equation}
for some $c_3>0$ using the Hardy-Littlewood method.
This error was more recently improved to $O(X^2)$ by Lau~\cite{Lau1999}.
For $k \geq 4$, Jarn\'{i}k further proved mean square results with power-savings error terms of the form
\begin{equation}\label{jarnik-mean-square-high-dim}
  \int_0^X \left(P_k(x) \right)^2 \, dx = c_k X^{k-1} + O \left( g(X) \right),
\end{equation}
with
\begin{equation*}
  g(X) = \begin{cases}
    X^{\frac{5}{2}}\log X & \text{if } k = 4, \\
    X^{3}\log^2 X & \text{if } k = 5, \\
    X^{k-2} & \text{if } k > 5.
  \end{cases}
\end{equation*}
The relatively large error term in dimension three suggests that this case is the most mysterious and least understood.  For $k>5$, these results are optimal, while for $k\leq 5$ these bounds may be improved and it may be possible to extract additional  lower order terms.
More detail on progress towards the generalized Gauss circle problem and its many cousins can be found in the excellent survey~\cite{IvicSurvey2006}.

In this paper, we consider mean square estimates for the generalized Gauss circle problem, focusing on the cases $k>2$.  Our first result is a mean square estimate with exponential smoothing.
\begin{theorem}\label{theorem:main_smooth}
  For $k \geq 3$ and any $\epsilon > 0$,
  \begin{equation}
    \begin{split}
      \sum_{n = 1}^\infty P_k(n)^2 e^{-n/X} &= \delta_{[k=3]} C'_3  X^{k-1} \left(\log X +1-\gamma\right)+ C_k \Gamma(k-1)X^{k-1}  \\
      &\quad + \delta_{[k = 4]} C'_4 \Gamma(k-\tfrac{3}{2})X^{k - \frac{3}{2}} + O_\epsilon(X^{k-2 + \epsilon}),
    \end{split}
  \end{equation}
  where $C_k$, $C_3'$, and $C_4'$ are explicit constants, and
  \begin{equation*}
    \delta_{[k=n]} = \begin{cases}
      0 & \text{if } k \neq n, \\
      1 & \text{if } k = n
    \end{cases}
  \end{equation*}
  is a Kronecker delta indicator function.
\end{theorem}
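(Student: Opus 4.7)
The plan is to approach the smoothed sum via Mellin inversion. I would define the Dirichlet series
\[
D_k(s) := \sum_{n=1}^\infty \frac{P_k(n)^2}{n^s},
\]
which converges absolutely for $\Re(s)$ sufficiently large (heuristically for $\Re(s)>k-1$). The standard Mellin identity
\(
e^{-y} = \tfrac{1}{2\pi i}\int_{(\sigma)} \Gamma(s)\,y^{-s}\,ds
\)
for $\sigma>0$ converts the target sum into the contour integral
\[
\sum_{n=1}^\infty P_k(n)^2 e^{-n/X} \;=\; \frac{1}{2\pi i}\int_{(\sigma)} D_k(s)\,\Gamma(s)\,X^s\,ds,
\]
and the theorem then reduces to shifting the contour to $\Re(s)=k-2+\epsilon$ and summing residues.

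The analytic heart of the argument is to meromorphically continue $D_k(s)$ into the strip $k-2 < \Re(s) \le k-1$ and to catalog its poles there. I would expect $D_k(s)$ to have a simple pole at $s=k-1$ for all $k\ge 3$, a \emph{double} pole at $s=2$ in the special case $k=3$, and an additional simple pole at $s=5/2$ in the special case $k=4$. Structurally, the pole at $s=k-1$ should arise from a diagonal contribution to a shifted-convolution sum of $r_k$-values; the extra log in dimension three reflects that the underlying Rankin--Selberg convolution of the weight-$3/2$ theta series $\theta(z)=\sum r_3(n)e(nz)$ has a double pole at the edge of convergence, while the extra pole at $5/2$ in dimension four reflects a residual Eisenstein-type contribution that does not persist in higher dimensions.

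Shifting the contour from $\Re(s)=\sigma$ down to $\Re(s)=k-2+\epsilon$ picks up each residue just identified. At the simple pole $s=k-1$ one obtains $C_k\Gamma(k-1)X^{k-1}$ with $C_k$ the residue of $D_k$. In the $k=3$ case, expanding
\[
\Gamma(s)X^s \;=\; X^{k-1}\Gamma(k-1)\bigl(1 + (\log X + \psi(k-1))(s-(k-1)) + O((s-(k-1))^2)\bigr)
\]
against the double pole and using $\psi(2)=1-\gamma$ produces exactly the combination $C_3'\,X^2(\log X + 1-\gamma)$, together with an $X^2$ contribution that merges into $C_3\,\Gamma(2)\,X^2$. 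In the $k=4$ case the pole at $s=5/2$ contributes $C_4'\,\Gamma(5/2)\,X^{5/2}$. The remaining integral on $\Re(s)=k-2+\epsilon$ is bounded by combining polynomial vertical growth of $D_k(s)$ with the exponential decay of $\Gamma(s)$ on vertical lines, yielding the error $O_\epsilon(X^{k-2+\epsilon})$.

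The principal obstacle is establishing the required meromorphic continuation of $D_k(s)$ past $\Re(s)=k-1$ together with polynomial growth on vertical lines. This reduces to controlling shifted convolution sums $\sum_m r_k(m)r_k(m+h)$ or, dually, to analyzing spectral expansions of Poincar\'e-series inner products against $\theta^k$ and $E\cdot\theta^k$ for appropriate Eisenstein series $E$. The dimension-three case, where $\theta^3$ has half-integral weight, is the technically most demanding: the spectral decomposition must incorporate Maass forms of half-integral weight, and it is this arithmetic structure that ultimately forces the double pole and produces the $\log X$ main term.
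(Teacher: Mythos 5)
Your overall strategy---Mellin inversion against $\Gamma(s)X^s$, meromorphic continuation of the Dirichlet series, a contour shift to $\Re s=k-2+\epsilon$, and a polynomial-growth bound in vertical strips to control the shifted integral---is exactly the paper's (the paper normalizes $D(s,P_k\times P_k)=\sum P_k(n)^2 n^{-(s+k-2)}$, so its poles sit at $s=1$ and $s=\tfrac12$ rather than $k-1$ and $\tfrac52$, but that is cosmetic). Your predicted pole structure is correct, and your expansion of $\Gamma(s)X^s$ at the double pole using $\psi(2)=1-\gamma$ is precisely how the $C_3'X^2(\log X+1-\gamma)$ term arises.

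The gap is that essentially all of the content of the theorem lives in the step you defer, and two of your structural guesses about that step would mislead you if you tried to carry it out. First, the pole at $s=k-1$ does \emph{not} come from the diagonal $\sum r_k(m)^2$: in the paper's decomposition the diagonal's rightmost pole cancels exactly against a pole of the ``non-spectral'' part (the Dirichlet series built from the Eisenstein Fourier coefficients $\varphi_{\mathfrak{a}h}(\tfrac{k}{2})$), and the surviving residue $C_k$ is assembled from that non-spectral part together with $L(s,\theta^k)$ contributions coming from the cross term $-2V_k n^{k/2}S_k(n)$ in $P_k(n)^2=S_k(n)^2-2V_kn^{k/2}S_k(n)+V_k^2n^k$. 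Second, the spectral input is not half-integral-weight Maass forms: the shifted convolution is unfolded from $\langle \lvert\theta^k\rvert^2\Im(\cdot)^{k/2},P_h(\cdot,\overline{s})\rangle$, a weight-zero inner product on $\Gamma_0(4)$, and the key technical obstacle your outline omits is that $\lvert\theta^k\rvert^2 y^{k/2}$ is not in $L^2$, so one must first subtract $E_\infty(z,\tfrac{k}{2})+E_0(z,\tfrac{k}{2})$ before expanding spectrally. The terms produced by this regularization, together with residues extracted while continuing the continuous-spectrum integral, are precisely what generate the main-term poles (including the $k=4$ pole at $\tfrac52$ and half of the $k=3$ double pole); the discrete Maass-form spectrum contributes no poles in the relevant region and only enters the error term. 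Finally, the polynomial growth of the continued series in vertical strips cannot be taken for granted: it requires balancing the exponential decay of $\langle\mathcal{V},\mu_j\rangle$ in $t_j$ against the exponential growth of Maass-form Fourier coefficients, plus a careful Stirling analysis of the Mellin--Barnes integrals. Without these ingredients the contour shift in your final step is unjustified.
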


\begin{remark}\label{rem:smooth_coefficients}
  The coefficients $C_3'$, $C_4'$, and $C_k$ ($k \geq 4$) are given by
  \begin{align*}
    C_3' &= \frac{\pi^2}{3\zeta^{(2)}(3)}, \qquad C_4' =\frac{16(9\sqrt{2}-8)\zeta(\frac{1}{2})\zeta(\frac{3}{2})^2\zeta(\frac{5}{2})}{7 \pi^2 \zeta(3)},\\
    C_k  &= \frac{k^2}{24}V_k^2 + \frac{\pi^k \zeta(k-2)}{12\,\Gamma(\frac{k}{2})^2\zeta^{(2)}(k)}\left(1+2^{3-k}\right).
  \end{align*}
The size of the main term in this result matches Jarn\'{i}k's mean square estimate~\eqref{jarnik-mean-square} when $k=3$, but by smoothing we expose an additional main term and a significant separation between the main terms and error term.
  An expression for the constant $C_3$ involves coefficients from the Laurent expansion of an $L$-function, and is harder to state exactly. Numerical approximation suggests that $C_3 \approx 10.6$.

  For $k > 3$, it is possible to reduce the error term to $O_\epsilon(X^{k - 2 + \frac{3-k}{2} + \epsilon})$, although this introduces additional main terms with coefficients that are explicit but hard to compute.
  Due to a line of spectral poles in the Dirichlet series $D(s, P_k \times P_k)$, which we will define below, we believe this result is the best smooth result possible.
\end{remark}

The smoothed second moment in Theorem~\ref{theorem:main_smooth} can be thought of as a discrete Laplace transform.
In~\cite{Ivic2001}, Ivi\'{c} proved that
\begin{equation*}
  \int_0^\infty P_2(t)^2 e^{-t/X} dt = c X^{\frac{3}{2}} - X + O(X^{\frac{2}{3} + \epsilon})
\end{equation*}
for a known constant $c$, which can be thought of as a normal continuous Laplace transform of the lattice point discrepancy in dimension two.
As an application of Theorem~\ref{theorem:main_smooth}, we are able to prove a very strong result concerning the Laplace transform for dimensions $k \geq 3$.

\begin{theorem}\label{theorem:laplace}
For any $\epsilon > 0$, the smoothed second moment of the lattice point discrepancy for dimension $k \geq 3$ is given by
  \begin{align*}
    \int_0^\infty P_k(t)^2 e^{-t/X}dt &= \delta_{[k=3]} C_3' X^{k-1}(\log X + 1 -\gamma) + \delta_{[k=4]} C_4' \Gamma(k-\tfrac{3}{2}) X^{k-\frac{3}{2}} \\
     &\quad +C_k \Gamma(k-1)X^{k-1} - \frac{\Gamma(k-1)\pi^k}{6\Gamma(\frac{k}{2})^2} X^{k-1} + O\left(X^{k-2+\epsilon}\right),
  \end{align*}
  where the constants are the same as in Theorem~\ref{theorem:main_smooth}.
\end{theorem}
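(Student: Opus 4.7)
The plan is to deduce Theorem \ref{theorem:laplace} from Theorem \ref{theorem:main_smooth} by computing the discrepancy
\[
D(X) := \int_0^\infty P_k(t)^2 e^{-t/X}\,dt - \sum_{n=0}^\infty P_k(n)^2 e^{-n/X}
\]
and showing $D(X) = -\tfrac{\pi^k\Gamma(k-1)}{6\Gamma(k/2)^2}X^{k-1} + O(X^{k-2+\epsilon})$. I would partition $\int_0^\infty$ over the unit intervals $[n,n+1)$, on which $S_k(t)=S_k(n)$ is constant, so $P_k(t) = P_k(n) - V_k(t^{k/2}-n^{k/2})$. Expanding the square decomposes $D = D_1 + D_2 + D_3$ into a diagonal $P_k(n)^2$ piece, a cross piece linear in $P_k(n)$, and a pure-volume quadratic piece.

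Two of the pieces are routine. Since $\int_n^{n+1}e^{-t/X}\,dt = X(1-e^{-1/X})e^{-n/X}$ and $X(1-e^{-1/X})-1 = -\tfrac{1}{2X} + O(X^{-2})$, the piece $D_1 = [X(1-e^{-1/X})-1]\sum_n P_k(n)^2 e^{-n/X}$ is $O(X^{k-2+\epsilon})$ by Theorem \ref{theorem:main_smooth}. For $D_3 = V_k^2\sum_n\int_n^{n+1}(t^{k/2}-n^{k/2})^2 e^{-t/X}\,dt$, Taylor expansion $(n+u)^{k/2} - n^{k/2} = \tfrac{k}{2}n^{k/2-1}u + O(n^{k/2-2})$ on each interval, integration of $u^2$, and summation yield
\[
D_3 = \frac{V_k^2 k^2}{12}\Gamma(k-1)X^{k-1} + O(X^{k-2+\epsilon}) = \frac{\pi^k \Gamma(k-1)}{3\Gamma(k/2)^2}X^{k-1} + O(X^{k-2+\epsilon}),
\]
using $V_k^2 k^2 = 4\pi^k/\Gamma(k/2)^2$.

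The main obstacle is the cross piece $D_2 = -2V_k\sum_n P_k(n)\int_n^{n+1}(t^{k/2}-n^{k/2})e^{-t/X}\,dt$, whose dominant contribution after Taylor expansion is $-\tfrac{V_k k}{2}T(X)$ with $T(X) := \sum_n P_k(n) n^{k/2-1}e^{-n/X}$. I would analyse $T(X)$ by Mellin inversion,
\[
T(X) = \frac{1}{2\pi i}\int_{(\sigma)}\Gamma(s)X^s D\!\bigl(s-\tfrac{k}{2}+1,\,P_k\bigr)\,ds,\qquad D(w,P_k) := \sum_{n\geq 1}\frac{P_k(n)}{n^w}.
\]
Writing $P_k(n) = S_k(n) - V_k n^{k/2}$ and applying the Hurwitz/Euler--Maclaurin expansion $\sum_{n\geq m}n^{-w} = \tfrac{m^{1-w}}{w-1} + \tfrac{1}{2}m^{-w} + O(m^{-w-1})$ to the identity $\sum_{n\geq 1}S_k(n)n^{-w} = \zeta(w) + \sum_{m\geq 1}r_k(m)\zeta(w,m)$ gives
\[
D(w,P_k) = \zeta(w) + \frac{\zeta_k(w-1)}{w-1} + \tfrac{1}{2}\zeta_k(w) - V_k\zeta\!\left(w-\tfrac{k}{2}\right) + (\text{analytic on } \Re w > k/2 - 2),
\]
where $\zeta_k(s) = \sum_{m\geq 1}r_k(m)m^{-s}$. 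The pole at $w = k/2+1$ of $\zeta_k(w-1)/(w-1)$ cancels that of $V_k\zeta(w-k/2)$, and the would-be pole at $w = 1$ coming from $\zeta_k(0)/(w-1)$ cancels against $\zeta(w)$ since $\zeta_k(0) = -1$ (a consequence of the theta functional equation). The surviving rightmost pole of $D(w,P_k)$ is at $w = k/2$ with residue $\pi^{k/2}/(2\Gamma(k/2))$, coming from $\tfrac{1}{2}\zeta_k(w)$. Shifting the contour past $s = k-1$ and controlling the remaining integral via polynomial bounds on $D(w,P_k)$ on vertical lines (established in the analysis preceding Theorem \ref{theorem:main_smooth}), I obtain
\[
T(X) = \frac{\pi^{k/2}\Gamma(k-1)}{2\Gamma(k/2)}X^{k-1} + O(X^{k-2+\epsilon}),
\]
and hence $D_2 = -\tfrac{\pi^k\Gamma(k-1)}{2\Gamma(k/2)^2}X^{k-1} + O(X^{k-2+\epsilon})$.

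Summing the three pieces yields $D(X) = -\tfrac{\pi^k\Gamma(k-1)}{6\Gamma(k/2)^2}X^{k-1} + O(X^{k-2+\epsilon})$, which, combined with Theorem \ref{theorem:main_smooth}, gives Theorem \ref{theorem:laplace}. The hard part is pinning down the precise pole structure of $D(w,P_k)$ and its residue at $w = k/2$; once that is in hand, the rest is Taylor expansion and contour bookkeeping.
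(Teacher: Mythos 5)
Your proposal is correct and follows essentially the same route as the paper: the same decomposition of $P_k(t)^2$ over unit intervals into a diagonal piece (controlled by Theorem~\ref{theorem:main_smooth}), a pure-volume piece worth $+\tfrac{\pi^k\Gamma(k-1)}{3\Gamma(k/2)^2}X^{k-1}$, and a cross piece reduced to $\sum_n P_k(n)n^{k/2-1}e^{-n/X}$ and evaluated by Mellin inversion of $D(s,P_k)$, with matching residue $\pi^{k/2}/(2\Gamma(\tfrac{k}{2}))$ at the relevant pole. The only variation is cosmetic: you continue $D(w,P_k)$ via the Hurwitz-zeta/Euler--Maclaurin expansion rather than the paper's Mellin--Barnes integral~\eqref{eq:DsPk_continuation}, and (a point worth recording) for $k=3$ the Taylor-expansion error in the cross piece should be bounded by Cauchy--Schwarz against the smoothed second moment rather than by pointwise bounds on $P_3(n)$.
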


\begin{remark}
As in Theorem~\ref{theorem:main_smooth}, the techniques of this paper can be used to give further secondary terms and reduced error terms in dimensions $k>3$.
\end{remark}

 An application of Perron's formula with another smoothed sum allows us to prove our main result, an analogue of Theorem~\ref{theorem:main_smooth} with a sharp cutoff.

\begin{theorem}\label{theorem:main_sharp}
  For each $k \geq 3$ there exists a $\lambda>0$ such that
  \begin{equation*}
    \sum_{n \leq X} P_k(n)^2 = \delta_{[k=3]}X^{k-1} \left(\frac{C_3'}{2}\log X -\frac{C_3'}{4}\right) + \frac{C_k}{k-1} X^{k-1} +O_\lambda(X^{k-1-\lambda}),
  \end{equation*}
  where $C_3'$ and $C_k$ ($k \geq 3)$ are the same constants as in Theorem~\ref{theorem:main_smooth}.
\end{theorem}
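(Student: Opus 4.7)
The plan is to pass from the exponentially smoothed second moment of Theorem~\ref{theorem:main_smooth} to the sharp cutoff by applying Perron's formula to the Dirichlet series
\[
D(s, P_k \times P_k) := \sum_{n \geq 1} \frac{P_k(n)^2}{n^s},
\]
whose meromorphic continuation and polar structure are developed earlier in the paper. Direct application of truncated Perron would introduce a lossy error term, so the decisive step is to insert a secondary smoothing on a short scale $Y = X^{1-\eta}$, for a parameter $\eta \in (0,1)$ to be optimized at the end.

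Concretely, I would fix smooth majorants and minorants $\psi^{\pm}$ for the indicator of $[0,X]$ that agree with it outside an interval of length $Y$ near the endpoint and whose derivatives are controlled, so that the Mellin transforms $\widetilde{\psi}^{\pm}(s)$ satisfy $|\widetilde{\psi}^{\pm}(s)| \ll X^{\Re(s)}(1 + |s|Y/X)^{-A}$ for any $A \geq 0$. Then
\[
\sum_{n \geq 1} \psi^{\pm}(n) P_k(n)^2 = \frac{1}{2\pi i}\int_{(c)} D(s, P_k \times P_k)\,\widetilde{\psi}^{\pm}(s)\,ds,
\]
and I shift the contour past $s = k-1$, which is a double pole of the integrand for $k = 3$ and a simple pole for $k \geq 4$. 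The Laurent coefficients of $D$ at $s = k-1$ are pinned down by matching against Theorem~\ref{theorem:main_smooth}: for $k = 3$ one has $D(s) = C_3'/(s-2)^2 + C_3/(s-2) + \cdots$, while for $k \geq 4$ only a simple pole of residue $C_k$ appears. A short residue computation using $\Res_{s=k-1}(X^s/s) = X^{k-1}/(k-1)$ and the derivative formula for the double pole then produces exactly the main terms in the statement: the digamma factor $1-\gamma$ that accompanies the exponential smoothing is replaced here by $-1/(k-1)$, giving the coefficient $-C_3'/4$ in dimension three.

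The remaining task is to control two errors. The shifted contour integral is bounded using a polynomial growth estimate for $D(s, P_k \times P_k)$ on some line $\Re(s) = k - 1 - \lambda_1$, together with the rapid decay of $\widetilde{\psi}^{\pm}$, producing an error of size $O(X^{k-1-\lambda_1}(X/Y)^{O(1)})$. The smoothing discrepancy $|\sum_n \psi^{\pm}(n) P_k(n)^2 - \sum_{n \leq X} P_k(n)^2|$ is bounded by the short-interval mean square $\sum_{X - Y < n \leq X + Y} P_k(n)^2$, which is handled by subtracting two applications of Theorem~\ref{theorem:main_smooth} at nearby scales and gives a bound of size $O(Y X^{k-2} + X^{k-2+\epsilon})$. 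Balancing these two errors by choice of $\eta$ furnishes a fixed $\lambda > 0$ as required.

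The principal obstacle is establishing a usable polynomial growth bound for $D(s, P_k \times P_k)$ on the vertical line to which we shift. This Dirichlet series is assembled from shifted convolutions of $r_k$ with itself and inherits a line of spectral poles from the spectral decomposition of an associated Poincaré series, as alluded to in Remark~\ref{rem:smooth_coefficients}, so one cannot shift arbitrarily far to the left. Fortunately, because $\widetilde{\psi}^{\pm}(s)$ decays faster than any polynomial in $|s|$ on the scale $|s| \asymp X/Y$, even significant vertical growth of $D$ is absorbed, and it suffices to shift only a short distance past $s = k-1$ to produce a genuine power saving.
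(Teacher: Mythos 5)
Your overall architecture (Perron/Mellin with a secondary smoothing, contour shift past $s=k-1$, residues giving the stated main terms, polynomial growth of $D(s,P_k\times P_k)$ in vertical strips controlling the shifted integral) matches the paper's proof in \S\ref{sec:sharp_second_momentv2}, and your residue computation at the double pole is correct. The genuine gap is in how you propose to bound the edge contribution $\sum_{X-Y<n\leq X+Y}P_k(n)^2$: you cannot obtain a short-interval mean-square estimate by ``subtracting two applications of Theorem~\ref{theorem:main_smooth} at nearby scales.'' The weight $e^{-n/X}$ is spread over scale $X$, and for $X_1,X_2$ with $|X_1-X_2|\asymp Y$ the difference $e^{-n/X_1}-e^{-n/X_2}$ is a weight of size roughly $(Y/X)\,(n/X)\,e^{-n/X}$ supported on essentially all $n\lesssim X$; in particular it is of size only $O(Y/X)$ on the interval $[X-Y,X+Y]$ and therefore does not majorize its indicator function. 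Differencing thus tells you nothing about the concentration of $\sum P_k(n)^2$ in short intervals, and the bound $O(YX^{k-2}+X^{k-2+\epsilon})$ you assert is precisely the hard statement that needs proof, not a consequence of the smoothed theorem. This is only an issue for $k=3$: for $k\geq 4$ the pointwise bound $P_k(n)^2\ll n^{k-2}\log^{4/3}n$ already controls the edge terms, but for $k=3$ the best pointwise bound $P_3(n)\ll n^{21/32+\epsilon}$ gives $P_3(n)^2\ll n^{21/16+\epsilon}$ with $21/16>1=k-2$, so a genuinely new input is required.

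The paper supplies that input in Lemma~\ref{lem:concentrating_cutoff_boundv2}: one runs the inverse Mellin transform against the concentrating kernel $\exp(\pi s^2/y^2)$, whose inverse transform $V_{X,y}(n)=(2\pi)^{-1}\exp(-y^2\log^2(X/n)/4\pi)$ is bounded below on $|n-X|<X/y$, and uses the explicit polynomial growth exponent $M$ of $D(s,P_k\times P_k)$ to get $\sum_{|n-X|\ll X^{\beta}}P_k(n)^2\ll X^{k-2+\beta}\log X$ with $\beta=1-1/(4(M+1))$. Note this only yields intervals barely shorter than $X$, so your implicit assumption that one may take $Y=X^{1-\eta}$ with the full expected bound $YX^{k-2}$ is stronger than what is actually proved; the final $\lambda$ in the theorem comes from balancing against this weak $\beta$, not from an optimal short-interval estimate. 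To repair your argument, replace the differencing step with a concentrated test function of this type (or an equivalent large-sieve/mean-value input) and track the dependence on $M$ explicitly when balancing the two error terms.
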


Theorem~\ref{theorem:main_sharp} resembles the smoothed result (Theorem~\ref{theorem:main_smooth}) up to constants, although the error bound is worse.
Notice that in dimension $k=3$, Theorem~\ref{theorem:main_sharp} exhibits a second main term and additional power-savings in the error term.

The sum in Theorem~\ref{theorem:main_sharp} is closely related to the mean square results~\eqref{jarnik-mean-square} and~\eqref{jarnik-mean-square-high-dim}.
However, the two results differ in that Jarn\'{i}k considers an integral over $[0,X]$, while we consider a sum of $P_k(n)$ over integral values up to $X$.
For arithmetic applications, we believe that the sum is a more natural object of study than the integral.
But as a corollary to Theorem~\ref{theorem:main_sharp}, we are able to strengthen Jarn\'{i}k and Lau's mean square estimates  given in~\eqref{jarnik-mean-square}.

\begin{theorem}\label{theorem:main_jarnik}
  There exists $\lambda>0$ such that
  \begin{equation*}
    \int_0^X \big(P_3(x)\big)^2 dx = \frac{C_3'}{2} X^2 \log X + \left(\frac{C_3}{2}-\frac{C_3'}{4}-\frac{\pi^2}{3}\right)X^2  + O_\lambda\left(X^{2-\lambda}\right),
  \end{equation*}
  where $C_3'$ and $C_3$ are the same constants as in Theorem~\ref{theorem:main_smooth}.
\end{theorem}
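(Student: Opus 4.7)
The plan is to derive Theorem~\ref{theorem:main_jarnik} from the sharp sum estimate of Theorem~\ref{theorem:main_sharp} by comparing the integral $\int_0^X P_3(t)^2 dt$ to the sum $\sum_{n \leq X} P_3(n)^2$. Since $S_3$ is a step function constant on each interval $[n, n+1)$, we have $P_3(t) = P_3(n) - V_3(t^{3/2} - n^{3/2})$ there. Taylor-expanding in $u = t-n$ gives $t^{3/2} - n^{3/2} = \tfrac{3}{2} u n^{1/2} + \tfrac{3}{8} u^2 n^{-1/2} + O(n^{-3/2})$. Squaring $P_3(t)$ and integrating over $[n, n+1)$ decomposes the contribution into three pieces: the squared term $P_3(n)^2$, a cross term proportional to $P_3(n) n^{1/2}$, and a ``volume'' term $\tfrac{3V_3^2}{4} n$ plus smaller errors. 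Summing over $n \leq X$ and using $V_3 = 4\pi/3$, the volume term contributes $\tfrac{3V_3^2}{8} X^2 = \tfrac{2\pi^2}{3} X^2$.

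Substituting Theorem~\ref{theorem:main_sharp} for the first piece, the remaining task is to evaluate the cross term $\sum_{n \leq X} P_3(n) n^{1/2}$. By Abel summation this reduces to an asymptotic for $M(X) := \sum_{n \leq X} P_3(n)$. Combining the step-function identity $\int_0^N S_3(t) dt = \sum_{n < N} S_3(n)$ with Euler--Maclaurin applied to $V_3 \sum_{n \leq X} n^{3/2}$ yields $M(X) = \tfrac{V_3}{2} X^{3/2} + \int_0^X P_3(t)\, dt + O(X^{1/2})$. Given an auxiliary bound $\int_0^X P_3(t)\, dt = O(X^{3/2 - \delta})$ for some $\delta > 0$, Abel summation produces $\sum_{n \leq X} P_3(n) n^{1/2} = \tfrac{3V_3}{8} X^2 + O(X^{2-\delta'}) = \tfrac{\pi}{2} X^2 + O(X^{2-\delta'})$. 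Hence the cross term contributes $-\tfrac{3V_3}{2} \cdot \tfrac{\pi}{2} X^2 = -\pi^2 X^2$, which together with the volume term's $+\tfrac{2\pi^2}{3} X^2$ produces exactly the shift $-\tfrac{\pi^2}{3} X^2$ relative to Theorem~\ref{theorem:main_sharp}, matching the statement.

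The main obstacle is the auxiliary power-savings bound $\int_0^X P_3(t)\, dt = O(X^{3/2 - \delta})$; the trivial pointwise bound $P_3(t) = O(t^{21/32+\epsilon})$ only gives $O(X^{53/32+\epsilon})$ for the integral, which is too weak. This estimate should follow from a Voronoi-style expansion of $P_3$ into an oscillating series whose termwise integration over $[0, X]$ gains a factor of $X^{1/2}$ by stationary phase; alternatively, it may be extracted from Mellin-Perron analysis of the relevant Dirichlet series, parallel to the methods used for Theorems~\ref{theorem:main_smooth} and~\ref{theorem:laplace}. All remaining ingredients---bounding the higher-order Taylor remainders, controlling the minor sum $\sum_{n \leq X} P_3(n) n^{-1/2}$, and handling the boundary interval $[\lfloor X \rfloor, X]$---are routine applications of Abel summation combined with the Heath-Brown pointwise bound on $P_3$.
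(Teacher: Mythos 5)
Your decomposition of $\int_0^X P_3(t)^2\,dt - \sum_{n\le X}P_3(n)^2$ into a ``volume'' piece and a cross term $-\tfrac{3V_3}{2}\sum_{n\le X}P_3(n)\sqrt{n}$ (plus lower-order errors) is exactly the paper's decomposition, and the arithmetic check that the volume piece contributes $\tfrac{2\pi^2}{3}X^2$ while the cross term contributes $-\pi^2 X^2$, for a net shift of $-\tfrac{\pi^2}{3}X^2$, is correct. The difference --- and the problem --- lies entirely in how the cross term $\sum_{n\le X}P_3(n)\sqrt{n}$ is evaluated. The paper attacks this sum directly with Perron's formula applied to $D(s,P_3)$, using the decomposition~\eqref{eq:DsPk_continuation} together with explicit polynomial growth estimates for the Mellin--Barnes integral and the relevant convexity bounds for $\zeta$ and $L(\cdot,\theta^3)$ to bound the shifted contour; the residue at $s=2$ gives $\tfrac{\pi}{2}X^2$ and everything else is power-saving. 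This is self-contained given the analytic machinery already built up in the paper. You instead push the sum through Abel summation to $M(X)=\sum_{n\le X}P_3(n)$, then through the step-function identity to $\int_0^X P_3(t)\,dt$, and finally invoke the bound $\int_0^X P_3(t)\,dt = O(X^{3/2-\delta})$.

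That last bound is the genuine gap. You flag it yourself as ``the main obstacle'' and assert that it ``should follow from a Voronoi-style expansion'' or from ``Mellin--Perron analysis,'' but you do not prove it, and it is not among the results established in the paper. It is a nontrivial statement: the trivial pointwise bound gives only $O(X^{53/32+\epsilon})$, as you note, so proving it requires exhibiting cancellation --- either through a Hardy--Landau/Voronoi explicit formula for $P_3$ and a stationary-phase argument, or through a Perron analysis of $D(s,P_3)$ not unlike the one the paper actually carries out. In other words, your route does not avoid the analytic work; it relocates it into an unproved auxiliary lemma, and the lemma you would need is of roughly the same difficulty as the sum evaluation the paper does directly. (There is also a minor error in the intermediate step: the boundary contributions to $M(X) - \tfrac{V_3}{2}X^{3/2} - \int_0^X P_3(t)\,dt$ involve $P_3(X)$ and $\int_X^{X+1}P_3$, so the error is $O(X^{21/32+\epsilon})$, not $O(X^{1/2})$; this does not affect the outcome since $21/32<1$, but the stated error term is not quite right.) To complete your argument you would need to either (i) prove the auxiliary integral bound, for instance via the explicit formula for $P_3$, or (ii) replace the Abel-summation reduction with the paper's direct Perron evaluation of $\sum_{n\le X}P_3(n)\sqrt{n}$ using $D(s,P_3)$ and its growth in vertical strips.
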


\section*{Acknowledgements}

We would like to thank Jeff Hoffstein, Sam Chow, Frank Thorne, and Werner Georg
Nowak for their helpful discussions and kind remarks.
We would also like to thank Aleksandar Ivi\'{c} for suggesting that we also
consider the Laplace transform, ultimately leading to our
Theorem~\ref{theorem:laplace}.

With gratitude, the first author acknowledges support from previous employment, during which time this paper was written and revised, at Colby College in Waterville, Maine and Morgan State University in Baltimore, Maryland.

The third author gratefully acknowledges support from EPSRC Programme Grant
EP/K034383/1 LMF:\ L-Functions and Modular Forms.

We would also like to thank the National Science Foundation.
The third author was partially supported by the National Science Foundation
Graduate Research Fellowship Program under Grant No. DGE 0228243.
The third and fourth authors were partially supported by the National
Science Foundation under Grant No. DMS-1440140 while in residence at the
Mathematical Sciences Research Institute in Berkeley, California, during the
Spring 2017 semester.

\section*{Description of Methodology and Outline of Paper}

We approach this problem by understanding the analytic properties of the Dirichlet series associated to $S_k(n)^2$ and $P_k(n)^2$, defined by
\[
  D(s,S_k \times S_k) = \sum_{n=1}^\infty \frac{S_k(n)^2}{n^{s+k}}, \qquad D(s,P_k \times P_k) = \sum_{n=1}^\infty \frac{P_k(n)^2}{n^{s+k-2}}.
\]
Note that the $k$ and $k-2$ in the exponents serve to normalize the Dirichlet series to converge absolutely for $\Re s > 1$, based on known mean square results.
These two Dirichlet series are closely related to the series studied by the authors in~\cite{HulseKuanLowryDudaWalker17, HKLDWshortsums}, in which meromorphic continuations were given and studied for the Dirichlet series
\begin{equation*}
  \sum_{n \geq 1} \frac{S_f(n)^2}{n^s},
\end{equation*}
where $S_f(n) = \sum_{m \leq n} a(m)$ are partial sums of the coefficients of a $\mathrm{GL}(2)$ cusp form $f(z) = \sum a(n) e(nz)$. Indeed, the techniques and analysis in this paper build on the methodology introduced to study the cusp form case.

In \S\ref{sec:decomposition}, we show that the meromorphic properties of $D(s, P_k \times P_k)$ can be understood from the properties of $D(s, S_k \times S_k)$, and vice versa.
We then decompose $D(s, S_k \times S_k)$ into \emph{diagonal} and \emph{off-diagonal} pieces.
In \S\ref{sec:mero} and \S\ref{sec:Wk_analytic} we prove that the pieces of $D(s, S_k \times S_k)$ have meromorphic continuations to the complex plane.
This analysis culminates in Theorem~\ref{theorem:DsPkPk},  which states that $D(s, S_k \times S_k)$ and $D(s, P_k \times P_k)$ have meromorphic continuation to the plane.

As in~\cite{HulseKuanLowryDudaWalker17}, the central challenge is determining the analytic behavior of the \emph{off-diagonal}, which involves the shifted convolution sum
\begin{equation*}
  Z_k(s,w) = \sum_{h \geq 1} \sum_{n \geq 0} \frac{r_k(n+h)r_k(n)}{(n+h)^{s + \frac{k}{2} - 1} h^w}.
\end{equation*}
Heuristically, this multiple Dirichlet series can be obtained from a Petersson inner product,
\begin{equation*}
  \big \langle \lvert \theta^k \rvert^2 \Im(\cdot)^{\frac{k}{2}}, P_h(\cdot, \overline{s})\big \rangle,
\end{equation*}
where $P_h(z,s)$ is a Poincar\'e series and $\theta(z) = \sum_{n \in \mathbb{Z}} e^{2\pi i n^2 z}$ is the standard theta function.
In contrast to the cusp form case, however, $\theta(z)$ has moderate growth, complicating the spectral analysis of the inner product.
Thus it is necessary to modify $\lvert \theta^k \rvert^2$ to remove this growth.
In~\S\ref{sec:meromorphic_Zk} we subtract appropriate linear combinations of Eisenstein series evaluated at specific values such that the resulting function is square-integrable.

With this modification, in \S\ref{sec:secondmoment} we are able to use an inverse Mellin transform to extract information out of the meromorphic properties of $D(s, S_k \times S_k)$ and to prove the asymptotic behavior for the smoothed sum in Theorem~\ref{theorem:main_smooth}.
In particular, we are able to show that $D(s, S_k \times S_k)$ has polynomial growth in vertical strips.

Similar techniques are used to produce a sharp second moment in \S\ref{sec:sharp_second_momentv2}.
This is achieved by proving a weak short-interval estimate and using a Perron integral.

In \S\ref{sec:laplace}, we apply Theorem~\ref{theorem:main_smooth} to prove Theorem~\ref{theorem:laplace}, our estimate for the Laplace transform of $P_k(t)^2$.  The sum in Theorem~\ref{theorem:main_smooth} can be considered as an integral of a step function, and we study the difference between this integral and the continuous Laplace transform.

We apply similar techniques in \S\ref{sec:super_jarnik} to prove our final result, a refinement of Jarn\'{i}k's dimension three mean square result~\eqref{jarnik-mean-square}.
Known bounds for $P_3(n)$ quickly reduce our study to bounds for the cross term
\[
  \sum_{n \leq X} P_3(n) n^\frac{1}{2}.
\]
We extract a main term and power-savings error for this sum using the meromorphic properties of the Dirichlet series with coefficients $P_3(n)$ and an integral transform.

\section*{Directions for Further Research}

As presented here, Theorems~\ref{theorem:main_sharp}
and~\ref{theorem:main_jarnik} show that there are two main terms and a
power-saving error term in dimension three mean square estimates, but
we do not state the size of the power-savings in the error.
In forthcoming work, the authors will analyze the growth properties of the Dirichlet series $D(s, S_k \times S_k)$ and $D(s, P_k \times P_k)$ and identify the size of the power-savings.
In close analogy to~\cite{HKLDWshortsums}, the analysis is delicate and the largest obstacle is obtaining a nuanced understanding of the growth properties of the Petersson inner product $\langle \lvert \theta \rvert^{2k} y^{\frac{k}{2}}, \mu_j \rangle$ for Maass forms $\mu_j$.
Heuristically, the authors believe that a careful analysis based on the methods of this paper would lead to $\lambda = \frac{1}{5} - \epsilon$ in Theorem~\ref{theorem:main_sharp} (in dimension $k=3$) and Theorem~\ref{theorem:main_jarnik}, for any $\epsilon >0$.
Improved techniques for handling the contributions from Maass forms would lead to better bounds.
It is not clear what the optimal error bound should be.

The methodology used to prove Theorem~\ref{theorem:main_sharp} focused on the
dimension three case, as this is the least understood.  It may be possible to
use the meromorphic properties of $D(s, S_k \times S_k)$ for $k \geq 4$ to
prove improved estimates for higher dimensions as well.  This is especially
interesting in dimension four, as the smooth second moment in
Theorem~\ref{theorem:main_smooth} suggests the existence of a second main term
in the sharp second moment of $P_4(n)$ which we have not been able to verify.

It is possible to modify the techniques of this paper to approach the
classical Gauss circle problem in two dimensions, or to understand the lattice
point discrepancy problem for general ellipsoids.
Studying the meromorphic properties of $D(s, P_2 \times P_2)$ using the
methodology of this paper should give new insight on the Gauss circle problem.
The authors examine $D(s, P_2 \times P_2)$ and how it differs from the
Dirichlet series associated to the Gauss circle problems in higher dimensions
in the forthcoming paper~\cite{HKKL17}.

\section{Decomposition of $D(s,S_k \times S_k)$}\label{sec:decomposition}

Note that $P_k(n)^2$ and $S_k(n)^2$ are related by the formula
\begin{equation}\label{eq:Pksquared_equals_Sksquared}
  P_k(n)^2 = S_k(n)^2 - 2 V_k n^\frac{k}{2} S_k(n)+V_k^2 n^{k}.
\end{equation}
This relationship induces a relationship between $D(s, P_k \times P_k)$ and $D(s, S_k \times S_k)$, described explicitly in the following proposition.

\ \

\begin{proposition}~\label{prop:DsPkSquared_vs_DsSkSquared}
  The Dirichlet series $D(s, P_k \times P_k)$ is related to $D(s, S_k \times S_k)$ through the equality
  \begin{align} \label{eq:DsPkSquared_equals_DsSkSquared}
    D(s,P_k \times P_k) &= D(s-2,S_k \times S_k) + V_k^2 \zeta(s-2)  \notag\\
      &\quad - 2V_k \zeta(s+\tfrac{k}{2}-2) -2V_k L(s-1,\theta^k)  \\
      &\quad + \frac{i V_k}{\pi} \int_{(\sigma)} L(s-1-z,\theta^k)\zeta(z) \frac{\Gamma(z)\Gamma(s+\frac{k}{2}-2-z)}{\Gamma(s+\frac{k}{2}-2)}\,dz, \notag
  \end{align}
when $\sigma > 1$ and $\Re s>\sigma$,  where $L(s, \theta^k)$ is the normalized $L$-function
  \begin{equation*}
    L(s, \theta^k) := \sum_{n \geq 1} \frac{r_k(n)}{n^{s + \frac{k}{2} - 1}}
  \end{equation*}
  associated to the $k$-th power of the theta function $\theta(z) = \sum_{n \in \mathbb{Z}} e^{2\pi i n^2 z}$.
\end{proposition}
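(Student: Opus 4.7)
The plan is to start from the algebraic identity \eqref{eq:Pksquared_equals_Sksquared}, divide by $n^{s+k-2}$, and sum over $n \ge 1$. Two of the three resulting Dirichlet series fall out immediately: the $S_k(n)^2$ contribution is
$\sum_n S_k(n)^2/n^{s+k-2} = D(s-2, S_k \times S_k)$ (by matching the normalization, since $s+k-2 = (s-2)+k$), and the $V_k^2 n^k$ contribution is $V_k^2 \zeta(s-2)$. All the work is concentrated in the cross term, which reduces, after cancelling $n^{k/2}$, to evaluating
\[
\sum_{n \ge 1}\frac{S_k(n)}{n^{w}}, \qquad w := s + \tfrac{k}{2} - 2.
\]

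To evaluate this I would substitute $S_k(n) = \sum_{0 \le m \le n} r_k(m)$ and swap the order of summation, stratifying by the relation between $m$ and $n$. The $m=0$ slice gives $r_k(0)\zeta(w) = \zeta(s + \tfrac{k}{2} - 2)$. The diagonal $m = n$ contributes $\sum_{m \ge 1} r_k(m)/m^{w}$, which is exactly $L(s-1, \theta^k)$ given the normalization in the statement. The strictly off-diagonal part is
\[
\sum_{m \ge 1} r_k(m) \sum_{h \ge 1} \frac{1}{(m+h)^{w}},
\]
and to put this in closed form I would apply the standard Mellin--Barnes representation
\[
\frac{1}{(m+h)^{w}} = \frac{1}{2\pi i}\int_{(\sigma)} \frac{\Gamma(z)\Gamma(w-z)}{\Gamma(w)}\, m^{z-w}\, h^{-z}\, dz,
\]
valid for $0 < \sigma < \Re w$. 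Pulling the contour integral outside both sums identifies the $h$-sum as $\zeta(z)$ and the $m$-sum as $\widetilde{L}(w-z, \theta^k) = L(s-1-z, \theta^k)$. Finally, multiplying the whole result by $-2V_k$ and using the elementary identity $-2/(2\pi i) = i/\pi$ reproduces the prefactor $iV_k/\pi$ of the Mellin--Barnes integral in the proposition.

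The hard part is not the computation itself but the bookkeeping of the region of validity. One must choose $\sigma$ and an initial region of $s$ in which four things hold simultaneously: $\zeta(z)$ converges absolutely on $\Re z = \sigma$, $L(s-1-z,\theta^k)$ converges absolutely on that line, the Mellin--Barnes identity is valid (i.e.\ $0 < \sigma < \Re w$), and Fubini applies to justify the interchange of integral and summation. Absolute convergence in vertical strips is controlled by the exponential decay of $|\Gamma(z)\Gamma(w-z)|$. Once the identity is established in such an initial region, it extends by meromorphic continuation to the stated range $\sigma > 1$, $\Re s > \sigma$, where the integrand makes sense as a meromorphic function and defines the analytic continuation implicit in the statement.
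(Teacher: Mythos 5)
Your proposal is correct and follows essentially the same route as the paper's proof: divide \eqref{eq:Pksquared_equals_Sksquared} by $n^{s+k-2}$ and sum, identify the $S_k(n)^2$ and $V_k^2 n^k$ contributions directly, decompose $S_k(n)$ over $m=0$, $m=n$, and $0<m<n$, and decouple the resulting shifted sum with the same Mellin--Barnes representation. The only cosmetic difference is that you cancel $n^{k/2}$ up front to work with a single exponent $w = s+\tfrac{k}{2}-2$, whereas the paper carries the two exponents along; the computation and region-of-validity bookkeeping are identical.
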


Here and throughout this paper, we use the common notation
\begin{equation*}
  \frac{1}{2\pi i} \int_{(\sigma)} f(z) \, dz = \frac{1}{2\pi} \int_{-\infty}^\infty f(\sigma + it) \, dt.
\end{equation*}

\begin{proof}

  We begin with~\eqref{eq:Pksquared_equals_Sksquared}, divide each term by $n^{s + k - 2}$, and sum over $n \geq 1$.
  The left-hand side and first term on the right-hand side are immediate from the definitions of $D(s, P_k \times P_k)$ and $D(s-2, S_k \times S_k)$, respectively.
  Similarly, the third term on the right-hand side is immediately recognizable as $V_k^2 \zeta(s-2)$.

  For the second term, note that
  \begin{equation*}
    S_k(n) = \sum_{m = 0}^n r_k(m) = 1 + r_k(n) + \sum_{m = 1}^{n-1} r_k(m).
  \end{equation*}
  Multiplying by $n^{\frac{k}{2}}$, dividing by $n^{s+k-2}$, and summing over $n \geq 1$ yields
  \begin{equation*}
    \zeta(s+\tfrac{k}{2}-2) + \sum_{n \geq 1} \frac{r_k(n)}{n^{s + \frac{k}{2} - 2}} + \sum_{\substack{n \geq 1 \\ 0 < m < n}} \frac{r_k(m)}{n^{s + \frac{k}{2} - 2}}.
  \end{equation*}
  Swapping the order of summation in the final sum and writing $n = m + h$ shows that
  \begin{equation} \label{eq:Skseries}
    \sum_{n=1}^\infty \frac{S_k(n)}{n^{s+\frac{k}{2} - 2}}
    = \zeta(s + \tfrac{k}{2} - 2) + L(s-1, \theta^k) + \sum_{m,h \geq 1} \frac{r_k(m)}{(h+m)^{s + \frac{k}{2} - 2}}.
  \end{equation}
  We decouple $m$ and $h$ in the last sum with the identity
  \begin{equation}\label{eq:MellinBarnesIdentity}
    \frac{1}{(m+h)^s} = \frac{1}{2\pi i} \int_{(\sigma)} \frac{1}{m^{s-z} h^z} \frac{\Gamma(z) \Gamma(s-z)}{\Gamma(s)} dz, \quad (\sigma > 0, \Re s >\sigma)
  \end{equation}
  which follows from the Barnes integral 6.422(3) of~\cite{GradshteynRyzhik07}.
  For $\sigma > 1$, the $h$ sum now converges absolutely and can be collected into a single $\zeta(z)$, and for $\Re s$ sufficiently large the $m$ sum can be collected into $L(s-1-z, \theta^k)$.
  Multiplication by $-2V_k$ identifies this with the second term in~\eqref{eq:Pksquared_equals_Sksquared}, and simplification completes the proof.
\end{proof}

Through~\eqref{eq:DsPkSquared_equals_DsSkSquared} it is possible to pass analytic information from $D(s,S_k \times S_k)$ to $D(s,P_k \times P_k)$, and vice versa.
To understand the meromorphic continuation of $D(s, S_k \times S_k)$, we first decompose the Dirichlet series $D(s, S_k \times S_k)$ into a sum of simpler functions.
Our methodology is a variant of the methodology used in the proof of Proposition~3.1 in~\cite{HulseKuanLowryDudaWalker17} and builds on the proof of the previous proposition, albeit with the added wrinkle of including shifted sums in the decomposition.

\begin{proposition}\label{prop:dirichlet}
  The Dirichlet series associated to $S_k(n)^2$ decomposes into
  \begin{equation}\label{s_basic_decomp}
    \begin{split}
      D(s,S_k \times S_k) &= \zeta(s+k) + W_k(s) \\
      &\quad + \frac{1}{2\pi i} \int_{(\sigma)} W_k(s-z) \zeta(z) \frac{\Gamma(z)\Gamma(s+k-z)}{\Gamma(s+k)}dz
    \end{split}
  \end{equation}
  for $\Re s > 2$ and $1 < \sigma < \Re (s - 1)$, in which
  \begin{align*}
    W_k(s) &= \sum_{n=1}^\infty \frac{r_k(n)^2}{n^{s+k}} + 2 Z_k(s+\tfrac{k}{2}+1,0), \\
    Z_k(s,w) &= \sum_{h\geq 1}\sum_{n \geq 0} \frac{r_k(n+h)r_k(n)}{(n+h)^{s+\frac{k}{2}-1} h^w}.
  \end{align*}
  Here $Z_k(s,w)$ converges locally normally for $\Re s > 1+\frac{k}{2}$ and $\Re w \geq 0$.
\end{proposition}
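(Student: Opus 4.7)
The plan is to expand $S_k(n)^2$ combinatorially, rearrange the resulting double Dirichlet series, and apply the same Mellin--Barnes identity used in the proof of Proposition~\ref{prop:DsPkSquared_vs_DsSkSquared} to recover the stated decomposition. Concretely, since $S_k(n) = \sum_{0 \leq m \leq n} r_k(m)$ and $r_k(0) = 1$, I would first write
\begin{equation*}
  S_k(n)^2 = \sum_{0 \leq m_1, m_2 \leq n} r_k(m_1) r_k(m_2) = 1 + \sum_{m=1}^n r_k(m)^2 + 2\sum_{m_2 = 1}^n \sum_{m_1=0}^{m_2 - 1} r_k(m_1) r_k(m_2),
\end{equation*}
splitting into the $(0,0)$ term, the diagonal $m_1 = m_2 \geq 1$, and the off-diagonal $m_1 < m_2$ part (doubled by symmetry). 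Dividing by $n^{s+k}$ and summing over $n \geq 1$, the $(0,0)$ contribution is immediately $\zeta(s+k)$.

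Next, I would swap orders of summation in each of the remaining two pieces. In both cases the inner $n$-sum has the form $\sum_{n \geq M} n^{-s-k}$, where $M$ equals $m$ in the diagonal case and $m_2$ in the off-diagonal case. Splitting this tail as $M^{-s-k} + \sum_{h \geq 1} (M+h)^{-s-k}$ separates a "zero shift" part from a "positive shift" part. The zero-shift part from the diagonal sum is $\sum_{m \geq 1} r_k(m)^2 / m^{s+k}$; reindexing the off-diagonal zero-shift part via $n = m_1$, $h = m_2 - m_1$ gives exactly $2Z_k(s+\tfrac{k}{2}+1,0)$. Together these produce the summand $W_k(s)$ of the proposition.

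For the positive-shift pieces, I would apply the Mellin--Barnes identity~\eqref{eq:MellinBarnesIdentity},
\begin{equation*}
  \frac{1}{(M+h)^{s+k}} = \frac{1}{2\pi i}\int_{(\sigma)} \frac{1}{M^{s+k-z} h^z} \frac{\Gamma(z)\Gamma(s+k-z)}{\Gamma(s+k)}\,dz,
\end{equation*}
valid for $0 < \sigma < \Re(s+k)$. Taking $\sigma > 1$ so that the $h$-sum collapses to $\zeta(z)$ and interchanging the $z$-integral with the $(m_1,m_2)$-sums leaves precisely $\sum_{m \geq 1} r_k(m)^2/m^{s+k-z} + 2\sum_{m_2 > m_1 \geq 0} r_k(m_1)r_k(m_2)/m_2^{s+k-z}$, which is $W_k(s-z)$ by the same reindexing as before. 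This yields the claimed integral term.

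The routine but only genuinely delicate step is justifying the interchange of the $(m_1,m_2)$-sums and the contour integral. The integrand has size $m^{-\Re s - k + \sigma} h^{-\sigma}$ times rapidly decaying Gamma factors on the line $\Re z = \sigma$, so absolute convergence of the Dirichlet-series factor requires both $\sigma > 1$ (for the $h$-sum) and $\Re s - \sigma > 1$ (using the mean-square bound $\sum_{m \leq N} r_k(m)^2 = O(N^{k-1})$ to force $\sum r_k(m)^2/m^{\Re s + k - \sigma} < \infty$), giving the range $\Re s > 2$ and $1 < \sigma < \Re(s-1)$ stated in the proposition. Finally, the asserted domain of convergence for $Z_k(s,w)$ follows from the bound $\sum_{h=1}^N r_k(N-h)/h^{\Re w} \leq S_k(N-1) = O(N^{k/2})$ for $\Re w \geq 0$, which reduces convergence of $Z_k(s,w)$ to that of $\sum r_k(N)/N^{s-1}$, absolutely convergent for $\Re s > 1 + \tfrac{k}{2}$.
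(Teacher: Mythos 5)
Your proof is correct and follows essentially the same route as the paper's: the same splitting of $S_k(n)^2$ into the $(0,0)$ term, diagonal, and off-diagonal pieces, the same separation of the unshifted terms from the shifted tail to produce $W_k(s)$, and the same Mellin--Barnes decoupling yielding $\zeta(z)$ and $W_k(s-z)$. The only cosmetic differences are that you organize the split at the level of the tail sum $\sum_{n \geq M}$ rather than separating the $m=n$ terms at the outset, and that your convergence bound for $Z_k(s,w)$ handles general $\Re w \geq 0$ directly rather than reducing to $w=0$ by positivity as the paper does.
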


\begin{proof}
We may write
\begin{align*}
  S_k(n)^2 &= \sum_{m \leq n} \sum_{\ell\leq n} r_k(m) r_k(\ell) = \sum_{m \leq n} r_k(m)^2 + 2\!\! \sum_{\ell < m \leq n} r_k(m) r_k(\ell) \\
  &= 1+r_k(n)^2 + \!\!\! \sum_{0<m<n} \! r_k(m)^2 +2 \! \sum_{m< n} r_k(m)r_k(n) + 2\!\!\!\sum_{\ell< m <n} \!\! r_k(m)r_k(\ell).
\end{align*}
In the second line, we separated out the terms in which $m=n$.

Dividing by $n^{s+k}$ and summing over $n \geq 1$ gives
\begin{align*}
  D(s, S_k \times S_k) = \sum_{n  = 1}^\infty \frac{1}{n^{s+k}}
  &+ \bigg( \sum_{n = 1}^\infty \frac{r_k(n)^2}{n^{s+k}} + 2  \sum_{\substack{n \geq 1 \\ m < n}} \frac{r_k(m)r_k(n)}{n^{s + k}} \bigg) \\
  &+ \bigg( \sum_{\substack{n \geq 1 \\ 0 < m < n}} \frac{r_k(m)^2}{n^{s + k}} + 2 \sum_{\substack{n \geq 1 \\ \ell < m < n}} \frac{r_k(m)r_k(\ell)}{n^{s+k}}\bigg).
\end{align*}
We recognize the first term as a zeta function.
The second and third terms represent the diagonal and off-diagonal (resp.) parts of a double summation, and we analyze them together.
Swapping the order of summation and writing $n = m + h$ allows us to write the third term as
\begin{equation*}
  2  \sum_{\substack{n \geq 1 \\ m < n}} \frac{r_k(m)r_k(n)}{n^{s + k}} = 2 \sum_{\substack{m \geq 0 \\h\geq 1}} \frac{r_k(m+h)r_k(m)}{(m+h)^{s+k}}.
\end{equation*}
We now recognize the second and third terms as $W_k(s)$.

The fourth and fifth terms are also closely related.
Writing $n = m+h$ and swapping the order of summation
allows us to write
\begin{equation*}
  \sum_{\substack{n \geq 1 \\ 0 < m < n}} \frac{r_k(m)^2}{n^{s+k}} + 2 \sum_{\substack{n \geq 1 \\ \ell < m < n}} \frac{r_k(m)r_k(\ell)}{n^{s+k}}
  = \sum_{\substack{h \geq 1 \\ m \geq 1}} \frac{r_k(m)^2}{(m+h)^{s+k}} + \sum_{\substack{h \geq 1 \\ m \geq 1 \\ \ell < m}} \frac{r_k(m) r_k(\ell)}{(m + h)^{s+k}}.
\end{equation*}
Notice that this pair of sums is exactly the same as the pair of sums in $W_k(s)$, except that the denominators are shifted by $h$ and there is an additional $h$ sum.
We decouple the $h$ from $m$ by using the Barnes integral identity~\eqref{eq:MellinBarnesIdentity} again.
For $\sigma > 1$, the $h$ sum converges absolutely and can be collected into a zeta function.
Simplification completes the proof of~\eqref{s_basic_decomp}.

To see that $Z_k(s,w)$ converges locally normally in the range specified, it
suffices to show that
\[
  Z_k(s,0) = \sum_{h \geq 1} \sum_{n \geq 0}
  \frac{r_k(n+h)r_k(n)}{(n+h)^{s+\frac{k}{2}-1}} = \sum_{m \geq 1}
  \frac{r_k(m)}{m^{s+\frac{k}{2}-1}} \sum_{\ell < m} r_k(\ell)
\]
which converges absolutely for $\Re s > 1+\frac{k}{2}$, following from the estimate $S_k(m) = O(m^\frac{k}{2})$ and
absolute convergence of $L(s,\theta^k)$ in $\Re s > 1$. Indeed, by positivity we have that $Z_k(\sigma_1,0)>Z_k(\sigma_2,0)$ when $\sigma_2>\sigma_1>1+\frac{k}{2}$, and that $Z_k(\Re s,0)\geq |Z_k(s,0)|$ and so we also have local normal convergence of $Z_k(s,0)$ for $\Re s > 1+\frac{k}{2}$.
\end{proof}

\section{Meromorphic Continuation of $Z_k(s,w)$}\label{sec:meromorphic_Zk}

In this section we follow a construction method analogous to that in~\cite{HoffsteinHulse13, HulseKuanLowryDudaWalker17}, and we adapt the notation there.
We seek to understand
\[
  Z_k(s,w) =\sum_{h\geq 1}\sum_{m \geq 0} \frac{r_k(m+h)r_k(m)}{(m+h)^{s+\frac{k}{2}-1} h^w}
\]
by first fixing a single $h$ and recognizing the remaining sum over $m$ as a Petersson inner product of Poincar\'e series with an appropriate modular form, namely
\begin{align} \label{eq:InnerProduct-Ph-theta}
  \big\langle \lvert \theta^k(\cdot) \rvert^2 \Im(\cdot)^{\frac{k}{2}}, P_h(\cdot,\overline{s})\big\rangle = \int_{\Gamma_0(4) \backslash \mathcal{H}} \lvert \theta^k(z) \rvert^2 \Im(z)^{\frac{k}{2}} \overline{P_h(z, \overline{s})} \; d\mu(z),
\end{align}
in which $d\mu(z)=dxdy/y^2$ and $P_h(z, s)$ is the Poincar\'e series
\[
  P_h(z,s) = \sum_{\gamma \in \Gamma_\infty \backslash \Gamma_0(4)} \Im(\gamma z)^s e^{2\pi i h \gamma z}.
\]
By expanding the inner product~\eqref{eq:InnerProduct-Ph-theta}, we get
\begin{equation*}
  \big\langle \lvert \theta^k(\cdot) \rvert^2 \Im(\cdot)^{\frac{k}{2}}, P_h(\cdot,\overline{s})\big\rangle =  \frac{\Gamma(s+\tfrac{k}{2}-1)} {(4\pi)^{s+\frac{k}{2}-1}} D_k(s; h),
\end{equation*}
where we define
\begin{equation} \label{eq:Dksh_definition}
  D_k(s;h) = \sum_{m=0}^\infty \frac{r_k(m+h)r_k(m)}{(m+h)^{s+\frac{k}{2}-1}}
\end{equation}
for $\Re s$ sufficiently large.
Dividing by $h^w$ and summing over $h \geq 1$ recovers $Z_k(s,w)$,
\begin{equation*}
  Z_k(s,w) = \sum_{h \geq 1} \frac{D_k(s; h)}{h^w} =\frac{(4\pi)^{s+\frac{k}{2}-1}}{\Gamma(s+\tfrac{k}{2}-1)} \sum_{h \geq 1} \frac{\big\langle \lvert \theta^k(\cdot) \rvert^2 \Im(\cdot)^{\frac{k}{2}}, P_h(\cdot,\overline{s})\big\rangle}{h^w}.
\end{equation*}

We would like to understand $Z_k(s,w)$ by expressing $\langle \lvert \theta^k \rvert^2 \Im^{\frac{k}{2}}, P_h \rangle$ in a different way, by replacing $P_h$ with its spectral expansion.
However, this is complicated by the fact that $\lvert \theta^k(z) \rvert^2 \Im(z)^{\frac{k}{2}}$ is not in $L^2(\Gamma_0(4) \backslash \mathcal{H})$, so it is necessary to modify $\lvert \theta^k(z) \rvert^2 \Im(z)^{\frac{k}{2}}$ to be square integrable.
We accomplish this by subtracting Eisenstein series associated to the cusps of $\Gamma_0(4)$, chosen to cancel the polynomial growth of $\lvert \theta^k(z) \rvert^2 \Im(z)^{\frac{k}{2}}$.

\subsection{Modifying $\lvert \theta^k(z) \rvert^2 \Im(z)^{\frac{k}{2}}$ to be square integrable}

Let $E_\mathfrak{a}(z,s)$ denote the Eisenstein series attached to the cusp $\mathfrak{a}$ for the group $\Gamma_0(4)$, given by
\[
  E_\mathfrak{a}(z,s) = \sum_{\gamma \in \Gamma_\mathfrak{a} \backslash \Gamma_0(4)} \Im(\sigma_\mathfrak{a}^{-1}\gamma z)^s,
\]
where $\Gamma_\mathfrak{a} \subset \Gamma_0(4)$ is the stabilizer of the cusp $\mathfrak{a}$, and $\sigma_\mathfrak{a} \in \mathrm{PSL}_2(\R)$ satisfies $\sigma_\mathfrak{a} \infty = \mathfrak{a}$ and induces an isomorphism $\Gamma_\mathfrak{a} \cong \Gamma_\infty$ via conjugation.
These Eisenstein series have Fourier expansions, which 
can be written in the form
\begin{equation} \label{eisenstein_fourier_series}
  \begin{split}
    E_\mathfrak{a}(\sigma_\mathfrak{b} z,s) &= \delta_{[\mathfrak{a}=\mathfrak{b}]} y^s + \pi^{\frac{1}{2}} \frac{\Gamma(s-\frac{1}{2})}{\Gamma(s)} \varphi_{\mathfrak{ab}0}(s)y^{1-s}\\
    &\quad+\frac{2\pi^s y^{\frac{1}{2}}}{\Gamma(s)} \sum_{n \neq 0} \lvert n \rvert^{s-\frac{1}{2}} \varphi_{\mathfrak{ab}n}(s) K_{s-\frac{1}{2}}(2\pi \lvert n \rvert y) e(nx)
  \end{split}
\end{equation}
with known coefficients $\varphi_{\mathfrak{ab} n}(s)$.
When $\mathfrak{b} = \infty$ we will often write these coefficients as $\varphi_{\mathfrak{a}n}(s)$.
From~\eqref{eisenstein_fourier_series} and asymptotics of the $K$-Bessel function it is clear that
\begin{align}~\label{eisenstein_asymptotic}
E_\mathfrak{a}(\sigma_\mathfrak{b}z,\tfrac{k}{2}) = \delta_{[\mathfrak{a}=\mathfrak{b}]}y^{\frac{k}{2}} + \pi^{\frac{1}{2}} \frac{\Gamma(\frac{k-1}{2})}{\Gamma(\frac{k}{2})} \varphi_{\mathfrak{ab}0}(\tfrac{k}{2}) y^{1-\frac{k}{2}} +O_k\left(e^{-2\pi y}\right)
\end{align}
as $\Im z \to \infty$.
For $k \geq 3$, we conclude that $E_\mathfrak{a}(\sigma_\mathfrak{b}z,\frac{k}{2})$ vanishes as $\Im z \to \infty$ except in the case $\mathfrak{a}=\mathfrak{b}$, where it converges polynomially fast to $y^{\frac{k}{2}}$.

\begin{lemma}\label{lem:subtracting_eisenstein}
  For $k \geq 3$, the function $\mathcal{V}(z)$ given by
  \begin{equation*}
    \mathcal{V}(z):=\lvert \theta^k(z) \rvert^2 \Im(z)^{\frac{k}{2}}- E_\infty(z,\tfrac{k}{2})-E_0(z,\tfrac{k}{2}),
  \end{equation*}
  vanishes at each of the cusps of $\Gamma_0(4)$.
  Therefore $\mathcal{V}(z) \in L^2(\Gamma_0(4) \backslash \mathcal{H})$.
\end{lemma}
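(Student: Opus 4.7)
The plan is to verify that $\mathcal{V}(z)$ decays to zero at each of the three cusps $\infty$, $0$, $1/2$ of $\Gamma_0(4)$ by computing the leading asymptotic of each term, then appealing to integrability of the resulting decay against the hyperbolic measure $d\mu$. First I would note that $|\theta^k(z)|^2 \Im(z)^{k/2}$ is $\Gamma_0(4)$-invariant (a direct consequence of the weight-$k/2$ modularity of $\theta^k$), so its behavior at each cusp is captured by evaluating along a scaling matrix $\sigma_\mathfrak{a}$.

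At $\infty$, the Fourier expansion $\theta(z) = 1 + O(e^{-2\pi y})$ gives $|\theta^k|^2 y^{k/2} = y^{k/2} + O(y^{k/2} e^{-2\pi y})$; the leading $y^{k/2}$ is cancelled exactly by $E_\infty(z,k/2)$ via~\eqref{eisenstein_asymptotic}, while $E_0(z,k/2)$ contributes only $O(y^{1-k/2})$, so $\mathcal{V}(z) = O(y^{1-k/2})$. At $0$, I would take $\sigma_0 z = -1/(4z)$ and combine the classical theta inversion $\theta(-1/(4z)) = \sqrt{-2iz}\,\theta(z)$ with $\Im(\sigma_0 z) = y/(4|z|^2)$ to obtain
\[
  |\theta^k(\sigma_0 z)|^2 \Im(\sigma_0 z)^{k/2} = |\theta^k(z)|^2 \Im(z)^{k/2}.
\]
This is the statement that the Fricke involution $W_4: z \mapsto -1/(4z)$ preserves $|\theta^k|^2 \Im^{k/2}$ and swaps the cusps $\infty$ and $0$ as well as the attached Eisenstein series, so the estimate $\mathcal{V}(\sigma_0 z) = O(y^{1-k/2})$ follows by symmetry with the analysis at $\infty$.

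The main obstacle is the cusp $1/2$, where no Fricke symmetry is available. Here I would establish that $\theta$ vanishes at $1/2$, so that $|\theta^k(\sigma_{1/2} z)|^2 \Im(\sigma_{1/2} z)^{k/2}$ decays exponentially in $y$. This requires a careful computation using the weight-$1/2$ theta multiplier and the Fourier expansion of $\theta^k$ along the scaling matrix for $1/2$; once in hand, the vanishing dominates the $O(y^{1-k/2})$ contributions of $E_\infty(\sigma_{1/2} z, k/2)$ and $E_0(\sigma_{1/2} z, k/2)$ from~\eqref{eisenstein_asymptotic}, so $\mathcal{V}(\sigma_{1/2} z) = O(y^{1-k/2})$ as well.

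Combining the three estimates, $\mathcal{V}$ is $O(y^{1-k/2})$ in a neighborhood of every cusp and continuous on the bounded part of a fundamental domain. For $k \geq 3$ the tail integral $\int_Y^\infty y^{2-k}\, y^{-2}\, dy$ is finite, so $\int |\mathcal{V}|^2\, d\mu$ converges, giving $\mathcal{V} \in L^2(\Gamma_0(4)\backslash \mathcal{H})$.
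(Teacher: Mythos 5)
Your proposal is correct and follows essentially the same route as the paper: exponential decay of $\lvert\theta^k\rvert^2 y^{k/2}$ to $y^{k/2}$ at $\infty$, the theta inversion (Fricke) symmetry to transfer this to the cusp $0$, vanishing of $\theta$ at the cusp $\tfrac{1}{2}$, and then the elementary tail estimate $\int^\infty y^{2-k}\,y^{-2}\,dy<\infty$ for $k\geq 3$. The only step you defer --- that $\theta$ vanishes at $\tfrac{1}{2}$ --- is done in the paper in one line via $\theta(z+\tfrac{1}{2})=2\theta(4z)-\theta(z)$ together with the inversion formula, so there is no genuine gap.
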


\begin{proof}
  We compute the growth of $\lvert \theta^k(z) \rvert^2 \Im(z)^{\frac{k}{2}}$ at the three cusps $0, \tfrac{1}{2}$, and $\infty$ of $\Gamma_0(4)$ and compare to that of the Eisenstein series.

  At the cusp $\infty$, we observe directly from the Fourier expansion that
  \begin{equation*}
    \lvert \theta^k(z) \rvert^2 \Im(z)^{\frac{k}{2}} = y^\frac{k}{2} \left(1 + O(e^{-2\pi y})\right)
  \end{equation*}
  as $\Im z \to \infty$.
Thus growth at the $\infty$ cusp is exactly cancelled by subtracting the Eisenstein series $E_\infty(z,\tfrac{k}{2})$.

  At the cusp $0$, we use $\sigma_0 = \left(\begin{smallmatrix} 0& -\frac{1}{2} \\ 2&0 \end{smallmatrix}\right)$ to compute
  \begin{align*}
    \theta \big\rvert_{\sigma_0}(z) &= (-2iz)^{-\frac{1}{2}} \theta\left(\!\left(\begin{matrix} 0 & -\frac{1}{2} \\ 2 & 0 \end{matrix} \right)z \right) = (-2iz)^{-\frac{1}{2}} \theta \left(-\frac{1}{4z}\right) \\
    &=(-2iz)^{-\frac{1}{2}}(-2iz)^{\frac{1}{2}} \theta(z) = \theta(z),
  \end{align*}
in which we've used the involution equation $\theta(-1/4z) = (-2iz)^{1/2} \theta(z)$ for the theta function.
Therefore $\lvert \theta^k(\sigma_0(z)) \rvert^2 \Im(\sigma_0 z)^\frac{k}{2} = y^\frac{k}{2}(1+O(e^{-2\pi y}))$ as $z \to \infty$, hence subtracting $E_0(z, \tfrac{k}{2})$ cancels the growth at the $0$ cusp.

To address the cusp $\tfrac{1}{2}$, we first note that $\theta(z+\tfrac{1}{2}) = 2\theta(4z)-\theta(z)$ by comparison of Fourier expansions. The functional equation of $\theta(z)$ gives
\[\theta(z+\tfrac{1}{2}) = 2 \theta(4z) - \theta(z) = (-2iz)^{-\frac{1}{2}} \left( \theta\left(\frac{-1}{16z}\right) - \theta\left(\frac{-1}{4z}\right)\right),
\]
  which converges to $0$ exponentially fast as $z \to 0$ non-horizontally in
  $\mathcal{H}$.
  Thus $\lvert \theta^k(z) \rvert^2 \Im(z)^\frac{k}{2} \to 0$ as $z\to
  \tfrac{1}{2}$ and it is not necessary to mitigate any growth at the cusp
  $\tfrac{1}{2}$.

\end{proof}

We will use $\mathcal{V}(z)$ in place of $\lvert \theta^k(z) \rvert^2 \Im(z)^{\frac{k}{2}}$ to derive the analytic properties of $Z_k(s,w)$.
Replacing~\eqref{eq:InnerProduct-Ph-theta} with the inner product
$\left\langle \mathcal{V}(\cdot), P_h(\cdot,\overline{s})\right\rangle
$
and performing the calculations from the start of this section yields
\begin{equation} \label{series_inner_product}
  \begin{split}
    &\frac{(4\pi)^{s+\frac{k}{2}-1}}{\Gamma(s+\frac{k}{2}-1)}\big\langle \mathcal{V},P_h(\cdot,\overline{s})\big\rangle \\
    &\qquad= D_k(s; h) - \frac{(2\pi)^k\Gamma(s-\frac{k}{2})}{\Gamma(\frac{k}{2})\Gamma(s)} \frac{\big(\varphi_{\infty h}(\tfrac{k}{2}) + \varphi_{0h}(\tfrac{k}{2})\big)}{h^{s - \frac{k}{2}}},
  \end{split}
\end{equation}
where $D_k(s;h)$ is as in~\eqref{eq:Dksh_definition}.
We note that we use~\cite[6.621(3)]{GradshteynRyzhik07} to evaluate the $y$-integral involved in expanding the inner products concerning the Eisenstein series.
Dividing by $h^w$, summing over $h \geq 1$, and rearranging yields
\begin{equation}\label{eq:Zk_equals_inner_and_eisenstein}
  \begin{split}
    Z_k(s,w) &= \frac{(4\pi)^{s+\frac{k}{2}-1}}{\Gamma(s+\frac{k}{2}-1)}\sum_{h \geq 1} \frac{\big\langle \mathcal{V},P_h(\cdot,\overline{s})\big\rangle}{h^w} \\
    &\quad + \frac{(2\pi)^k\Gamma(s-\frac{k}{2})}{\Gamma(\frac{k}{2})\Gamma(s)} \sum_{h \geq 1} \frac{\big(\varphi_{\infty h}(\tfrac{k}{2}) + \varphi_{0h}(\tfrac{k}{2})\big)}{h^{s + w - \frac{k}{2}}}.
 \end{split}
\end{equation}

\subsection{Spectral Expansion}

By Selberg's Spectral Theorem (as in~\cite[Theorem~15.5]{IwaniecKowalski04}),
the Poincar\'{e} series $P_h(z,s)$ has a spectral expansion of the form
\begin{equation} \label{spectral_expansion_poincare}
  \begin{split}
    P_h(z,s) = &\sum_j \langle P_h(\cdot,s),\mu_j \rangle \mu_j(z) \\
    &+
    \sum_\mathfrak{a} \frac{1}{4\pi}\int_{-\infty}^\infty
    \langle P_h(\cdot,s),E_\mathfrak{a}(\cdot,\tfrac{1}{2}+it)\rangle
    E_\mathfrak{a}(z,\tfrac{1}{2}+it)dt,
  \end{split}
\end{equation}
where $\mathfrak{a}$ ranges over the cusps of $\Gamma_0(4) \backslash
\mathcal{H}$, and
$\{\mu_j\}$ denotes an orthonormal basis of the residual and cuspidal spaces,
consisting of the constant form $\mu_0$ and of Hecke-Maass forms $\mu_j$ for
$L^2(\Gamma_0(4)\backslash \mathcal{H})$ with associated types
$\frac{1}{2}+it_j$.
The inner product of the Poincar\'{e} series against the constant term $\mu_0$
vanishes, so we omit further consideration of it.
We think of the sum over $j$ as the ``discrete part of the spectrum'' and the
sum of integrals of Eisenstein series as the ``continuous part of the
spectrum.''
Each Maass forms admits a Fourier expansion of the form
\begin{equation}\label{eq:maassfourier}
  \mu_j(z) = \sum_{n \neq 0} \rho_j(n) y^{\frac{1}{2}} K_{it_j}(2\pi \lvert n \rvert y)e(nx),
\end{equation}
where $e(x) = e^{2\pi i x}$, and has an associated $L$-function of the form
\begin{equation*}
  L(s, \mu_j) = \sum_{n \geq 1} \frac{\rho_j(n)}{n^s}.
\end{equation*}

In this section, we use the spectral expansion~\eqref{spectral_expansion_poincare} in the inner product in~\eqref{eq:Zk_equals_inner_and_eisenstein} to prove the following proposition.

\begin{proposition}\label{prop:Zksw_spectral_decomposition}
  For $\Re s$ sufficiently large, the shifted convolution sum $Z_k(s,w)$ can be expressed as
  \begin{align}
    Z_k(s,w) &= \frac{(2\pi)^k \Gamma(s-\frac{k}{2})}{\Gamma(\frac{k}{2})\Gamma(s)} \sum_{h=1}^\infty \frac{\left(\varphi_{0h}(\frac{k}{2})+\varphi_{\infty h}(\frac{k}{2})\right)}{h^{w+s-\frac{k}{2}}} \nonumber \\
  &\quad+ \frac{(4\pi)^\frac{k}{2}}{2}\sum_j  G(s,it_j) L(s+w-\tfrac{1}{2},\mu_j)\langle \mathcal{V},\mu_j \rangle \label{spectral_expansion_zk} \\  \nonumber
  &\quad+\frac{(4\pi)^{\frac{k}{2}}}{4\pi i} \!\sum_{\mathfrak{a}} \!\int_{(0)}\!\!\frac{G(s,z)\pi^{\frac{1}{2}+z}}{\Gamma(\frac{1}{2}+z)} \sum_{h \geq 1} \frac{\overline{\varphi_{\mathfrak{a}h}(\frac{1}{2}-z)}}{h^{s+w-\frac{1}{2}-z}}\langle \mathcal{V},E_\mathfrak{a}(\cdot,\tfrac{1}{2}-\overline{z})\rangle dz,
  \end{align}
  in which $G(s,z)$ denotes the collected gamma factors,
  \[
    G(s,z):= \frac{\Gamma(s-\frac{1}{2}+z)\Gamma(s-\frac{1}{2}-z)}{\Gamma(s+\frac{k}{2}-1)\Gamma(s)}.
  \]
  We refer to the first line of~\eqref{spectral_expansion_zk} as the ``non-spectral part,'' to the second line as the ``discrete part of the spectrum,'' and to the third line as the ``continuous part of the spectrum.''
\end{proposition}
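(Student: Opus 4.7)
The starting point is the expression for $Z_k(s,w)$ obtained at the end of the previous subsection, namely
\begin{equation*}
Z_k(s,w) = \frac{(4\pi)^{s+\frac{k}{2}-1}}{\Gamma(s+\frac{k}{2}-1)} \sum_{h\geq 1} \frac{\langle \mathcal{V}, P_h(\cdot,\overline{s})\rangle}{h^w} + \frac{(2\pi)^k \Gamma(s-\frac{k}{2})}{\Gamma(\frac{k}{2})\Gamma(s)} \sum_{h\geq 1} \frac{\varphi_{\infty h}(\frac{k}{2})+\varphi_{0h}(\frac{k}{2})}{h^{s+w-\frac{k}{2}}}.
\end{equation*}
The second term is exactly the non-spectral part in the claimed formula, so the task reduces to inserting the spectral expansion of $\mathcal{V}(z) \in L^2(\Gamma_0(4)\backslash \mathcal{H})$ into the first term. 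By Parseval,
\begin{equation*}
\langle \mathcal{V}, P_h(\cdot,\overline{s})\rangle = \sum_j \langle \mathcal{V},\mu_j\rangle \overline{\langle P_h(\cdot,\overline{s}),\mu_j\rangle} + \sum_\mathfrak{a} \frac{1}{4\pi}\int_{-\infty}^\infty \langle \mathcal{V}, E_\mathfrak{a}(\cdot,\tfrac{1}{2}+it)\rangle \overline{\langle P_h(\cdot,\overline{s}), E_\mathfrak{a}(\cdot,\tfrac{1}{2}+it)\rangle}\, dt.
\end{equation*}
All subsequent work consists of computing the two Poincar\'e-series inner products explicitly and summing the result against $h^{-w}$.

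For the discrete piece, I would unfold $P_h(z,\overline{s})$ against $\Gamma_\infty \backslash \mathcal{H}$, insert the Fourier expansion~\eqref{eq:maassfourier} of $\mu_j$, integrate in $x$ over $[0,1]$ (which selects the $n=h$ Fourier mode and produces $\rho_j(h)$), and evaluate the resulting $y$-integral $\int_0^\infty y^{s-3/2} e^{-2\pi h y} K_{it_j}(2\pi h y)\,dy$ via \cite[6.621(3)]{GradshteynRyzhik07}. After taking the complex conjugate, this yields
\begin{equation*}
\overline{\langle P_h(\cdot,\overline{s}),\mu_j\rangle} = \rho_j(h)\,\frac{\sqrt{\pi}\,\Gamma(s-\frac{1}{2}+it_j)\Gamma(s-\frac{1}{2}-it_j)}{(4\pi h)^{s-\frac{1}{2}}\Gamma(s)}.
\end{equation*}
Dividing by $h^w$ and summing over $h$ assembles $L(s+w-\frac{1}{2},\mu_j)$ (for $\Re s$ large enough that the series converges absolutely), and multiplying by $(4\pi)^{s+\frac{k}{2}-1}/\Gamma(s+\frac{k}{2}-1)$ collapses the $(4\pi)$-powers and gamma factors into the advertised $\frac{(4\pi)^{k/2}}{2}\,G(s,it_j)$.

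For the continuous piece I would carry out the analogous unfolding, but now against the Fourier expansion~\eqref{eisenstein_fourier_series} of $E_\mathfrak{a}(z,\tfrac{1}{2}+it)$ at the $\infty$-cusp. The constant terms integrate to zero in $x$, while the $(-h)$-th Fourier coefficient survives and, by the $n \leftrightarrow -n$ symmetry of $\varphi_{\mathfrak{a}\infty n}$, contributes $\varphi_{\mathfrak{a}h}(\tfrac{1}{2}-it)$. The same Bessel-$K$ integral evaluation yields
\begin{equation*}
\overline{\langle P_h(\cdot,\overline{s}),E_\mathfrak{a}(\cdot,\tfrac{1}{2}+it)\rangle} = \frac{2\pi^{\frac{1}{2}+it} h^{it}}{\Gamma(\frac{1}{2}+it)}\,\overline{\varphi_{\mathfrak{a}h}(\tfrac{1}{2}-it)}\,\frac{\sqrt{\pi}\,\Gamma(s-\frac{1}{2}+it)\Gamma(s-\frac{1}{2}-it)}{(4\pi h)^{s-\frac{1}{2}}\Gamma(s)}.
\end{equation*}
Substituting $z=it$, so that $\frac{1}{4\pi}\int_{-\infty}^\infty dt = \frac{1}{4\pi i}\int_{(0)} dz$ and $\overline{E_\mathfrak{a}(\cdot,\tfrac{1}{2}+it)} = E_\mathfrak{a}(\cdot,\tfrac{1}{2}-\overline{z})$ on the critical line, I would again interchange the $h$-sum with the $z$-integration and collect the resulting factors: $\sqrt{\pi}\cdot 2/(4\pi)^{3/2} = 1/(4\pi)$, the remaining $(4\pi)^{k/2-3/2}$ pairs with the $(4\pi)^{s+\frac{k}{2}-1}/\Gamma(s+\frac{k}{2}-1)$ prefactor, and the Gamma quotient reassembles into $G(s,z)\,\pi^{\frac{1}{2}+z}/\Gamma(\frac{1}{2}+z)$.

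The main technical obstacle is justifying the interchange of the $h$-summation with the spectral sum and the spectral integral. For $\Re s$ sufficiently large the $h$-sum of $D_k(s;h)/h^w$ is absolutely convergent by the bound for $Z_k(s,0)$ established in Proposition~\ref{prop:dirichlet}; likewise, polynomial bounds for $\rho_j(h)$ and $\varphi_{\mathfrak{a}h}(\tfrac{1}{2}+it)$ together with Weyl-type convexity bounds for $L(s+w-\tfrac{1}{2},\mu_j)$ and standard rapid decay of $\langle \mathcal{V},\mu_j\rangle,\langle \mathcal{V},E_\mathfrak{a}(\cdot,\tfrac{1}{2}+it)\rangle$ in the spectral parameter (which follow from $\mathcal{V}\in L^2$ and repeated integration by parts against the Laplacian) secure absolute convergence on both sides. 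Fubini then legitimizes the interchange and yields the claimed identity for $\Re s$ large. The remaining bookkeeping is purely the gamma-factor and $(4\pi)$-power collection already previewed above.
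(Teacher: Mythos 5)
Your proposal is correct and follows essentially the same route as the paper: start from the identity expressing $Z_k(s,w)$ via $\sum_h h^{-w}\langle \mathcal{V}, P_h(\cdot,\overline{s})\rangle$ plus the Eisenstein correction term, insert the spectral expansion of the Poincar\'e series (your Parseval step is the same computation), evaluate $\langle P_h(\cdot,s),\mu_j\rangle$ and $\langle P_h(\cdot,s),E_\mathfrak{a}(\cdot,\tfrac12+it)\rangle$ by unfolding and \cite[6.621(3)]{GradshteynRyzhik07}, and collect the $(4\pi)$-powers and gamma factors; your explicit inner-product formulas and constant bookkeeping agree with the paper's. The only difference is that you spell out the Fubini justification for interchanging the $h$-sum with the spectral sum and integral, which the paper leaves implicit.
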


\begin{proof}

The automorphic invariance and Fourier expansion of Maass forms can be used to
expand the inner product of $\mu_j$ against the Poincar\'e series via a
standard unfolding argument and the integral identity
\cite[6.621(3)]{GradshteynRyzhik07}.
One obtains
\[
  \big\langle P_h(\cdot,s),\mu_j\big\rangle=\frac{\overline{\rho_j(h)} \sqrt{\pi}}{(4\pi h)^{s-\frac{1}{2}}} \frac{\Gamma(s-\frac{1}{2}-it_j)\Gamma(s-\frac{1}{2}+it_j)}{\Gamma(s)}.
\]
It follows that the discrete part of the spectrum of $P_h(z,s)$ can be written as
\begin{align} \label{poincare_discrete}
\frac{\sqrt{\pi}}{(4\pi h)^{s-\frac{1}{2}} \Gamma(s)}\sum_j \overline{\rho_j(h)} \Gamma(s-\tfrac{1}{2}-it_j)\Gamma(s-\tfrac{1}{2}+it_j).
\end{align}
We have $\sup_j\{ \vert \Im t_j\vert \} =0$ as a consequence of Huxley's proof of the Selberg Eigenvalue Conjecture for Maass forms of small level~\cite{Huxley85}, which we note implies that~\eqref{poincare_discrete} is analytic in the right half-plane $\Re s > \frac{1}{2}$.

The inner product of the Poincar\'e series against the Eisenstein series $E_\mathfrak{a}(z,w)$ can similarly be computed to be
\[
  \big\langle P_h(\cdot,s), E_{\mathfrak{a}}(\cdot,w)\big\rangle
  = \frac{2 \pi^{\overline{w} + \frac{1}{2}} } {(4\pi h)^{s - \frac{1}{2}} }
  h^{\overline{w}-\frac{1}{2}} \varphi_{\mathfrak{a}h}(\overline{w})
  \frac{\Gamma(s+\overline{w}-1)\Gamma(s-\overline{w})}{\Gamma(s)\Gamma(\overline{w})},
\]
provided that $\Re s > \left\lvert \Re w-\frac{1}{2} \right\rvert + \frac{1}{2}$. With $t \in \R$ and $w=\frac{1}{2}+it$, this specializes to
\[
  \big\langle P_h(\cdot,s), E_\mathfrak{a}(\cdot, \tfrac{1}{2}+it)\big\rangle
  = \frac{2\pi^{1-it} \varphi_{\mathfrak{a} h}(\tfrac{1}{2}-it) }{(4\pi h)^{s-\frac{1}{2}}}
  \frac{\Gamma(s-\frac{1}{2}-it)\Gamma(s-\frac{1}{2}+it)}{h^{it}\Gamma(s)\Gamma(\frac{1}{2}-it)},
\]
which is valid provided that $\Re s  > \frac{1}{2}$.
Thus the continuous part of the spectrum of $P_h(z,s)$ takes the form
\begin{align} \label{poincare_continuous}
  \frac{1}{2} \sum_\mathfrak{a} \int_{-\infty}^\infty \frac{\varphi_{\mathfrak{a} h}(\tfrac{1}{2}-it)\Gamma(s-\frac{1}{2}-it)\Gamma(s-\frac{1}{2}+it)}{(4\pi h)^{s-\frac{1}{2}}(\pi h)^{it}\Gamma(s)\Gamma(\frac{1}{2}-it)} E_\mathfrak{a}(z,\tfrac{1}{2}+it)dt.
\end{align}

Substituting the discrete part of the spectrum~\eqref{poincare_discrete} and continuous part of the spectrum~\eqref{poincare_continuous} into the expansion of the Poincar\'e series~\eqref{spectral_expansion_poincare} gives
\begin{align*}
  &\left\langle \mathcal{V}, P_h(\cdot, \overline{s} ) \right\rangle = \frac{\sqrt{\pi}}{(4\pi h)^{s-\frac{1}{2}}\Gamma(s)} \!\sum_j \rho_j(h) \Gamma(s-\tfrac{1}{2}+it_j)\Gamma(s-\tfrac{1}{2}-it_j) \langle \mathcal{V},\mu_j \rangle \\
  &\hspace{9 mm}+\frac{1}{2} \sum_{\mathfrak{a}} \!\int_{-\infty}^\infty \!\!\frac{\overline{\varphi_{\mathfrak{a}h}(\frac{1}{2}-it)} \Gamma(s-\frac{1}{2}+it)\Gamma(s-\frac{1}{2}-it)}{(4\pi h)^{s-\frac{1}{2}}(\pi h)^{-it} \Gamma(s)\Gamma(\frac{1}{2}+it)} \langle \mathcal{V},E_\mathfrak{a}(\cdot,\tfrac{1}{2}+it)\rangle dt.
\end{align*}
Finally, substituting into~\eqref{eq:Zk_equals_inner_and_eisenstein} and simplifying completes the proof.
\end{proof}

\subsection{Meromorphic Continuation}
\label{sec:mero}

In order to provide the meromorphic continuation of $Z_k(s,w)$, we give the meromorphic continuation of each part of~\eqref{spectral_expansion_zk}.
We will prove the following lemma as a step towards understanding the analytic behavior of $W_k(s)$, which we study in \S\ref{sec:Wk_analytic}.

\begin{lemma}
  The shifted convolution $Z_k(s,w)$ has meromorphic continuation to $\mathbb{C}^2$.
  In particular, the specialized convolution sum $Z_k(s,0)$ has meromorphic continuation to the plane.
  For $\Re s > -\frac{1}{2}$, all poles of $Z_k(s,0)$ come from the non-spectral part (which has poles at $s = 1 + \frac{k}{2} - j$ for $j \in \mathbb{Z}_{\geq 0}$) and the continuous part of the spectrum (whose poles appear within the residual terms $\mathcal{R}^\pm_j$, as defined in \S\ref{sec:continuous_part_continuation}).
\end{lemma}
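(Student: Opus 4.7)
The proof will proceed by applying the spectral decomposition of Proposition~\ref{prop:Zksw_spectral_decomposition} and showing that each of the three pieces — non-spectral, discrete, and continuous — admits a meromorphic continuation whose poles can be located explicitly. Since the right-hand side of~\eqref{spectral_expansion_zk} is valid for $\Re s$ large and equal to $Z_k(s,w)$ there, continuing each piece to a common domain gives the desired meromorphic continuation of $Z_k(s,w)$.

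For the non-spectral part, the strategy is to recognize $\sum_{h \geq 1} \varphi_{\mathfrak{a} h}(k/2) h^{k/2 - s - w}$, for $\mathfrak{a} \in \{0, \infty\}$, as a Dirichlet series whose coefficients are divisor-type sums (this is standard from the explicit Fourier coefficients of $E_0$ and $E_\infty$ for $\Gamma_0(4)$). Thus this sum can be written as a product of Riemann zeta functions (and possibly Dirichlet $L$-functions), all of which are meromorphic on $\mathbb{C}$ with a single pole from the zeta factor at $s + w - k/2 = 1$. Combined with the prefactor $\Gamma(s - k/2)/\Gamma(s)$, which has simple poles at $s = k/2, k/2 - 1, k/2 - 2, \dots$, the non-spectral part extends meromorphically to $\mathbb{C}^2$, with all poles (at $w = 0$) lying at $s = 1 + k/2 - j$ for $j \in \mathbb{Z}_{\geq 0}$, as claimed.

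For the discrete part, each $L(s + w - \tfrac{1}{2}, \mu_j)$ is entire, and the inner products $\langle \mathcal{V}, \mu_j \rangle$ are bounded in $\ell^2$ since $\mathcal{V} \in L^2(\Gamma_0(4) \backslash \mathcal{H})$ by Lemma~\ref{lem:subtracting_eisenstein}. The gamma factor $G(s, i t_j)$ has poles at $s = \tfrac{1}{2} \pm i t_j - n$ for $n \in \mathbb{Z}_{\geq 0}$; because Huxley's theorem yields $t_j \in \mathbb{R}$ for $\Gamma_0(4)$, these poles lie in $\Re s \leq \tfrac{1}{2}$, and in particular none sit in $\Re s > \tfrac{1}{2}$. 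The sum over $j$ converges absolutely for $\Re s$ in any fixed compact set by Stirling-type exponential decay of $G(s, it_j)$ in $t_j$ combined with polynomial bounds on $L(\sigma, \mu_j)$ and on the Fourier coefficients entering $\langle \mathcal{V}, \mu_j\rangle$, so the discrete piece is meromorphic on $\mathbb{C}^2$ and contributes no poles to $Z_k(s, 0)$ in $\Re s > -\tfrac{1}{2}$.

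The continuous spectrum is the main obstacle. Begin on the contour $\Re z = 0$, where the inner integrals $\langle \mathcal{V}, E_\mathfrak{a}(\cdot, \tfrac{1}{2} - \overline{z})\rangle$ converge and the $h$-sum $\sum_h \overline{\varphi_{\mathfrak{a} h}(\tfrac{1}{2} - z)} h^{\tfrac{1}{2} + z - s - w}$ represents, after unfolding the divisor structure of $\varphi_{\mathfrak{a} h}$, a ratio of zeta functions times $h^{-(s + w - \tfrac{1}{2} - z)}$. To continue past $\Re s = \tfrac{1}{2}$, shift the $z$-contour to $\Re z = -\sigma$ for some $\sigma > 0$; the shifted contour integral remains absolutely convergent for $\Re s > \tfrac{1}{2} - \sigma$, while the crossed poles of the integrand — arising from the gamma factor $\Gamma(s - \tfrac{1}{2} - z)$ at $z = s - \tfrac{1}{2} - n$, the meromorphic continuation of the Eisenstein inner product at its explicit poles, and the pole of the zeta-like factor in the $h$-sum — contribute residues which constitute the terms $\mathcal{R}_j^\pm$ mentioned in the statement. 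Iterating contour shifts covers the entire half-plane $\Re s > -\tfrac{1}{2}$; the hard technical point is verifying uniform bounds (in $\Im z$ and $\Im s$) strong enough to justify each shift and to guarantee the residue series converge, which is where the most delicate estimates using the functional equations and Phragm\'en--Lindel\"of bounds for the scattering matrix enter. Collecting the pieces completes the meromorphic continuation of $Z_k(s, w)$ to $\mathbb{C}^2$ and confirms that all poles of $Z_k(s, 0)$ in $\Re s > -\tfrac{1}{2}$ originate from the non-spectral part and from the residual terms $\mathcal{R}_j^\pm$ of the continuous part.
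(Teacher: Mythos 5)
Your overall strategy matches the paper's: treat each piece of the spectral decomposition \eqref{spectral_expansion_zk} separately, and the non-spectral analysis and the contour-shifting machinery for the continuous part are essentially what the paper does. However, there is a genuine gap in your treatment of the discrete part of the spectrum.

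You correctly observe that $G(s, it_j)$ has poles at $s = \tfrac{1}{2} \pm it_j - n$ for $n \in \mathbb{Z}_{\geq 0}$, and that Huxley's theorem forces $t_j \in \mathbb{R}$ so these all lie in the closed half-plane $\Re s \leq \tfrac{1}{2}$. But you then conclude that the discrete part contributes no poles to $Z_k(s, 0)$ in the larger region $\Re s > -\tfrac{1}{2}$, and this does not follow: the $n = 0$ poles at $s = \tfrac{1}{2} \pm it_j$ sit squarely inside $\Re s > -\tfrac{1}{2}$ and would be genuine poles unless something cancels them. The paper handles this with a two-part cancellation argument you omit. First, when $\mu_j$ is odd, the inner product $\langle \mathcal{V}, \mu_j \rangle$ vanishes (since $\lvert \theta^k \rvert^2 \Im(\cdot)^{k/2}$ is even and the Eisenstein series subtracted off are orthogonal to cusp forms). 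Second, when $\mu_j$ is even, the functional equation of $L(s, \mu_j)$ forces the trivial zeros $L(-2m \pm it_j, \mu_j) = 0$ for $m \in \mathbb{Z}_{\geq 0}$; specializing to $w = 0$, the $L$-factor $L(s - \tfrac{1}{2}, \mu_j)$ vanishes precisely at $s = \tfrac{1}{2} \pm it_j$ and more generally at $s = \tfrac{1}{2} - m \pm it_j$ for even $m$, killing those gamma-factor poles. The first surviving poles of the discrete part then occur at $s = -\tfrac{1}{2} \pm it_j$ (odd $m = 1$), which lie on the boundary $\Re s = -\tfrac{1}{2}$ and hence outside the region of interest. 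Without this step your argument only rules out discrete-spectrum poles for $\Re s > \tfrac{1}{2}$, which is strictly weaker than the lemma's claim.

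A minor additional remark: your description of the continuous part's contour shift (moving $\Re z$ to $-\sigma < 0$) differs mechanically from the paper's iterated dance (shift right past one zeta pole, then move $s$ left, then shift back to $\Re z = 0$ past the companion pole), and the latter is careful to account for \emph{both} members of each residual pair $\mathcal{R}_j^\pm$. Your one-sided shift as stated would only collect one family of residues. You should either perform the two-sided bookkeeping explicitly or acknowledge that the full residual pair arises from both directions of the shift.
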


\subsubsection{Non-Spectral Part}

When $\mathfrak{b} = \infty$ and the cusp $\mathfrak{a}$ is represented in the
form $\mathfrak{a} = u/v$ with $(u,v)=1$, the exact definition of the
coefficients $\varphi_{\mathfrak{ab}h}(s)$ in~\eqref{eisenstein_fourier_series}
is given in~\cite[p. 247]{DeshouillersIwaniec82} by the formula
\begin{equation*}
  \varphi_{\mathfrak{a} h}(s)
  =
  \left(\frac{(v,4/v)}{4v}\right)^s \sum_{(\gamma, 4/v) =1}^\infty \gamma^{-2s}
  \hspace{-3 mm}
  \sum_{\substack{
    \delta (\gamma v)^*
    \\
    \gamma\delta v \equiv u v \!\!\!\!\! \mod (v^2,4)
  }}
  \! e\left(\frac{h\delta}{\gamma v}\right).
\end{equation*}
\begin{remark}
  The formula in~\cite{DeshouillersIwaniec82} has a minor error in the
  congruence condition in the sum. It is missing a factor of $v$ on the left
  (where our $v$ is $w$ in their notation).
\end{remark}
We represent the three inequivalent cusps $0, \frac{1}{2}$, and $\infty$ of
$\Gamma_0(4)$ as $1, \frac{1}{2}$, and $\frac{1}{4}$, respectively.
It is a standard exercise to compute these coefficients
(see~\cite[\S3.1]{Goldfeld06} for a similar calculation), and we find that
\begin{align*}
  \varphi_{0h}(s) &= \frac{\sigma^{(2)}_{1-2s}(h)}{4^s\zeta^{(2)}(2s)}, \qquad \varphi_{\frac{1}{2}h}(s) =  \frac{(-1)^h\sigma^{(2)}_{1-2s}(h)}{4^s\zeta^{(2)}(2s)}, \\
  \varphi_{\infty h}(s) &= \frac{2^{2-4s}\sigma_{1-2s}(\frac{h}{4}) - 2^{1-4s}\sigma_{1-2s}(\frac{h}{2})}{\zeta^{(2)}(2s)}.
\end{align*}
in which $\zeta^{(2)}(s)$ is the Riemann zeta function with its $2$-factor removed, $\sigma_\nu(h)$ is the sum of divisors function, and $\sigma_\nu^{(2)}(h)$ is the sum of odd-divisors function.
Dividing by $h^w$ and summing over $h$, we compute
\begin{equation} \label{eisenstein_coefficient_dirichlet_series}
  \begin{split}
    \sum_{h \geq 1} \frac{\varphi_{0h}(s)}{h^w} &= \frac{\zeta(w) \zeta^{(2)}(w-1+2s)}{4^t \zeta^{(2)}(2s)}, \\
    \sum_{h \geq 1} \frac{\varphi_{\frac{1}{2}h}(s)}{h^w} &= \frac{(2^{1-w}-1)\zeta(w)\zeta^{(2)}(w-1+2s)}{4^s \zeta^{(2)}(2s)},\\
    \sum_{h \geq 1} \frac{\varphi_{\infty h}(s)}{h^w} &= \frac{\zeta(w)\zeta(w-1+2s)}{2^{4s} \zeta^{(2)}(2s)} \left(\frac{1}{4^{w-1}}-\frac{1}{2^{w-1}} \right).
  \end{split}
\end{equation}
Applying these expressions to the spectral decomposition from Proposition~\ref{prop:Zksw_spectral_decomposition}, we rewrite the non-spectral part as
\begin{align*}
&\frac{\pi^k \Gamma(s-\frac{k}{2})\zeta(s+w-\frac{k}{2})\zeta(s+w+\frac{k}{2}-1)}{\Gamma(\frac{k}{2})\Gamma(s)\zeta^{(2)}(k)} \left(1+ \frac{4}{2^{2s+2w}} - \frac{4}{2^{\frac{k}{2}+s+w}}\right).
\end{align*}
This expression is analytic in the region $\Re s >k/2$ and $\Re(s+w) > 1+k/2$, and extends meromorphically to all of $\mathbb{C}^2$ with polar lines at $s+w=1+k/2$, $s+w=2-k/2$, and poles in $s$ at poles of $\Gamma(s-\tfrac{k}{2})/\Gamma(s)$.
Specializing to the case $w=0$, we note potential poles at $s=1+\frac{k}{2}-j$ for each integer $j\geq 0$.

\subsubsection{Discrete Part of the Spectrum}\label{sec:discrete_part_continuation}

The discrete part of the spectrum from~\eqref{spectral_expansion_zk} has clear meromorphic continuation induced by the meromorphic continuations of the individual $L(s, \mu_j)$.
We note that for any fixed $s$, the gamma functions in $G(s, it_j)$ give exponential decay so that the sum converges absolutely.

Note also that  $\langle \mathcal{V}, \mu_j \rangle = 0$ when $\mu_j$ is odd.
Indeed, $\lvert \theta^k(z) \rvert ^2\Im(z)^{\frac{k}{2}}$ is even and Eisenstein series are orthogonal to cusp forms.
Otherwise, if $\mu_j$ is even, we note by the functional equation of $L$-functions of even Maass forms that $L(-2m \pm it_j ,\mu_j)=0$ for any $m \in \mathbb{Z}_{\geq 0}$.
Specializing now to $w = 0$, these two observations combine to indicate that the apparent poles at $s = \frac{1}{2} \pm it_j$ do not exist.
Therefore the discrete part of the spectrum is analytic for $\Re s > - \frac{1}{2}$ and has poles at $s -\frac{1}{2} \pm it_j = -m$ for $m$ odd, $m \in \mathbb{Z}_{> 0}$.

\subsubsection{Continuous Part of the Spectrum}\label{sec:continuous_part_continuation}

The continuous part of the spectrum from~\eqref{spectral_expansion_zk} requires more nuanced analysis than the discrete part or non-spectral part, due to the interaction of independent complex variables.

For notational simplicity, we write the continuous part in the form
\begin{align} \label{continuous_part_simplified}
  \frac{(4\pi)^{\frac{k}{2}}}{4\pi i} \sum_{\mathfrak{a}} \int_{(0)}\!\frac{G(s,z)\pi^{\frac{1}{2}+z}}{\Gamma(\frac{1}{2}+z)} \zeta_\mathfrak{a}(s+w,z) \langle \mathcal{V},E_\mathfrak{a}(\cdot,\tfrac{1}{2}-\overline{z})\rangle dz,
\end{align}
in which $\zeta_\mathfrak{a}(s,z)$ is defined by
\[
  \zeta_\mathfrak{a}(s,z) = \sum_{h \geq 1} \frac{\overline{\varphi_{\mathfrak{a} h}(\frac{1}{2}-z)}}{h^{s-\frac{1}{2}-z}}.
\]
It is quickly verified using~\eqref{eisenstein_coefficient_dirichlet_series} that
\begin{equation*}
  \zeta_0(s,z)
  =
  \frac{\zeta(s-\frac{1}{2}-z)\zeta^{(2)}(s-\frac{1}{2}+z)}{2^{1+2z} \zeta^{(2)}(1+2z)}.
\end{equation*}
(The expressions associated to the other cusps are very similar).
It is now clear that the continuous part of the spectrum is analytic in the
region $\Re(s + w) > \frac{3}{2}$ and $\Re s > \frac{1}{2}$, and that the
integrand has apparent poles when
$s+w-\frac{1}{2} \pm z = 1$ and $s=\frac{1}{2}\pm z -j$
for $j \in \mathbb{Z}_{\geq 0}$.
It is now necessary to disentangle these poles from the integration variable.

Arguing as in~\cite[\S4.4.2]{HulseKuanLowryDudaWalker17} and~\cite{HoffsteinHulse13}, we iteratively extend the meromorphic continuation of the continuous part of the spectrum by carefully shifting lines of integration and collecting residual terms.

For small $\epsilon >0$, let  $\Re s$  lie in the interval $(\tfrac{3}{2}-\Re w, \frac{3}{2}-\Re w+\epsilon)$ and furthermore suppose $s$ is at least a distance of $2\epsilon$ from the potential poles of $G(s,z)$. We shift the $z$-contour to the right, along a contour $C$ which bends to remain in the zero-free region of $\zeta(1-2z)$ and thus avoids potential poles contributed by the inner product, $\langle \mathcal{V},E_\mathfrak{a}(\cdot,\tfrac{1}{2}-\overline{z})\rangle$. In so doing, we pass a pole at $s+w-\frac{1}{2}-z=1$ with residue
\begin{align*}
  \mathcal{R}_{1}^-:&=\frac{(4\pi)^\frac{k}{2}}{2} \!\Res_{z=s+w-\frac{3}{2}} \frac{G(s,z) \pi^{\frac{1}{2}+z}}{\Gamma(\frac{1}{2}+z)} \sum_\mathfrak{a} \zeta_\mathfrak{a}(s+w,z)\left\langle \mathcal{V},E_\mathfrak{a}(\cdot,\tfrac{1}{2}-\overline{z})\right\rangle.
  \end{align*}
The $2$-factors in $\zeta_\infty(s+w,z)$ and $\zeta_\frac{1}{2}(s+w,z)$ create zeros that cancel the pole, so the only cusp that gives a polar contribution at $z=s+w-\frac{3}{2}$ is the $0$ cusp.
Simplifying, we find that
\begin{align} \label{r_1plus}
  \mathcal{R}_1^- = -\frac{(4\pi)^\frac{k}{2}}{2 \pi^{1-s-w}} \frac{\Gamma(1-w)\Gamma(2s+w-2) \left\langle \mathcal{V},E_0(\cdot, 2- \overline{s}-\overline{w})\right\rangle}{2^{2s+2w-2}\Gamma(s)\Gamma(s+\frac{k}{2}-1)\Gamma(s+w-1)}.
\end{align}

The residue $\mathcal{R}_1^- = \mathcal{R}_1^-(s,w)$ has a straightforward
meromorphic continuation to all $\mathbb{C}^2$.  Our deformation of the contour
integral~\eqref{continuous_part_simplified} is analytic for $s$ to the right of
the contour $\frac{3}{2}- \Re w - C$ and to the left of the line
$\frac{3}{2}-\Re w +\epsilon$.
When $s$ is moved just to the left of the $\frac{3}{2}-\Re w$ line in this
region, we can shift the contour of $z$ integration back to $\Re z = 0$.
This passes over the \emph{other} pole at $s + w - \tfrac{1}{2} + z = 1$ from the other zeta function and introduces a residue
\begin{align}\label{eq:R_1_plus_def}
  \mathcal{R}_{1}^{+} &:= \frac{(4\pi)^\frac{k}{2}}{2} \!\!\Res_{z=\frac{3}{2}-s-w} \!\!\frac{G(s,z) \pi^{\frac{1}{2}+z}}{\Gamma(\frac{1}{2}+z)} \!\sum_\mathfrak{a} \zeta_\mathfrak{a}(s+w,z)
  \left\langle \mathcal{V},E_\mathfrak{a}(\cdot,\tfrac{1}{2}-\overline{z})\right\rangle
\end{align}

The residue $\mathcal{R}_1^+$
also has a straightforward meromorphic continuation.
We note that the shifted contour integral has no further poles with $\Re(s+w) > \tfrac{1}{2}$ and $\Re s > \frac{1}{2}$.
Therefore the continuous part of the spectrum, originally defined for $\Re(s+w) > \tfrac{3}{2}$ and  $\Re s > \frac{1}{2}$, has meromorphic extension to $\Re (s+w) > \frac{1}{2}$ and  $\Re s > \frac{1}{2}$, given by
\begin{equation*}
  \frac{(4\pi)^{\frac{k}{2}}}{4\pi i} \sum_{\mathfrak{a}} \int_{(0)}\!\frac{G(s,z)\pi^{\frac{1}{2}+z}}{\Gamma(\frac{1}{2}+z)} \zeta_\mathfrak{a}(s+w,z) \langle \mathcal{V},E_\mathfrak{a}(\cdot,\tfrac{1}{2}-\overline{z})\rangle dz + \mathcal{R}_1^+ - \mathcal{R}_1^-,
\end{equation*}
where by a slight abuse of notation we claim that the two residual terms $\mathcal{R}_1^\pm(s,w)$ appear in the continuation only when $\Re(s+w) < \tfrac{3}{2}$, and with a slight variation when $\Re(s+w)=\frac{3}{2}$.

We now iterate this argument to push the meromorphic continuation of the continuous part past additional polar lines, as in~\cite[\S4, p. 481-483]{HoffsteinHulse13} or~\cite[\S4]{HulseKuanLowryDudaWalker17}.
That is, for $\Re s$ near $\tfrac{1}{2} - j$ with $j \in \mathbb{Z}_{\geq 0}$, we shift the line of integration in $z$ past a pole due to a gamma factor in the numerator of $G(s,z)$, move $s$ left past the polar line, and shift the line of integration back to the imaginary axis, passing a pole from the other gamma factor in the numerator of $G(s,z)$.
Each iteration contributes two additional residual terms with opposite signs, denoted by $\mathcal{R}_{-j}^+ - \mathcal{R}_{-j}^{-}$, in which
\begin{align*}
  \mathcal{R}^+_{-j}
  &=
  \frac{(4\pi)^\frac{k}{2}}{2} \sum_\mathfrak{a} \Res_{z=\frac{1}{2}-j-s}
  \frac{G(s,z) \pi^{\frac{1}{2}+z}}{\Gamma(\frac{1}{2}+z)} \zeta_\mathfrak{a}(s+w,z)
  \left\langle \mathcal{V}, E_\mathfrak{a}(\cdot, \tfrac{1}{2}-\overline{z})\right\rangle,
  \\
  \mathcal{R}^-_{-j}
  &=
  \frac{(4\pi)^\frac{k}{2}}{2} \sum_\mathfrak{a} \Res_{z=s+j-\frac{1}{2}}
  \frac{G(s,z) \pi^{\frac{1}{2}+z}}{\Gamma(\frac{1}{2}+z)} \zeta_\mathfrak{a}(s+w,z)
  \left\langle \mathcal{V}, E_\mathfrak{a}(\cdot, \tfrac{1}{2}-\overline{z})\right\rangle.
\end{align*}
Note that the notation $\mathcal{R}^{\pm}_{-j}$ resembles the notation for $\mathcal{R}^{\pm}_1$, but the source of the poles for $\mathcal{R}^{\pm}_{-j}$ are the gamma functions in $G(s,z)$ instead of the zeta functions in $\zeta_a(s+w, z)$.
Thus the locations of the poles in $\mathcal{R}^{\pm}_{1}$ depend on $w$ while the locations of the poles in $\mathcal{R}^{\pm}_{-j}$ do not.
Each of these residual terms has an easily understood meromorphic continuation.
In this way, we obtain the meromorphic continuation of $Z_k(s,w)$ to the entire complex plane.

\section{Analytic Behavior of $W_k(s)$}\label{sec:Wk_analytic}

In this section, we outline some of the analytic properties of $W_k(s)$.
These properties will be used in \S\ref{sec:AnalysisD_PkPk} to understand $D(s, P_k \times P_k)$.

Recall from Proposition~\ref{prop:dirichlet} that
\begin{equation} \label{eq:wks_definition_reminder}
  W_k(s) = \sum_{n \geq 1} \frac{r_k(n)^2}{n^{s+k}} + 2Z_k(s + \tfrac{k}{2} + 1, 0).
\end{equation}
We refer to the sum in~\eqref{eq:wks_definition_reminder} as the \emph{diagonal part}.  The second term, $Z_k(s,w)$, is the \emph{off-diagonal part}, which we recall decomposes into three terms we have called the \emph{non-spectral}, \emph{discrete}, and \emph{continuous parts}.

\begin{theorem}\label{theorem:Wks_mero}
The function $W_k(s)$ has meromorphic continuation to all $s \in \mathbb{C}$.
In the half-plane $\Re s > - \frac{k+3}{2}$, all but one of the poles of $W_k(s)$ occur at non-positive even integers and come from the non-spectral part
\begin{equation*}
      \mathfrak{E}_k(s) = \frac{2\pi^k \Gamma(s+1)\zeta(s+1)\zeta(s+k)}{\Gamma(\frac{k}{2})\Gamma(s+\frac{k}{2}+1)\zeta^{(2)}(k)} \left(1+ \frac{1}{2^{2s+k}} - \frac{1}{2^{s+k-1}}\right).
    \end{equation*}

The function $W_k(s)$ has an additional pole at $s=-\frac{k+1}{2}$.  When $k>3$, this pole is simple and has residue
  \begin{align*}
      \Res_{s = -\frac{k+1}{2}} W_k(s) = (4\pi)^{\frac{k}{2}} \frac{\langle \mathcal{V}, E_0(\cdot, \frac{3}{2}) \rangle}{\pi^{\frac{3}{2}} \Gamma(\frac{k-1}{2})}.
  \end{align*}
  When $k=3$, this pole is a double pole, and the Laurent series of $W_3(s)$ about $s=-2$ has principal part
  \[
    -\frac{\pi^2}{3\zeta^{(2)}(3)(s+2)^2} + \frac{24a_0\zeta^{(2)}(3)-\pi^2\gamma-\pi^2\log(4\pi)}{3\zeta^{(2)}(3)(s+2)},
  \]
  where $a_0$ is the constant term in the Laurent series for the meromorphic continuation of $\langle \mathcal{V}, E_0(\cdot,\overline{s})\rangle$ at $s=\frac{3}{2}$.

\end{theorem}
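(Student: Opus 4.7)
The plan is to decompose $W_k(s) = D_1(s) + 2 Z_k(s + \tfrac{k}{2} + 1, 0)$ as in Proposition~\ref{prop:dirichlet}, where $D_1(s) = \sum_n r_k(n)^2/n^{s+k}$, and to combine the meromorphic continuation of $D_1$ with that of each of the three pieces of $Z_k$ from Proposition~\ref{prop:Zksw_spectral_decomposition} and \S\ref{sec:mero}. The diagonal part $D_1(s)$ is, up to normalization, the Rankin--Selberg $L$-function of $\theta^k$ with itself; its meromorphic continuation follows by unfolding $\int \lvert\theta^k\rvert^2 y^{k/2} E_\infty(z,s)\, d\mu$, and to handle the lack of square-integrability of $\lvert\theta^k\rvert^2 y^{k/2}$ one passes to $\mathcal{V}(z) + E_\infty(z,\tfrac{k}{2}) + E_0(z,\tfrac{k}{2})$ via Lemma~\ref{lem:subtracting_eisenstein}. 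This produces only a short list of candidate poles of $D_1$ in $\Re s > -(k+3)/2$; after combination with $\mathfrak{E}_k$, the apparent odd-integer poles of $\mathfrak{E}_k$ (for instance at $s = -1$) cancel, leaving only poles at non-positive even integers.

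For the off-diagonal, substituting $w=0$ and shifting $s \mapsto s + \tfrac{k}{2} + 1$ in the non-spectral formula from \S\ref{sec:mero}, together with the explicit Fourier coefficients~\eqref{eisenstein_coefficient_dirichlet_series}, immediately identifies the non-spectral contribution to $2Z_k$ with $\mathfrak{E}_k(s)$. The discrete part is analytic in $\Re s > -(k+3)/2$: Huxley's resolution of the Selberg eigenvalue conjecture at level $4$ forces $t_j \in \R$, and the trivial zeros $L(-2m \pm it_j, \mu_j) = 0$ for even Maass forms kill the would-be poles just outside this region. The continuous part's contour integral over $\Re z = 0$ is analytic for $\Re s' > \tfrac{1}{2}$, so any poles contributed by the continuous part in the strip of interest come from the residual terms $\mathcal{R}_1^\pm$ and $\mathcal{R}_{-j}^\pm$ generated in \S\ref{sec:continuous_part_continuation}, each of which inherits a straightforward meromorphic structure from the $\Gamma$- and $\zeta$-factors.

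The crux of the theorem is the residue at the additional pole $s = -(k+1)/2$, which in the unshifted variable corresponds to $s' = \tfrac{1}{2}$. At this point $\mathcal{R}_1^-$ develops a simple pole from $\Gamma(2s'-2)$ (with residue $-\tfrac{1}{2}$), and $\mathcal{R}_1^+$ contributes symmetrically through the pole of $\zeta^{(2)}(s' - \tfrac{1}{2} + z)$ at $z = \tfrac{3}{2} - s'$ via $\Res_{s=1}\zeta^{(2)}(s) = \tfrac{1}{2}$. The contributions from the cusps $\infty$ and $\tfrac{1}{2}$ vanish thanks to the $2$-factors in their respective $\zeta_\mathfrak{a}$, exactly as in the analysis of $\mathcal{R}_1^-$ in \S\ref{sec:continuous_part_continuation}, leaving only the cusp $0$. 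Collecting the $\Gamma$-, $\pi$-, and $2$-factors and multiplying by $2$ for the definition of $W_k$ yields the claimed residue $(4\pi)^{k/2}\langle \mathcal{V}, E_0(\cdot, \tfrac{3}{2})\rangle / (\pi^{3/2}\Gamma(\tfrac{k-1}{2}))$ whenever the inner product is finite, which holds for $k > 3$.

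The case $k=3$ requires an additional layer of analysis because $E_0(z, \tfrac{3}{2}) = E_0(z, k/2)$ is one of the Eisenstein series subtracted in the definition of $\mathcal{V}$, so the inner product $\langle \mathcal{V}, E_0(\cdot, \overline{s})\rangle$ itself acquires a simple pole at $s = \tfrac{3}{2}$ via the Maass--Selberg pairing $\langle E_0(\cdot, \tfrac{3}{2}), E_0(\cdot, \overline{s})\rangle$. Writing $\langle \mathcal{V}, E_0(\cdot, \overline{s})\rangle = a_{-1}(s - \tfrac{3}{2})^{-1} + a_0 + O(s - \tfrac{3}{2})$ and convolving with the Laurent expansions of $\Gamma(2s'-2)$, $\pi^{1-s'}$, $2^{2s'-2}$, and the remaining smooth factors produces a double pole at $s = -2$; the coefficient of $(s+2)^{-2}$ is determined by $a_{-1}$ (which Maass--Selberg pins down and produces $-\pi^2/(3\zeta^{(2)}(3))$ after all constants are assembled), while the coefficient of $(s+2)^{-1}$ picks up $a_0$ together with contributions from $\gamma$ and $\log(4\pi)$ from subleading terms. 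The main obstacle throughout is the bookkeeping: verifying that the diagonal $D_1$ and non-spectral $\mathfrak{E}_k$ conspire to eliminate spurious odd-integer poles, and, in the $k=3$ case, combining several overlapping Laurent expansions cleanly enough to extract the precise principal part.
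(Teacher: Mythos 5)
Your overall strategy coincides with the paper's: split $W_k$ into the diagonal plus $2Z_k(s+\tfrac{k}{2}+1,0)$, continue the diagonal by Gupta--Zagier regularization against $\mathcal{V}$, read off the non-spectral part as $\mathfrak{E}_k$, dispose of the discrete part via Huxley's theorem and the trivial zeros of $L(s,\mu_j)$, and locate the extra pole at $s=-\tfrac{k+1}{2}$ (i.e.\ $s'=\tfrac12$ in the unshifted variable) inside the residual terms $\mathcal{R}_1^\pm$, with the $k=3$ double pole arising from the collision with the pole of $\langle\mathcal{V},E_0(\cdot,\overline{s})\rangle$ at $s=\tfrac32$. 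That part of the plan is sound and matches the paper.

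The genuine gap is your claim that, after combining the diagonal with $\mathfrak{E}_k$, ``only poles at non-positive even integers'' remain. The regularized diagonal $\sum_n r_k(n)^2 n^{-(s+k)}$ has potential poles not only at $s=-1$ (which indeed cancels against the residue of $\mathfrak{E}_k$ there; the odd-integer poles of $\mathfrak{E}_k$ at $s\le -3$ are instead killed internally by trivial zeros of $\zeta(s+1)$, not by the diagonal), but also at $s=-\tfrac{k}{2}$, $s=-\tfrac{k}{2}-1$, and at the points where $2s+k+2$ is a nontrivial zero of $\zeta$ --- all of which lie in $\Re s>-\tfrac{k+3}{2}$ and none of which is a non-positive even integer when $k$ is odd. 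Your proposal offers no mechanism to remove them. The paper needs two further cancellations that you omit: the diagonal's pole at $s=-\tfrac{k}{2}$, whose residue is proportional to $\Res_{s=1}\langle\mathcal{V},E_0(\cdot,\overline{s})\rangle$, cancels against the pole of $4\mathcal{R}_1^+(s+\tfrac{k}{2}+1,0)$ at the same point coming from the Eisenstein series' pole at $1$; and, crucially, the residual pair $\mathcal{R}_0^+-\mathcal{R}_0^-$ equals $-\tfrac12\,L(s,\theta^k\times\theta^k)/\zeta(2s)$ and \emph{exactly cancels} the diagonal once $\Re s<-\tfrac{k+1}{2}$, which is what eliminates the poles at $s=-\tfrac{k}{2}-1$ and at the zeta zeros. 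Without these two steps the stated description of the pole set does not follow. Two smaller imprecisions: only the combined object $2(\mathcal{R}_1^+-\mathcal{R}_1^-)=-4\mathcal{R}_1^-$ reduces to the $0$ cusp (for $\mathcal{R}_1^+$ alone none of the cuspidal contributions vanish, and the reduction requires the antisymmetry $\mathcal{R}_1^+=-\mathcal{R}_1^-$, which is nontrivial at the non-squarefree level $4$); and the residual terms present in $\Re s>-\tfrac{k+3}{2}$ are $\mathcal{R}_1^\pm$ and $\mathcal{R}_0^\pm$, not the $\mathcal{R}_{-j}^\pm$ with $j\ge 1$.
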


We prove this theorem in the remainder of this section. We address the
meromorphic behavior of each part of $W_k(s)$ in turn, and produce
Theorem~\ref{theorem:Wks_mero} by assembling and showing cancellation between
these parts.

\subsection{Diagonal Part}\label{sec:diagonal_part}

We recognize the diagonal part in terms of the Rankin--Selberg $L$-function
associated to $\theta^k \times \theta^k$, written $L(s,\theta^k \times
\theta^k)$ and defined by
\begin{equation*}
  L(s,\theta^k \times \theta^k)
  =
  \zeta(2s) \sum_{n \geq 1} \frac{r_k(n)^2}{n^{s + \frac{k}{2} - 1}}.
\end{equation*}
As $y^{\frac{k}{2}}\lvert \theta^k(z) \rvert^2$ is not of rapid decay, we
interpret this $L$-function through Gupta's generalization of the Zagier
regularization method to congruence subgroups~\cite{Gupta00, ZagierRankinSelberg}.

Zagier's original argument shows how to recognize the diagonal sum as an inner product of the form $\langle \mathcal{V}, E_\infty(\cdot, \overline{s}) \rangle$.  This step does not appear explicitly in Gupta's generalization.
%
In Corollary~\ref{cor:diagonal_from_gupta} of~\ref{sec:appendix},
we extend Gupta's argument to prove that
\begin{equation*}
  \frac{\Gamma(s + \frac{k}{2} - 1)}{(4\pi)^{s + \frac{k}{2} - 1}}
  \frac{L(s,\theta^k \times \theta^k)}{\zeta(2s)}
  =
  \langle \mathcal{V}, E_\infty(\cdot, \overline{s}) \rangle
  =
  \langle \mathcal{V}(\sigma_0 \cdot), E_0(\cdot, \overline{s}) \rangle
\end{equation*}
for $s$ in the vertical strip $1 - \frac{k}{2} < \Re(s) < \frac{k}{2}$.
We also show that this function is analytic away from $s = \frac{k}{2}, 1, 0,
1-\frac{k}{2}$, and the zeros of $\zeta(2s)$.

This function relates to the diagonal part of $W_k(s)$ by a shift of variable.
Thus the diagonal part of $W_k(s)$ has potential poles
at $s = -1, - \frac{k}{2}, -\frac{k}{2} - 1$, $-k$, and at zeros of
$\zeta(2s + k + 2)$.

For the leading pole at $s=-1$, we evaluate directly
\begin{align} \label{diagonal_rightmost_res}
  \Res_{s=-1} \sum_{m=1}^\infty \frac{r_k(m)^2}{m^{s+k}}
  =
  \lim_{X \to \infty} \frac{k-1}{X^{k - 1}} \sum_{m \leq X} r_k(m)^2
  =
  \frac{\pi^k\zeta(k-1)}{\zeta^{(2)}(k)\Gamma(\frac{k}{2})^2}.
\end{align}
The second equality is the subject of~\cite{ChoiKumchevOsburn05}, which applies a
general method for evaluating sums of positive definite quadratic forms due to
M\"uller~\cite{Muller92}.
The second pole occurs at $s = - \frac{k}{2}$ and can be understood through Corollary~\ref{cor:diagonal_from_gupta} to give the residue
\begin{align}\label{diagonal_residue_s32}
  \frac{(4\pi)^\frac{k}{2}}{\Gamma(\frac{k}{2})}
  \Res_{s=1} \left\langle \mathcal{V}, E_0(\cdot,\overline{s})\right\rangle.
\end{align}

The poles from zeros of the zeta function and the two remaining poles in the
diagonal part can be analyzed using the functional equation for
$L(s, \theta^k \times \theta^k)$, but these details will not be necessary as we
will show that the diagonal part identically cancels with
$\mathcal{R}_0^+ - \mathcal{R}_0^-$ in a region containing these poles.

\subsection{Discrete Part}

As discussed in \S\ref{sec:discrete_part_continuation}, the discrete part of
$W_k(s)$ is meromorphic in $\mathbb{C}$ and analytic for $\Re s> -\frac{k+3}{2}$,
where we focus our analysis.
The boundary of this region, the line $\Re s = - \frac{k+3}{2}$, hosts a line
of poles coming from the eigenvalues $t_j$ of the Maass forms.

\subsection{Continuous Part}

We now discuss the analytic properties of the continuous part of $W_k(s)$ in
the right half-plane $\Re s > - \frac{k+3}{2}$.
As shown in \S\ref{sec:continuous_part_continuation}, $W_k(s)$ has a
meromorphic continuation to the entire complex plane which incorporates many
residual terms $\mathcal{R}_{-j}^\pm$ as $\Re s$ decreases.
However, the only residual terms present in $\Re s > - \frac{k+3}{2}$ are
$\mathcal{R}_1^\pm$ and $\mathcal{R}_0^\pm$.

In analogy with~\cite{HulseKuanLowryDudaWalker17}, we expect that
$\mathcal{R}_1^+ = -\mathcal{R}_1^-$ when $w=0$.
This is correct, but is harder to prove in our current situation because
the level, 4, is not square-free.

\begin{lemma}\label{lem:R1_antisymmetry}
  With the notation of \S\ref{sec:mero}, we have
  \begin{equation*}
    \mathcal{R}_1^+(s,0) = -\mathcal{R}_1^-(s,0).
  \end{equation*}
\end{lemma}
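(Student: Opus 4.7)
The lemma asserts that the two residues generated by moving the contour past the symmetric pole pair $z = \pm(s-\tfrac{3}{2})$ at $w=0$ are opposite. The natural explanation is that the \emph{entire integrand} on the contour is an even function of $z$: simple poles of an even meromorphic function at $\pm a$ automatically have residues of opposite sign, giving the claim. Accordingly, the plan is to write the integrand at $w=0$ as $F(s,z) = G(s,z)\, H(s,z)$, where
\[
H(s,z) \;:=\; \frac{\pi^{\frac{1}{2}+z}}{\Gamma(\tfrac{1}{2}+z)} \sum_\mathfrak{a} \zeta_\mathfrak{a}(s,z)\, T_\mathfrak{a}(z), \qquad T_\mathfrak{a}(z) := \big\langle \mathcal{V}, E_\mathfrak{a}(\cdot, \tfrac{1}{2}-\overline z)\big\rangle,
\]
and to prove $H(s,-z) = H(s,z)$; evenness of $G(s,z) = \Gamma(s-\tfrac{1}{2}+z)\Gamma(s-\tfrac{1}{2}-z)/(\Gamma(s)\Gamma(s+\tfrac{k}{2}-1))$ is manifest.

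The key input is the scattering functional equation of the vector-valued Eisenstein series, $E_\mathfrak{a}(z,\tfrac{1}{2}+\zeta) = \sum_\mathfrak{b}\varphi_{\mathfrak{ab}}(\tfrac{1}{2}+\zeta) E_\mathfrak{b}(z,\tfrac{1}{2}-\zeta)$ with $\varphi_{\mathfrak{ab}}(s) = \pi^{1/2}\Gamma(s-\tfrac{1}{2})\Gamma(s)^{-1}\varphi_{\mathfrak{ab}0}(s)$. Combined with the identity $\overline{E_\mathfrak{a}(w,u)} = E_\mathfrak{a}(w,\overline u)$ (which follows since each term of the series is a real power of $\mathrm{Im}(\sigma_\mathfrak{a}^{-1}\gamma z)$), this yields the transformation law
\[
T_\mathfrak{a}(-z) = \sum_\mathfrak{b} \varphi_{\mathfrak{ab}}(\tfrac{1}{2}+z)\, T_\mathfrak{b}(z).
\]
Substituting into $H(s,-z)$ and interchanging the sums over $\mathfrak{a}$ and $\mathfrak{b}$ reduces the evenness of $H$ to the purely Dirichlet-series identity
\[
\sum_\mathfrak{a} \zeta_\mathfrak{a}(s,-z)\,\varphi_{\mathfrak{ab}}(\tfrac{1}{2}+z) \;=\; \pi^{2z}\,\frac{\Gamma(\tfrac{1}{2}-z)}{\Gamma(\tfrac{1}{2}+z)}\,\zeta_\mathfrak{b}(s,z), \qquad \text{one for each cusp $\mathfrak{b}$ of $\Gamma_0(4)$.}
\]

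This last identity is a finite bookkeeping computation using the closed forms for $\zeta_0,\zeta_{\frac{1}{2}},\zeta_\infty$ recorded in~\eqref{eisenstein_coefficient_dirichlet_series} together with the analogous explicit expressions for the scattering-matrix entries $\varphi_{\mathfrak{ab}0}(s)$ of $\Gamma_0(4)$, obtained for all pairs of cusps by the method of~\cite[\S3.1]{Goldfeld06} and~\cite{DeshouillersIwaniec82}. This is the principal obstacle and the reason the step is harder than in~\cite{HulseKuanLowryDudaWalker17}: because $4$ is not squarefree, both sides of the identity carry nontrivial Euler factors at the prime $2$, and these factors --- together with the interplay between the functional equations of $\zeta^{(2)}(s)$ and $\zeta(s)$ --- must be carefully reconciled before the identity reveals itself. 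Once it is verified cusp-by-cusp, evenness of $H$ and hence $\mathcal{R}_1^+(s,0) = -\mathcal{R}_1^-(s,0)$ follow immediately.
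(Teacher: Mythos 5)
Your proposal is correct, and it takes a genuinely different route from the paper. The paper proves the lemma by brute force: it notes that at $w=0$ only the cusp $0$ contributes to the residue at $z=s-\tfrac32$, writes out $\mathcal{R}_1^-$ explicitly, expands $E_0(\cdot,2-\overline{s})$ via the scattering relation into the three $E_\mathfrak{a}(\cdot,\overline{s}-1)$, computes $\mathcal{R}_1^+$ separately (where all three cusps contribute), and matches the two expressions term by term using the explicit Dirichlet series~\eqref{eisenstein_coefficient_dirichlet_series}. You instead prove the stronger structural statement that the integrand of~\eqref{continuous_part_simplified} is an even meromorphic function of $z$, from which the antisymmetry of the residues at the symmetric pole pair $z=\pm(s-\tfrac32)$ is automatic; your reduction to the cusp-indexed identity for $\sum_\mathfrak{a}\zeta_\mathfrak{a}(s,-z)\varphi_{\mathfrak{ab}}(\tfrac12+z)$ is algebraically correct, as is the transformation law for $T_\mathfrak{a}(-z)$. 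One improvement worth recording: the identity you defer to ``finite bookkeeping'' does not actually require cusp-by-cusp reconciliation of Euler factors at $2$. Comparing nonzero Fourier coefficients on both sides of $\vec E(\cdot,\tfrac12-z)=\Phi(\tfrac12-z)\vec E(\cdot,\tfrac12+z)$ gives the adjoint relation
\begin{equation*}
  \varphi_{\mathfrak{a}h}(\tfrac12-z)
  =
  \pi^{2z}\frac{\Gamma(\tfrac12-z)}{\Gamma(\tfrac12+z)}\,h^{2z}
  \sum_\mathfrak{c}\phi_{\mathfrak{ac}}(\tfrac12-z)\,\varphi_{\mathfrak{c}h}(\tfrac12+z),
\end{equation*}
hence
$\zeta_\mathfrak{a}(s,-z)=\pi^{2z}\frac{\Gamma(1/2-z)}{\Gamma(1/2+z)}\sum_\mathfrak{c}\phi_{\mathfrak{ac}}(\tfrac12-z)\zeta_\mathfrak{c}(s,z)$,
and your identity then follows from the symmetry of $\Phi$ together with $\Phi(\tfrac12-z)\Phi(\tfrac12+z)=I$. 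This makes your argument both cleaner than the paper's and more general: evenness of the integrand holds for all $w$, and it delivers $\mathcal{R}_{-j}^+=-\mathcal{R}_{-j}^-$ for every $j$ (including the $\mathcal{R}_0^\pm$ lemma that the paper states separately) in one stroke. The only caveat is that the argument as stated gives opposite \emph{residues} only at simple poles of the integrand, which is the generic situation relevant here.
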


\begin{proof}

Beginning with the formula for $\mathcal{R}_1^-$ given in~\eqref{r_1plus}, set $w=0$ and apply the Gauss duplication formula to obtain
\[
  \mathcal{R}_1^-(s,0) =-\frac{(4\pi)^\frac{k}{2}}{2} \cdot\frac{\Gamma(s-\frac{1}{2})\pi^{s-\frac{3}{2}} \left\langle \mathcal{V}, E_0(\cdot, 2-\overline{s})\right\rangle}{2\Gamma(s)\Gamma(s+\frac{k}{2}-1)}.
\]
Let $\vec{E}(z, s) = \left(E_\mathfrak{a}(z,s)\right)_\mathfrak{a}$.
Following Iwaniec~\cite{Iwaniec02}, we have $\vec{E}(z, s)= \Phi(s) \vec{E}(z, 1-s)$, in which $\Phi(s)$ is the symmetric scattering matrix
\begin{equation}\label{eq:scatter}
\Phi(s) = \pi^\frac{1}{2} \frac{\Gamma(s-\frac{1}{2})}{\Gamma(s)} \big( \varphi_{\mathfrak{ab}0}(s) \big)_{\mathfrak{a},\mathfrak{b}}
\end{equation}
composed of the constant Fourier coefficients of the various Eisenstein series $E_\mathfrak{a}(\sigma_\mathfrak{b} z,s)$.
In particular, we have that
\begin{align*}
  E_0(z,s) &= \frac{\sqrt{\pi}\Gamma(s-\frac{1}{2})}{\Gamma(s)\zeta^{(2)}(2s)}\bigg(\frac{\zeta^{(2)}(2s-1)E_\infty(z,1-s)}{4^s} \\
  &\qquad + \frac{\zeta^{(2)}(2s-1)E_\frac{1}{2}(z,1-s)}{4^s}+ \frac{\zeta(2s-1)E_0(z,1-s)}{2^{4s-1}}\bigg).
\end{align*}
We apply the Gauss duplication formula and the functional equations of $E_0(z,s)$ and the Riemann zeta function to transform $\mathcal{R}_1^-$ into
\begin{align*}
  - &\frac{(4\pi)^\frac{k}{2}\Gamma(2s-2)\pi^{2-s}\zeta(2s-2)}{2\Gamma(s)\Gamma(s+\frac{k}{2}-1)\Gamma(2-s)\zeta^{(2)}(4-2s)} \times \bigg( \frac{\langle \mathcal{V},E_0(\cdot, \overline{s}-1)\rangle}{2^{5-2s}} \\
  &\hspace{2 mm} +\frac{(4^{3-2s}-2^{3-2s})\langle \mathcal{V},E_\infty(\cdot, \overline{s}-1)\rangle}{2^{8-4s}}+\frac{(2^{3-2s}-1)\langle \mathcal{V},E_\frac{1}{2}(\cdot, \overline{s}-1)\rangle}{2^{5-2s}}\bigg).
\end{align*}
We compute the residue of $-\mathcal{R}_1^+$ given in \eqref{eq:R_1_plus_def} as we did for \eqref{r_1plus} for $\mathcal{R}_1^-$, although this time none of the cuspidal contributions vanish. Then after replacing the zeta functions with the expansions given in \eqref{eisenstein_coefficient_dirichlet_series}, term-by-term comparison shows $\mathcal{R}_1^-$ is equal to $-\mathcal{R}_1^+$.
\end{proof}

The contribution from $\mathcal{R}_1^+(s,0)-\mathcal{R}_1^{-}(s,0)$, written with arguments as they appear within the term $2Z_k(s+\frac{k}{2}+1,0)$, thus takes the form
\[
  4\mathcal{R}_1^+(s+\tfrac{k}{2}+1,0) =
 (4\pi)^\frac{k}{2}  \frac{\Gamma(s+\frac{k}{2}+\frac{1}{2}) \pi^{s+\frac{k-1}{2}} \langle \mathcal{V}, E_0(\cdot,1-\frac{k}{2}-\overline{s}) \rangle}{\Gamma(s+\frac{k}{2}+1)\Gamma(s+k)}.
 \]
This term has infinitely many poles (at least, when $k$ is even), of which at most two lie in the right half-plane $\Re s > -\frac{k+3}{2}$.  There is a pole at $s= -\frac{k}{2}$ coming from the Eisenstein series, with residue
\[
  \Res_{s = -\frac{k}{2}} 4\mathcal{R}_1^+(s+\tfrac{k}{2}+1,0) =
  -\frac{(4\pi)^\frac{k}{2} }{\Gamma(\frac{k}{2})} \Res_{s=1} \langle \mathcal{V}, E_0(\cdot, \overline{s}) \rangle.
\]

A second pole appears at $s=1-k$ from the inner product (although not from the Eisenstein series), which is relevant to our study in the cases $k \leq 4$.
In the case $k=4$, the pole at $s=1-k$ in the inner product is cancelled by a zero in $\Gamma(s+\frac{k}{2}+1)^{-1}$, and does not appear.
In the remaining case, $k=3$, this pole collides with a pole at $s=-\frac{k+1}{2}$ coming from the gamma factor, creating a double pole with principal part
\[
  -\frac{\pi^2}{3\zeta^{(2)}(3)(s+2)^2} + \frac{24a_0\zeta^{(2)}(3)-\pi^2\gamma-\pi^2\log(4\pi)}{3\zeta^{(2)}(3)(s+2)},
\]
in which $\gamma$ is the Euler-Mascheroni constant and $a_0$ is the constant coefficient of the Laurent expansion of $\langle \mathcal{V},E_0(\cdot, \overline{s})\rangle$ about $s=\frac{3}{2}$.

For $k\geq 4$, the gamma factor pole at $s=-\frac{k+1}{2}$ is simple, with residue
\[
  \Res_{s = -\frac{k+1}{2}} 4\mathcal{R}_1^+(s+\tfrac{k}{2}+1,0) =
  (4\pi)^\frac{k}{2} \frac{\langle \mathcal{V}, E_0(\cdot,\frac{3}{2}) \rangle}{\pi^\frac{3}{2}\Gamma(\frac{k-1}{2})}.
\]

Further analogy with~\cite{HulseKuanLowryDudaWalker17} leads us to expect that $\mathcal{R}_0^+(s,0) = -\mathcal{R}_0^-(s,0)$ and that $2\mathcal{R}_0^+(s,0)$ shows significant cancellation with the diagonal term.
A computation very similar to that performed in Lemma~\ref{lem:R1_antisymmetry} shows that this is indeed the case.

\begin{lemma}
  With the notation of \S\ref{sec:mero}, we have
  \begin{equation*}
    \mathcal{R}_0^+(s,0) = -\mathcal{R}_0^-(s,0).
  \end{equation*}
\end{lemma}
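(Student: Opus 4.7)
The proof strategy parallels that of Lemma~\ref{lem:R1_antisymmetry}. First, I would compute $\mathcal{R}_0^{\pm}(s,0)$ explicitly by taking the respective residues. Unlike the $\mathcal{R}_1^{\pm}$ case, here the poles come from the gamma factors in the numerator of $G(s,z)$ rather than from the zeta factors in $\zeta_\mathfrak{a}(s+w,z)$, so there is no automatic vanishing at the $\infty$ and $\tfrac{1}{2}$ cusps and all three cusps of $\Gamma_0(4)$ contribute. At $z=s-\tfrac{1}{2}$ the residue of $\Gamma(s-\tfrac{1}{2}-z)$ contributes a factor of $-1$ and the inner product becomes $\langle \mathcal{V}, E_\mathfrak{a}(\cdot, 1-\overline{s})\rangle$, while at $z=\tfrac{1}{2}-s$ the residue of $\Gamma(s-\tfrac{1}{2}+z)$ contributes $+1$ and the inner product becomes $\langle \mathcal{V}, E_\mathfrak{a}(\cdot, \overline{s})\rangle$. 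In each case I would apply the Gauss duplication formula to collapse $\Gamma(s-\tfrac{1}{2}\pm z)$ at the polar location into $\Gamma(2s-1)$ and repackage the remaining gamma quotients.

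Next, following the approach taken for $\mathcal{R}_1^{-}$ in Lemma~\ref{lem:R1_antisymmetry}, I would apply the scattering matrix functional equation $\vec{E}(z,1-s) = \Phi(1-s)\vec{E}(z,s)$, where $\Phi(s)$ is the symmetric matrix in~\eqref{eq:scatter}, to rewrite every $E_\mathfrak{a}(\cdot, 1-\overline{s})$ appearing in $\mathcal{R}_0^{-}(s,0)$ as a linear combination of $E_\mathfrak{b}(\cdot, \overline{s})$ over the three cusps $0, \tfrac{1}{2}, \infty$. Simultaneously, I would apply the functional equation of the Riemann zeta function (and the corresponding identity for $\zeta^{(2)}$) to the values $\zeta_\mathfrak{a}(s, s-\tfrac{1}{2})$ arising in $\mathcal{R}_0^{-}(s,0)$, using the explicit formulas for the Eisenstein coefficient Dirichlet series in~\eqref{eisenstein_coefficient_dirichlet_series}, to bring them into the same shape as the $\zeta_\mathfrak{a}(s,\tfrac{1}{2}-s)$ values that appear in $\mathcal{R}_0^{+}(s,0)$.

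Finally, I would perform a term-by-term comparison of the two resulting expressions across the three cusps and the nine scattering coefficients, exactly as at the end of the proof of Lemma~\ref{lem:R1_antisymmetry}. The $\Gamma(2s-1)/(\Gamma(s)\Gamma(s+\tfrac{k}{2}-1))$ and $\pi$-power prefactors will match on both sides, leaving the identification of each $2$-power and zeta-factor arising from the scattering matrix entries and the $\zeta_\mathfrak{a}$ expressions.

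The main obstacle is the same as in Lemma~\ref{lem:R1_antisymmetry}: because the level $4$ is not square-free, the scattering matrix $\Phi(s)$ has nontrivial off-diagonal entries, so each $E_\mathfrak{a}(\cdot, 1-\overline{s})$ contributes to all three cusps and the bookkeeping of $2$-power factors from $\zeta^{(2)}(2s)$, $\zeta^{(2)}(2s-1)$, and the various $4^s$ and $2^{4s-1}$ factors in~\eqref{eisenstein_coefficient_dirichlet_series} becomes delicate. The compensating fact is that the nine cross terms group into matching pairs after applying the functional equations, yielding the clean antisymmetry $\mathcal{R}_0^{+}(s,0) = -\mathcal{R}_0^{-}(s,0)$.
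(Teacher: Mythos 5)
Your proposal is correct and follows essentially the same route the paper intends: the paper's own proof is just the remark that the computation is ``very similar'' to that of Lemma~\ref{lem:R1_antisymmetry}, and you have correctly identified the only substantive differences (the poles now come from the gamma factors of $G(s,z)$ at $z=\pm(s-\tfrac{1}{2})$, with residues $\pm 1$ and all three cusps contributing) before running the same scattering-matrix and zeta functional-equation comparison. One cosmetic correction: evaluating the surviving gamma factor at the polar location gives $\Gamma(2s-1)$ directly, and the duplication formula is then used to rewrite $\Gamma(2s-1)$ in terms of $\Gamma(s-\tfrac{1}{2})\Gamma(s)$ so that it cancels against the denominator of $G(s,z)$, not to produce $\Gamma(2s-1)$ in the first place.
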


Simplifying $2\mathcal{R}_0^+(s,0)$ gives
\[
  2\mathcal{R}_0^+(s,0) = - \frac{1}{2} \cdot \frac{(4\pi)^{s+\frac{k}{2}-1}}{\Gamma(s+\frac{k}{2}-1)} \left\langle \mathcal{V},E_\infty(\cdot, \overline{s}) \right\rangle.
\]
As in \S\ref{sec:diagonal_part}, Zagier regularization identifies this expression with a Rankin--Selberg $L$-function,
\[
  \mathcal{R}_0^+ - \mathcal{R}_0^- =-\frac{1}{2}\frac{L(s,\theta^k \times \theta^k)}{\zeta(2s)},
\]
and we conclude that the second residual pair in the meromorphic continuation of $2Z_k(s+\frac{k}{2}+1,0)$ \textbf{\emph{exactly cancels}} with the diagonal part.  This cancellation can only occur in the half-plane $\Re s < -\frac{k+1}{2}$ and allows us to ignore $\mathcal{R}_0^\pm$ as soon as it appears.

\subsection{Non-Spectral Part}

We conclude this section with a few remarks on the polar behavior of the non-spectral part.
As it appears in $2 Z_k(s+\frac{k}{2}+1,0)$, this term takes the form
\begin{equation}\label{eq:mathfrak_E_def}
  \mathfrak{E}_k(s) = \frac{2\pi^k \Gamma(s+1)\zeta(s+1)\zeta(s+k)}{\Gamma(\frac{k}{2})\Gamma(s+\frac{k}{2}+1)\zeta^{(2)}(k)} \left(1+ \frac{1}{2^{2s+k}} - \frac{1}{2^{s+k-1}}\right).
\end{equation}
This expression is analytic in the region $\Re s > 0$ and extends meromorphically to all of $\mathbb{C}$ with poles $s=0$ and $s=-1$.  Potential poles at negative odd integers $\leq -3$ are cancelled by trivial zeta zeros, while the existence of the poles at negative even integers depends on $k$.

When $k$ is odd, $\mathfrak{E}_k(s)$ has poles at negative even integers and a double pole at $s=1-k$ coming from $\Gamma(s+1)\zeta(s+k)$.  When $k$ is even, zeros from $\zeta(s+1)\zeta(s+k)/\Gamma(s+\frac{k}{2}+1)$ cancel all but $\lfloor \frac{k}{4} \rfloor$ of these additional poles, leaving only poles at $0$, $-1$, and each negative even integer greater than $-1-\frac{k}{2}$.

We compute the residue at $s=-1$ to be
\begin{equation*}
  \Res_{s=-1} \mathfrak{E}_k(s) =  -\frac{\pi^k\zeta(k-1)}{\zeta^{(2)}(k)\Gamma(\frac{k}{2})^2},
\end{equation*}
which perfectly cancels the corresponding pole from the diagonal part in~\eqref{diagonal_rightmost_res}.
This completes the proof of Theorem~\ref{theorem:Wks_mero}. \hfill \qed

\section{Analysis of $D(s, P_k \times P_k)$}\label{sec:AnalysisD_PkPk}

We now analyze $D(s, P_k \times P_k)$.
Through the decomposition in~\eqref{eq:DsPkSquared_equals_DsSkSquared}, we relate $D(s, P_k \times P_k)$ to $D(s, S_k \times S_k)$, which further decomposes in terms of $W_k(s)$ from~\eqref{s_basic_decomp}.
Building on the analysis from the previous sections, we will show surprising amounts of cancellation in the poles and residues of $D(s, P_k \times P_k)$.

It is helpful to combine the two decompositions~\eqref{eq:DsPkSquared_equals_DsSkSquared} and~\eqref{s_basic_decomp} into the following unified formula for $D(s,P_k \times P_k)$:
\begin{align}
  &D(s, P_k \times P_k) = \zeta(s+k-2)+ W_k(s-2)+V_k^2 \zeta(s-2) \label{line1:pk_decomposition}\\
  &\qquad -  2V_k\zeta(s+\tfrac{k}{2}-2) -2V_k L(s-1,\theta^k) \label{line2:pk_decomposition} \\
  &\qquad + \frac{1}{2\pi i} \int_{(\sigma)} W_k(s-2-z) \zeta(z) \frac{\Gamma(z)\Gamma(s+k-2-z)}{\Gamma(s+k-2)}dz \label{line3:pk_decomposition} \\
  &\qquad - \frac{2V_k}{2\pi i} \int_{(\sigma)} L(s-1-z,\theta^k)\zeta(z) \frac{\Gamma(z)\Gamma(s+\frac{k}{2}-2-z)}{\Gamma(s+\frac{k}{2}-2)}dz, \label{line4:pk_decomposition}
\end{align}
initially valid with $\Re s \gg 1$ and $\sigma \in (1,\Re s -3)$.

Since the discrete part of $W_k(s-3)$ has a line of poles where $\Re s = \frac{3-k}{2}$, we necessarily restrict our analysis of $D(s,P_k \times P_k)$ to the half-plane $\Re s > \frac{3-k}{2}$.
For ease of exposition, we further restrict ourselves to the half-plane $\Re s > 0$.

We investigate the analytic properties of $D(s, P_k \times P_k)$ by expounding each part of the decomposition given in~\eqref{line1:pk_decomposition}--\eqref{line4:pk_decomposition}.
For easy reference, a summary of the locations and residues of the poles of $D(s, P_k \times P_k)$ in the half-plane $\Re s > 0$ is provided in Table~\ref{table}.

\begin{table}[t]
\caption{Summary of Polar Data of $D(s,P_k \times P_k)$ in the Half-Plane $\Re s > 0$}\label{table}
\small
\renewcommand{\arraystretch}{1.5}
\begin{threeparttable}
\begin{tabular}{llll}
  \toprule

  \textsc{pole location} & \textsc{line} & \textsc{contributing term} & \textsc{residue} \\

  \midrule

  $s = 3$ &\eqref{line1:pk_decomposition} & $V_k^2\zeta(s-2)$ & $V_k^2$ \\
  $s = 3$ &\eqref{line3:pk_decomposition} & $\frac{\mathfrak{E}_k(s-3)}{s + k - 3}$, from $\frac{W_k(s-3)}{s + k - 3}$ & $V_k^2$ \\
  $s = 3$ &\eqref{line4:pk_decomposition} & $-2V_k \frac{L(s-2, \theta^k)}{s + \frac{k}{2} - 3}$ & $-2V_k^2$ \\

  \midrule

  $s = 2$ &\eqref{line1:pk_decomposition} & $\mathfrak{E}_k(s - 2)$, from $W_k(s-2)$ & $k V_k^2$ \\
  $s = 2$ &\eqref{line2:pk_decomposition} & $-2V_k L(s-1, \theta^k)$ & $-k V_k^2$ \\
  $s = 2$ &\eqref{line3:pk_decomposition} & $-\frac{\mathfrak{E}_k(s - 2)}{2}$, from $-\frac{W_k(s-2)}{2}$ & $-\frac{k}{2} V_k^2$ \\
  $s = 2$ &\eqref{line4:pk_decomposition} & $2 V_k \frac{L(s-1, \theta^k)}{2}$ & $\frac{k}{2}V_k^2$   \\

  \midrule

  $s = 3 - \frac{k}{2}$ &\eqref{line2:pk_decomposition} & $-2V_k \zeta(s+\frac{k}{2} - 2)$ & $-2V_k$ \\
  $s = 3 - \frac{k}{2}$ &\eqref{line4:pk_decomposition} & $-2 V_k \frac{L(s-2, \theta^k)}{s + \frac{k}{2} - 3}$ & $-2V_k L(1-\tfrac{k}{2},\theta^k)$   \\

  \midrule

  $s = 1$, if $k \neq 3$ &\eqref{line3:pk_decomposition} & $\frac{\mathfrak{E}_k(s-3)}{s + k - 3}$, from $\frac{W_k(s-3)}{s + k - 3}$ & $\frac{\pi^k \zeta(k-2)(1 + 2^{3-k})}{12\Gamma(\frac{k}{2})^2 \zeta^{(2)}(k)}$ \\
  $s = 1$ &\eqref{line3:pk_decomposition} & $\frac{\mathfrak{E}_k(s - 1)(s + k - 2)}{12}$ & $\frac{ V_k^2k(k-1)}{12}$ \\
  $s = 1$ &\eqref{line4:pk_decomposition} & $-2V_k \frac{L(s, \theta^k) (s + \frac{k}{2} - 2)}{12}$ & $-V_k \frac{\pi^{k/2}(\frac{k}{2} - 1)}{6\Gamma(k/2)}$\\
  $s = 4 - k$, if $k$ odd &\eqref{line3:pk_decomposition} & $\frac{\mathfrak{E}_k(s-3)}{s + k - 3}$, from $\frac{W_k(s-3)}{s + k - 3}$ & \text{double pole, see~\eqref{eq:doublepole_nonspectral_principalpart}} \\
  $s = 3 -\frac{k+1}{2}$, $k\neq 3$ &\eqref{line3:pk_decomposition} & $\frac{2\mathcal{R}_1^+(s+\frac{k}{2}-2,0)}{s + k - 3}$, from $\frac{W_k(s-3)}{s + k - 3}$ & $\frac{ (4\pi)^{\frac{k}{2}} \langle \mathcal{V}, E_0(\cdot, \frac{3}{2}) \rangle}{\pi^{3/2} \Gamma(\frac{k+1}{2})}$ \\
  $s = 3 -\frac{k+1}{2}$, $k=3$ &\eqref{line3:pk_decomposition} & $\frac{2\mathcal{R}_1^{+}(s+\frac{k}{2}-2,0)}{s + k - 3}$, from $\frac{W_k(s-3)}{s + k - 3}$ & \text{double pole, see~\eqref{eq:doublepole_eisen_principalpart}} \\

  \bottomrule
\end{tabular}
\begin{tablenotes}[para,flushleft]
  \centering
  {\footnotesize
    See Proposition~\ref{prop:dirichlet} for the definition of $W_k$,~\eqref{eq:R_1_plus_def} for $\mathcal{R}_1^+$, and~\eqref{eq:mathfrak_E_def} for  $\mathfrak{E}_k$.
  }
\end{tablenotes}
\end{threeparttable}
\end{table}

\subsection*{Polar terms}
We examine the poles from terms in~\eqref{line1:pk_decomposition}
and~\eqref{line2:pk_decomposition}.
The terms occurring in the first two lines include $W_k(s-2)$ and a collection
of functions of classical interest.
The poles and residues of these terms are therefore given by
Theorem~\ref{theorem:Wks_mero} or are otherwise well-known.

\subsection*{The $W_k(s)$}

We first look at the $W_k(s)$ integral in~\eqref{line3:pk_decomposition}.
To understand the integral, we shift $\sigma$ to $-3 + \epsilon$ for some small $\epsilon > 0$ and understand the resulting residues.
There are residues at $z = 1$ from $\zeta(z)$, and at $z = 0$ and $z = -1$ from $\Gamma(z)$.
By Cauchy's Theorem, the $W_k(s)$ integral in~\eqref{line3:pk_decomposition} is equal to
\begin{align*}
  \frac{1}{2\pi i} &\int_{(-3 + \epsilon)} W_k(s - 2 - z) \zeta(z) \frac{\Gamma(z) \Gamma(s + k - 2 - z)}{\Gamma(s + k - 2)} dz \\
  &\quad + \frac{W_k(s - 3)}{s + k - 3} - \frac{W_k(s - 2)}{2} + \frac{W_k(s - 1)(s+k-2)}{12}.
\end{align*}
The integrand is now analytic for $\Re s > -1+\epsilon$, and the poles from the $z$-residues can be interpreted using Theorem~\ref{theorem:Wks_mero}.

\subsection*{The $L(s, \theta^k)$}

We now examine the $L(s, \theta^k)$ integral in~\eqref{line4:pk_decomposition}.
As with the previous integral, we shift $\sigma$ to $-3 + \epsilon$ for some small $\epsilon > 0$ and understand the resulting residues.
By Cauchy's Theorem, the $L(s, \theta^k)$ integral in~\eqref{line4:pk_decomposition} is equal to
\begin{align*}
  &\frac{-2V_k}{2\pi i} \int_{(-3 + \epsilon)} L(s - 1 - z, \theta^k) \zeta(z) \frac{\Gamma(z) \Gamma(s + \frac{k}{2} - 2 - z)}{\Gamma(s + \frac{k}{2} - 2)} dz \\
  &\quad -2V_k \bigg( \frac{L(s - 2, \theta^k)}{s + \frac{k}{2} - 3} - \frac{L(s - 1, \theta^k)}{2} + \frac{L(s, \theta^k)(s + \frac{k}{2} - 2)}{12}\bigg).
\end{align*}
The integrand is analytic for $\Re s > -1+\epsilon$.
As $L(s, \theta^k)$ is analytic except for a simple pole at $s = 1$, it is easy to recognize the poles with $\Re s> 0$ in the expression above.
Note that there is an additional pole at $s = 3 - \frac{k}{2}$ coming from the denominator of $L(s-2, \theta^k) (s + \frac{k}{2} - 3)^{-1}$.

\subsection{Examination of Poles and their Cancellation}\label{sec:cancellation}

We now begin a polar analysis of $D(s, P_k \times P_k)$ in the half-plane $\Re s > 0$.
With reference to Table~\ref{table}, we see at once that the residues of $D(s,P_k \times P_k)$ at $s=3$ and $s=2$ both vanish, hence neither of these potential poles occur.



We now address the contribution of the poles at $s=3-\frac{k}{2}$, which are the rightmost potential poles in the $k=3$ case.
These poles occur in the terms $-2V_k \zeta(s + \frac{k}{2} - 2)$ and $L(s-2, \theta^k)(s + \frac{k}{2} - 3)^{-1}$, and combine to give the residue
\begin{equation*}
  \Res_{s=3 - \frac{k}{2}} \!\bigg(-2V_k \zeta(s + \tfrac{k}{2} - 2) + \frac{L(s-2, \theta^k)}{s + \tfrac{k}{2} - 3}\bigg) = -2V_k\big(1 + L(1 - \tfrac{k}{2},\theta^k)\big).
\end{equation*}
We evaluate $L(1 - \frac{k}{2}, \theta^k)$ using the functional equation of $L(s, \theta^k)$,
\begin{equation*}
  \pi^{-s-\frac{k}{2}+1} \Gamma(s+\tfrac{k}{2}-1) L(s,\theta^k)= \pi^{s-1}\Gamma(1-s)L(2-\tfrac{k}{2}-s,\theta^k),
\end{equation*}
and conclude that
\begin{equation*}
  L(1-\tfrac{k}{2},\theta^k) =\frac{\Gamma(\frac{k}{2})}{\pi^{\frac{k}{2}}} \lim_{s \to 0} \frac{L(1-s,\theta^k)}{\Gamma(s)} = -\frac{\Gamma(\frac{k}{2})}{\pi^{\frac{k}{2}}} \Res_{s=1} L(s,\theta^k) = -1.
\end{equation*}
Therefore, the residue at $s=3-\frac{k}{2}$ is exactly $0$, and so this pole also cancels.

There is a simple pole at $s = 3 - \frac{k+1}{2}$ in the case $k\geq 4$, with residue
\begin{align}\label{s_equals_3_minus_k2}
  \Res_{s = 3 - \frac{k+1}{2}} \frac{2\mathcal{R}_1^+(s+\frac{k}{2}-2,0)}{s+k-3} = \frac{(4\pi)^\frac{k}{2}}{\pi^\frac{3}{2} \Gamma(\frac{k+1}{2})} \left\langle \mathcal{V}, E_0(\cdot, \tfrac{3}{2}) \right\rangle.
\end{align}
When $k = 3$, this term is a double pole at $s=1$, with principal part
\begin{equation}\label{eq:doublepole_eisen_principalpart}
  -\frac{\pi^2}{3\zeta^{(2)}(3)(s-1)^2}+\frac{\pi^2(1-\gamma-\log(4\pi))}{3\zeta^{(2)}(3)(s-1)}+\frac{8a_0}{(s-1)},
\end{equation}
in which $a_0$ is the constant term in the Laurent series for the meromorphic continuation of $\langle \mathcal{V}, E_0(\cdot,\overline{s})\rangle$ at $s=\frac{3}{2}$.

In general, the poles at $s=1$ do not cancel, and constitute the leading polar term.
There are always simple poles coming from $\mathfrak{E}_k(s-1)(s+k-2)/12$ and $-2V_k L(s, \theta^k)(s+\frac{k}{2}-2)/12$, which jointly contribute the residue
\begin{equation*}
  \frac{1}{24} k^2 V_k^2.
\end{equation*}
There is also a pole at $s=1$ coming from $\mathfrak{E}_k(s-3)(s+k-3)^{-1}$, but the nature of this pole depends on $k$.
There are two cases.
If $k > 3$, there is a simple pole with residue
\begin{equation*}
 \frac{\pi^k \zeta(k-2)}{12\,\Gamma(\frac{k}{2})^2\zeta^{(2)}(k)}\left(1+2^{3-k}\right).
\end{equation*}
If $k = 3$, then there is a double pole with principal part
\begin{equation}\label{eq:doublepole_nonspectral_principalpart}
  \frac{2\pi^2}{3\zeta^{(2)}(3)(s-1)^2}+\frac{\pi^2(2\gamma + \log 2 -24\zeta'(-1))}{3\zeta^{(2)}(3)(s-1)},
\end{equation}
Altogether, the analysis of \S\ref{sec:cancellation} leads to the following theorem.

\begin{theorem}\label{theorem:DsPkPk}
  The Dirichlet series $D(s,P_k \times P_k)$, defined originally in the right half-plane $\Re s > 3$ by the series
  \[
    \sum_{m=1}^\infty \frac{P_k(m)^2}{m^{s+k-2}},
  \]
  has a meromorphic continuation to $\mathbb{C}$ given by~\eqref{line1:pk_decomposition}--~\eqref{line4:pk_decomposition} and is analytic in the right half-plane $\Re s > 1$, with a pole at $s=1$.
  In the case $k \geq 4$ this pole is simple, with residue
  \[
    \frac{k^2}{24} V_k^2 + \frac{\pi^k \zeta(k-2)}{12\,\Gamma(\frac{k}{2})^2\zeta^{(2)}(k)}\left(1+2^{3-k}\right).
  \]
  In the case $k=3$ this is a double pole, with principal part given by
 \begin{equation*}
   \frac{\pi^2}{3\zeta^{(2)}(3)(s-1)^2}+\frac{\pi^2\left(1+\gamma - \log (2\pi) -24\zeta'(-1)+2\zeta^{(2)}(3)\right)+24a_0\zeta^{(2)}(3)}{3\zeta^{(2)}(3)(s-1)}.
\end{equation*}
The function $D(s,P_k \times P_k)$ is otherwise analytic in the right half-plane $\Re s > \frac{3-k}{2}$ save for finitely many poles at non-positive integers and, for $k>3$, an additional simple pole at $s=\frac{5-k}{2}$ with residue given by~\eqref{s_equals_3_minus_k2}.
\end{theorem}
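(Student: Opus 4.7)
The plan is to prove Theorem~\ref{theorem:DsPkPk} by combining the four-line decomposition \eqref{line1:pk_decomposition}--\eqref{line4:pk_decomposition} with the meromorphic continuation of $W_k(s)$ from Theorem~\ref{theorem:Wks_mero}, shifting contours in the two Barnes-type integrals to expose all residues in the half-plane $\Re s > \tfrac{3-k}{2}$, and then verifying the cancellations summarized in Table~\ref{table}.

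First I would shift the $z$-contour in \eqref{line3:pk_decomposition} from $\Re z = \sigma$ leftward to $\Re z = -3+\epsilon$ for small $\epsilon>0$, crossing the simple pole of $\zeta(z)$ at $z=1$ and the simple poles of $\Gamma(z)$ at $z=0$ and $z=-1$. By Cauchy's theorem this produces the three explicit residual terms
\[
\frac{W_k(s-3)}{s+k-3} \;-\; \frac{W_k(s-2)}{2} \;+\; \frac{W_k(s-1)(s+k-2)}{12}
\]
plus a remainder integral that is analytic in $\Re s > -1+\epsilon$ once $W_k$ is interpreted via Theorem~\ref{theorem:Wks_mero}. Applying the same shift to \eqref{line4:pk_decomposition} produces the corresponding contributions $-2V_k L(s-2,\theta^k)/(s+\tfrac{k}{2}-3)$, $V_k L(s-1,\theta^k)$, and $-V_k L(s,\theta^k)(s+\tfrac{k}{2}-2)/6$, again plus an analytic integral. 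After these two shifts, every pole of $D(s,P_k\times P_k)$ in $\Re s > \tfrac{3-k}{2}$ is visible either in the residual terms or in the original contributions on lines \eqref{line1:pk_decomposition}--\eqref{line2:pk_decomposition}, whose polar data is either classical or prescribed by Theorem~\ref{theorem:Wks_mero} together with the explicit form of $\mathfrak{E}_k$ in \eqref{eq:mathfrak_E_def}.

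The next step is to walk through the rows of Table~\ref{table} and verify cancellations. At $s=3$ the three candidate residues $V_k^2$, $V_k^2$, and $-2V_k^2$ sum to zero, so this potential pole is illusory; the candidate pole at $s=2$ cancels by the analogous $(kV_k^2, -kV_k^2, -\tfrac{k}{2}V_k^2, \tfrac{k}{2}V_k^2)$ bookkeeping. The pole at $s=3-\tfrac{k}{2}$ cancels once one applies the functional equation of $L(s,\theta^k)$ (via the limiting argument $L(1-\tfrac{k}{2},\theta^k) = -\tfrac{\Gamma(k/2)}{\pi^{k/2}}\Res_{s=1}L(s,\theta^k) = -1$) to force $-2V_k + (-2V_k)L(1-\tfrac{k}{2},\theta^k) = 0$. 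These cancellations establish analyticity of $D(s,P_k\times P_k)$ on $\Re s > 1$.

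The main obstacle is the analysis at $s=1$ and, for $k > 3$, at $s=\tfrac{5-k}{2}$. For $k\geq 4$, the pole at $s=1$ is simple with three sources: $\mathfrak{E}_k(s-1)(s+k-2)/12$ and $-V_k L(s,\theta^k)(s+\tfrac{k}{2}-2)/6$ combine (using $V_k = \pi^{k/2}/\Gamma(\tfrac{k}{2}+1)$ and the standard residue of $L(s,\theta^k)$ at $s=1$) to give the $k^2V_k^2/24$ term, while $\mathfrak{E}_k(s-3)/(s+k-3)$ contributes the Eisenstein piece through the pole of $\mathfrak{E}_k$ at $-2$; adding these yields the stated residue. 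For $k=3$, two double poles coalesce at $s=1$: the factor $\mathfrak{E}_3(s-3)/(s+k-3)$ carries the principal part \eqref{eq:doublepole_nonspectral_principalpart}, and the $\mathcal{R}_1^+$ piece inside $W_3(s-3)/(s+k-3)$ contributes \eqref{eq:doublepole_eisen_principalpart}; expanding each of these, together with the two simple-pole contributions from the other rows of the $s=1$ block, to order $(s-1)^{-1}$ and summing gives the claimed principal part. This last step is the most error-prone arithmetic of the proof, since it requires tracking constant terms $a_0$, $\gamma$, $\log(2\pi)$, $\zeta'(-1)$, and $\zeta^{(2)}(3)$ simultaneously. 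Finally, for $k>3$ the one remaining pole in $\tfrac{3-k}{2} < \Re s \leq 1$ comes from $2\mathcal{R}_1^+(s+\tfrac{k}{2}-2,0)/(s+k-3)$ at $s = \tfrac{5-k}{2}$, and its residue is read directly off the $\Res_{s=-(k+1)/2}W_k(s)$ formula in Theorem~\ref{theorem:Wks_mero} (divided by the non-vanishing factor $(k-1)/2$) to match \eqref{s_equals_3_minus_k2}. The finiteness of the remaining poles at non-positive integers in any strip is automatic from the finitely many sources in $\mathfrak{E}_k$, $\Gamma$-factors, and the residual $\mathcal{R}_1^\pm$.
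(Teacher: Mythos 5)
Your proposal is correct and follows essentially the same route as the paper: shift the two Barnes integrals to $\Re z = -3+\epsilon$ to extract the residual terms at $z=1,0,-1$, then verify the cancellations at $s=3$, $s=2$, and $s=3-\tfrac{k}{2}$ (via $L(1-\tfrac{k}{2},\theta^k)=-1$) and assemble the surviving polar data at $s=1$ and $s=\tfrac{5-k}{2}$ exactly as in Table~\ref{table}. The arithmetic you flag as delicate (summing \eqref{eq:doublepole_nonspectral_principalpart}, \eqref{eq:doublepole_eisen_principalpart}, and the $k^2V_k^2/24$ contribution for $k=3$) does check out against the stated principal part.
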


\begin{remark}
  In the process of proving this theorem, we have also shown that $D(s, S_k \times S_k)$ has a meromorphic continuation to $\mathbb{C}$.
  The poles and residues of $D(s, S_k \times S_k)$ can be recovered from the analysis of $D(s, P_k \times P_k)$ and the decomposition~\eqref{eq:DsPkSquared_equals_DsSkSquared}.
\end{remark}

\begin{remark}
  The simple pole at $s = \frac{5-k}{2}$ is particularly interesting in the case $k=4$, when it appears in the right half-plane $\Re s > 0$.  In this case, noting that $r_4(m)/8$ is multiplicative and comparing Euler products shows that
\[
  \frac{1}{64}\sum_{m=1}^\infty \frac{r_4(m)^2}{m^s}
  =
  \frac{(2^{6-3s} - 5 \cdot 2^{3-2s} + 2^{1-s}+1)\zeta(s-2)\zeta^2(s-1)\zeta(s)}{(1+2^{1-s})\zeta(2s-2)},
\]
  which can be used to evaluate the inner product $\left\langle \mathcal{V}, E_0(\cdot, \frac{3}{2})\right\rangle$ appearing in~\eqref{s_equals_3_minus_k2} via~\eqref{diagonal_residue_s32}.
  The residue of $D(s, P_4 \times P_4)$ at $s=\frac{1}{2}$ is given by
  \[
    C_4'
    :=
    \frac{16(9\sqrt{2}-8)\zeta(\frac{1}{2})
    \zeta(\frac{3}{2})^2\zeta(\frac{5}{2})}{7 \pi^2 \zeta(3)}.
  \]
\end{remark}

\section{Smooth Second Moment}\label{sec:secondmoment}

In this section, we use the meromorphic properties of $D(s, P_k \times P_k)$ to prove our main smoothed result regarding estimates for $\sum P_k(n)^2 e^{-n/X}$.
Key to this approach is the exponential cutoff transform
\begin{equation}\label{eq:smooth_cutoff_DsPkPk}
  \frac{1}{2\pi i} \int_{(4)} D(s, P_k \times P_k) X^{s+k-2} \Gamma(s+k-2) \, ds = \sum_{n \geq 1} P_k(n)^2 e^{-n/X}.
\end{equation}

We may evaluate the left-hand side of the inverse Mellin transform in~\eqref{eq:smooth_cutoff_DsPkPk} by decomposing $D(s,P_k \times P_k)$ as in~\eqref{line1:pk_decomposition}--\eqref{line4:pk_decomposition} and then shifting the lines of integration from $\Re s = 4$ to $\Re s = \epsilon$.
From Theorem~\ref{theorem:DsPkPk}, we understand that these integration shifts pass by a pole at $s=1$ (which is a double pole for $k=3$) and a pole at $s =\frac{1}{2}$ (if $k=4$).

Provided that the integral in~\eqref{eq:smooth_cutoff_DsPkPk} converges away from poles on each abscissa $(\sigma)$ for $\sigma \in (0,4)$, we would have
\begin{align}
    &\sum_{n \geq 1} P_k(n)^2 e^{-n/X} = \delta_{[k=3]} C'_3  X^{k-1} \left(\log X +1-\gamma\right)+ C_k \Gamma(k-1)X^{k-1} \notag \\
    &+ \delta_{[k = 4]} \Gamma(\tfrac{5}{2}) C'_4 X^{k-\frac{3}{2}} + \frac{1}{2\pi i}\! \int_{(\epsilon)}\!\! D(s, P_k \times P_k) X^{s+k-2} \Gamma(s+k-2) ds. \label{eq:smooth_integral_shift}
\end{align}
Here, the constants $C_k$, $C_3'$, and $C_4'$ are given explicitly by the Laurent coefficients of $D(s,P_k \times P_k)$ about its singular points, as described in Remark~\ref{rem:smooth_coefficients} and Theorem~\ref{theorem:DsPkPk}.

Since $\Gamma(s)$ experiences exponential decay as $\lvert \Im s \rvert \to \infty$, it suffices to show that $D(s,P_k \times P_k)$ grows at most polynomially in $\lvert \Im s \rvert$.
We will accomplish this through a series of lemmas.

\begin{lemma}\label{lem:Wks_polynomial_growth}
  The function $W_k(s)$ is bounded polynomially in $\lvert \Im s \rvert$ away from poles in  vertical strips.
\end{lemma}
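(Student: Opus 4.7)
The plan is to bound each piece of the decomposition of $W_k(s)$ separately, following the structure assembled in Sections \ref{sec:meromorphic_Zk} and \ref{sec:Wk_analytic}. Recall that
\[
W_k(s) = \sum_{n\geq 1}\frac{r_k(n)^2}{n^{s+k}} + 2Z_k(s+\tfrac{k}{2}+1,0),
\]
and that via Proposition~\ref{prop:Zksw_spectral_decomposition}, $Z_k(s+\tfrac{k}{2}+1,0)$ splits into a non-spectral part, a discrete spectral sum, a continuous spectral integral, and finitely many residual terms $\mathcal{R}_1^\pm$, $\mathcal{R}_{-j}^\pm$ encountered when continuing past polar lines. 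I would handle these pieces in increasing order of difficulty.

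First, I would treat the easy pieces. The diagonal sum equals $L(s,\theta^k\times \theta^k)/\zeta(2s)$ up to gamma factors (Corollary~\ref{cor:diagonal_from_gupta}); convexity bounds on the Rankin–Selberg $L$-function together with standard subconvex lower bounds for $\zeta(2s)$ away from $\Re s = \tfrac{1}{2}$ give polynomial growth in $|\Im s|$ in vertical strips. The non-spectral part is an explicit product of gammas and zetas; Stirling's formula together with convexity bounds on $\zeta$ yields polynomial growth, with the vertical gamma ratio in $G$ producing only polynomial (in fact at worst power) growth once the exponential factors from numerator and denominator cancel. Each residual term $\mathcal{R}_1^\pm$ and $\mathcal{R}_{-j}^\pm$ is similarly an elementary product of gammas, zetas, and inner products $\langle \mathcal{V}, E_\mathfrak{a}(\cdot,\tfrac{1}{2}-\overline{z})\rangle$ evaluated at a specific $z$ depending linearly on $s$; these inner products have meromorphic continuations with polynomial growth in the spectral parameter (from the Rankin–Selberg theory of $|\theta^k|^2 y^{k/2}$), so the same bounds apply.

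The heart of the matter is the discrete and continuous spectral terms. For the discrete part
\[
\tfrac{(4\pi)^{k/2}}{2}\sum_j G(s+\tfrac{k}{2}+1,it_j)\, L(s+\tfrac{k}{2}+\tfrac{1}{2},\mu_j)\,\langle\mathcal{V},\mu_j\rangle,
\]
Stirling's formula applied to $G(s,it_j)$ gives exponential decay of the form $\exp(-\tfrac{\pi}{2}\max(0,|t_j|-|\Im s|))$, effectively restricting the sum to $|t_j|\lesssim |\Im s|$ and giving polynomial decay outside this window. On the effective range, I would combine a convexity bound $L(\sigma+i\tau,\mu_j)\ll (1+|\tau|+|t_j|)^A$ with a polynomial bound on $\langle \mathcal{V},\mu_j\rangle$; the latter follows from integration by parts against $\mu_j$ using the invariance of $\mathcal{V}$ under the Laplacian (each application brings out a factor $\tfrac{1}{4}+t_j^2$) combined with a uniform sup-norm bound on $\mathcal{V}$ over a fundamental domain. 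A Weyl-law count of the number of $\mu_j$ with $|t_j|\leq T$ as $O(T^2)$ then gives the desired polynomial bound. The continuous part is treated identically, replacing the sum over $j$ by an integral in $t$ and using the fact that $\langle\mathcal{V},E_\mathfrak{a}(\cdot,\tfrac{1}{2}+it)\rangle$ admits the same type of polynomial growth in $|t|$.

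The main obstacle is controlling $\langle \mathcal{V},\mu_j\rangle$ and $\langle \mathcal{V},E_\mathfrak{a}(\cdot,\tfrac{1}{2}+it)\rangle$ uniformly and polynomially in the spectral parameter; this is where the non-cuspidal nature of $|\theta^k|^2 y^{k/2}$ matters. The subtraction of Eisenstein series in Lemma~\ref{lem:subtracting_eisenstein} makes $\mathcal{V}$ square-integrable and rapidly decaying at cusps, so the Laplacian integration-by-parts argument goes through without boundary contributions. A polynomial bound of the form $\langle \mathcal{V},\mu_j\rangle\ll (1+|t_j|)^{-N}$ for any $N$ (or any fixed polynomial bound sufficient to outweigh the Weyl-law growth) is standard once rapid decay at cusps has been established. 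Assembling these pieces yields the claimed polynomial growth of $W_k(s)$ in vertical strips avoiding poles.
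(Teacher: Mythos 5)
Your decomposition of the problem is the same as the paper's, and your treatments of the diagonal, non-spectral, residual, and continuous pieces are consistent with the paper's proof. The gap is in the discrete spectral part, and it is a real one.

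You propose to pair an integration-by-parts bound $\langle \mathcal{V},\mu_j\rangle\ll (1+|t_j|)^{-N}$ with a \emph{convexity} bound $L(\sigma+i\tau,\mu_j)\ll (1+|\tau|+|t_j|)^A$. These two bounds are not simultaneously available in the relevant normalization, and the discrepancy is exponential, not polynomial. The paper's $L(s,\mu_j)=\sum \rho_j(n)n^{-s}$ uses the Fourier coefficients of the $L^2$-normalized Maass form $\mu_j$, which is the normalization in which $\langle \mathcal{V},\mu_j\rangle$ makes sense as an inner product and in which integration by parts against the Laplacian yields the polynomial decay you invoke. But with that normalization, $\rho_j(1)$ grows like $e^{\pi|t_j|/2}$ (up to polynomial factors), so $L(s,\mu_j)$ grows \emph{exponentially} in $t_j$ on vertical lines; no convexity-type bound polynomial in the $t_j$-aspect holds for it. Conversely, if you switch to the Hecke-normalized $L$-function so that convexity does hold, the renormalization multiplies the inner product contribution by $e^{\pi|t_j|/2}$, and polynomial decay from integration by parts is no longer enough to make the spectral sum converge. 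Either way, $(1+|t_j|)^{-N}$ times $e^{\pi|t_j|/2}$ diverges, the gamma-factor $G(s+\tfrac{k}{2}+1,it_j)$ gives no exponential help for $|t_j|\lesssim|\Im s|$, and the Weyl-law count does not rescue the estimate.

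The paper resolves this by using a much stronger input: the result of Kiral (Proposition 13 of~\cite{Kiral15}) shows $\sum_{T\leq|t_j|\leq 2T}|\langle\mathcal{V},\mu_j\rangle|^2\ll T^{4k+2}e^{-\pi T}$, i.e.\ \emph{exponential} decay of the inner products. This exactly cancels the $e^{\pi|t_j|}$ growth of $|\rho_j(h)|^2$ (the estimate $\sum_{T\leq|t_j|\leq 2T}|\rho_j(h)|^2 e^{-\pi t_j}\ll h^{2\eta}T^2$ from~\cite{HoffsteinHulse13}), and only after this exponential cancellation does the paper reduce to polynomial estimates via Cauchy--Schwarz, a Phragm\'en--Lindel\"of argument for the averaged $L$-functions, and partial summation. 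This exponential pairing is not a technicality one can dodge by sharpening the polynomial decay exponent $N$; it is the mechanism that makes the discrete sum converge at all. To fix your argument you would need to either cite a result of Kiral-type or otherwise establish exponential decay of $\langle\mathcal{V},\mu_j\rangle$, and then track the matching exponential growth through the normalization of $L(s,\mu_j)$, rather than assert polynomial convexity.
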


\begin{proof}
  We prove this by showing that the diagonal, non-spectral, discrete, and continuous parts of $W_k(s)$ grow at most polynomially in $\lvert \Im s \rvert$.

For the diagonal part this is a consequence of the Phragm\'en-Lindel\"of principle and the existence of a functional equation to give bounds for $L(s,\theta^k \times \theta^k)$ in a left half-plane.
(See \S\ref{sec:diagonal_part}.)

For the non-spectral part $\mathfrak{E}_k(s)$, we obtain at most polynomial growth in $\lvert \Im s \rvert$ as a consequence of polynomial bounds on $\zeta(s)$ and Stirling's approximation for the gamma ratio $\Gamma(s+1)/\Gamma(s+\frac{k}{2}+1)$.

In the continuous part, we must address the growth of $\mathcal{R}_{-j}^\pm$ as well as the integral~\eqref{continuous_part_simplified}.
To bound
\[
  2\mathcal{R}_1^+(s+\tfrac{k}{2}+1,0) =
 (4\pi)^\frac{k}{2}  \frac{\Gamma(s+\frac{k}{2}+\frac{1}{2}) \pi^{s+\frac{k-1}{2}} \langle \mathcal{V}, E_0(\cdot,1-\frac{k}{2}-\overline{s}) \rangle}{\Gamma(s+\frac{k}{2}+1)\Gamma(s+k)},
 \]
we recall that $\langle \mathcal{V}, E_0(\cdot,1-\frac{k}{2}-\overline{s}) \rangle$ may be identified with an $L$-function through Corollary~\ref{cor:diagonal_from_gupta} and therefore grows like a gamma function multiplied by an $L$-function of polynomial growth.
Via Stirling's approximation we see that the exponential contributions within $\mathcal{R}_1^\pm$ cancel, so $\mathcal{R}_1^\pm$ grows at most polynomially in $\lvert \Im s \rvert$.
Further terms $\mathcal{R}_{-j}^\pm$ may be treated in the same way.

To complete our analysis of the continuous part of $W_k(s)$ we need only estimate~\eqref{continuous_part_simplified} in various vertical strips. To do so, we note that $\langle \mathcal{V}, E_\mathfrak{a}(\cdot, \frac{1}{2}-\overline{z})\rangle/\Gamma(\frac{1}{2}+z)$ and $\zeta_\mathfrak{a}(s,z)$ experience at most polynomial growth in $\lvert \Im z \rvert$ and $\lvert \Im s \rvert$, and that Stirling's approximation gives
\begin{align*}
G(s,z) &=\frac{\Gamma(s-\frac{1}{2}+z)\Gamma(s-\frac{1}{2}-z)}{\Gamma(s+\frac{k}{2}-1)\Gamma(s)}  \\
&\ll \lvert \Im (s-z) \rvert^{\Re s} \lvert \Im (s+z) \rvert^{\Re s} \lvert \Im s \rvert^{A} e^{-\pi \max(\lvert \Im s \rvert, \lvert \Im z \rvert)+\pi \lvert \Im s \rvert}
\end{align*}
when $\Re z = 0$, for some constant $A$.

In the $z$-interval of length $2 \lvert \Im s \rvert^{1+\epsilon}$ where $\lvert \Im z \rvert < \lvert \Im s \rvert^{1+\epsilon}$, the exponential factors cancel and the integrand experiences polynomial growth in $\lvert \Im s\rvert$.
If $\lvert \Im z \rvert > \lvert \Im s \rvert^{1+\epsilon}$, the integrand decays exponentially.
In total, the integral contributes only polynomial growth.

Finally, we address the discrete part of $W_k(s)$.
For this,~\cite[Proposition 13]{Kiral15} shows that the inner products $\langle \mathcal{V},\mu_j \rangle$ decay exponentially in $\lvert t_j \rvert$; namely,
\[\sum_{T \leq \lvert t_j \rvert \leq  2T} \lvert \left\langle\mathcal{V},\mu_j \right\rangle \rvert^2 \ll T^{4k+2}e^{-\pi T}.\]
This exponential decay is balanced by exponential growth within the Fourier coefficients $\rho_j(h)$.
We have the estimate
\[\sum_{T \leq \lvert t_j \rvert \leq 2T} \lvert \rho_j(h) \rvert^2 e^{-\pi t_j} \ll h^{2\eta}T^2\]
given in~\cite[(4.3)]{HoffsteinHulse13}, where $\eta$ is the best-known progress toward the (non-archimedean) Ramanujan conjecture. Using this with the Cauchy-Schwarz inequality we get that
\[
\sum_{T \leq \lvert t_j \rvert \leq 2T} \lvert \rho_j(h) \rho_j(m) \rvert e^{-\pi t_j} \ll (hm)^{\eta}T^2.
\] So for $\Re(s)>2$, we have that
\begin{equation} \label{eq:lbound1}
\sum_{T \leq \lvert t_j \rvert \leq 2T} \lvert L(s,\mu_j) \rvert^2 e^{-\pi t_j} \ll T^2,
\end{equation}
and from Stirling's approximation and the functional equation, we similarly get that, when restricted to vertical strips $A<\Re(s)<-1$,
\begin{equation} \label{eq:lbound2}
\sum_{T \leq \lvert t_j \rvert \leq 2T} \lvert L(s,\mu_j) \rvert^2 e^{-\pi t_j} \ll_A T^{4-2\sigma}(1+|\Im(s)|)^{2-2\sigma}.
\end{equation}
Since each $L(s,\mu_j)$ is entire, from the Phragm\'en-Lindel\"{o}f convexity argument we have that $\sum_{T \leq \lvert t_j \rvert \leq 2T} \lvert L(s,\mu_j) \rvert^2 e^{-\pi t_j}$ has at most polynomial growth in $T$ and $\Im(s)$ when $s$ is confined to any vertical strip in $\mathbb{C}$.
Using this along with our previous bound on $G(s,z)$,
we bound the discrete part of $W_k(s)$ polynomially in $\lvert \Im s \rvert$ via partial summation.
\end{proof}

A second lemma will be used to bound the growth of the two Mellin-Barnes integrals~\eqref{line3:pk_decomposition} and~\eqref{line4:pk_decomposition} that appear in the meromorphic continuation of $D(s, P_k \times P_k)$.

\begin{lemma}\label{lem:MBtransform_polynomial_growth}
  Let $F(s)$ be a function of polynomial growth in $\lvert \Im s \rvert$ on fixed vertical lines and let $c$ be fixed. There exists $M>0$ such that
  \[
    \frac{1}{2\pi i} \int_{(\sigma)} F(s-z) \zeta(z) \frac{\Gamma(z) \Gamma(s+c-z)}{\Gamma(s+c)} \, dz \ll \lvert \Im s \rvert^M,
  \]
in which $\sigma$ is chosen to avoid poles in the integrand and the implicit constant does not depend on $\lvert \Im s \rvert$.
\end{lemma}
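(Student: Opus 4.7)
The plan is to use Stirling's approximation on the gamma ratio to identify a narrow region in the integration variable where the integrand is not exponentially small, and then bound the contribution of that region polynomially using the hypothesized growth of $F$ together with classical bounds on $\zeta$.

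First I would write $z=\sigma+it$ and $s=\sigma_s+iT$. Applying Stirling's asymptotic $|\Gamma(a+ib)|\sim\sqrt{2\pi}\,|b|^{a-1/2}e^{-\pi|b|/2}$ to each of the three gamma factors produces, on the line $\Re z=\sigma$, a bound of the shape
\[
\left|\frac{\Gamma(z)\Gamma(s+c-z)}{\Gamma(s+c)}\right| \ll (1+|t|)^{A_1}(1+|T-t|)^{A_2}(1+|T|)^{A_3}\exp\!\left(-\tfrac{\pi}{2}\bigl(|t|+|T-t|-|T|\bigr)\right),
\]
where $A_1,A_2,A_3$ depend only on $\sigma$, $\Re s$, and $c$. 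The triangle inequality $|t|+|T-t|\geq |T|$ shows the exponential factor is always $\leq 1$, with equality only when $t$ lies between $0$ and $T$ on the real axis; outside this interval the same inequality yields $|t|+|T-t|-|T|\geq\min(|t|,|t-T|)$, giving genuine exponential decay away from the critical strip.

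Next I would split the contour into a bulk piece $|t|\leq 2(1+|T|)$ and a tail piece $|t|>2(1+|T|)$. On the tail, the resulting factor $\exp(-\pi|t|/4)$ (say) absorbs the polynomial growth of $\zeta(z)$ and of $F(s-z)$, so that part of the integral is absolutely $O(1)$. On the bulk, the exponential factor is bounded by $1$, and combining the hypothesized estimate $|F(s-z)|\ll(1+|T-t|)^B$ (which is exactly polynomial growth of $F$ on the fixed line $\Re w=\Re s-\sigma$) with the classical convexity bound $|\zeta(\sigma+it)|\ll(1+|t|)^C$ yields
\[
\int_{|t|\leq 2(1+|T|)}(1+|t|)^{A_1+C}(1+|T-t|)^{A_2+B}(1+|T|)^{A_3}\,dt \ll (1+|T|)^M
\]
for some $M=M(A_1,A_2,A_3,B,C)$, by crude term-by-term estimation on the bulk (of length $\asymp(1+|T|)$).

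The main obstacle is not analytical but rather a matter of bookkeeping: one must ensure that the Stirling estimate is uniform in $T$ as $t$ ranges, and that $\sigma$ remains a fixed positive distance from the poles at $z=1$ (from $\zeta$), at $z=0,-1,-2,\ldots$ (from $\Gamma(z)$), and at $z=s+c+\mathbb{Z}_{\geq 0}$ (from $\Gamma(s+c-z)$). The last family lies a bounded distance from the vertical line $\Re z=\sigma$ uniformly as $s$ varies in a vertical strip, so the hypothesis on $\sigma$ suffices. Combining the bulk and tail bounds then gives the desired polynomial estimate with implicit constant independent of $\Im s$.
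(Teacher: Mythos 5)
Your proposal is correct and follows essentially the same route as the paper's proof: Stirling's approximation yields the exponential factor $e^{-\frac{\pi}{2}(\lvert \Im z\rvert + \lvert\Im(s-z)\rvert - \lvert\Im s\rvert)}$, one observes that this cancels only on the bulk region $\lvert\Im z\rvert \lesssim \lvert\Im s\rvert$ and decays exponentially on the tail, and the bulk contribution is bounded polynomially using the polynomial growth of $F$ and $\zeta$ over an interval of length $O(\lvert\Im s\rvert)$. Your extra bookkeeping about uniformity of Stirling and the location of the poles of $\Gamma(s+c-z)$ is a harmless (and welcome) elaboration of details the paper leaves implicit.
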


\begin{proof}
  By Stirling's approximation and polynomial growth in vertical strips for both $F(s-z)$ and $\zeta(z)$, we bound our integrand by
  \[
    \lvert \Im(s-z) \rvert^{A} \lvert \Im z \rvert^{B} \lvert \Im s \rvert^{C }e^{-\frac{\pi}{2} \lvert \Im z \rvert - \frac{\pi}{2} \lvert \Im(s-z) \rvert + \frac{\pi}{2} \lvert \Im s\rvert}
  \]
  for some $A,B,C$ independent of $\lvert \Im s \rvert$ and $\lvert \Im z \rvert$.
  Growth and decay of the integrand depends on the relative sizes of $\Im s$, $\Im z$, and $\Im(s-z)$.  By casework we conclude that the integrand has exponential decay in $\lvert \Im z \rvert$ everywhere except when $\lvert \Im z \rvert \leq \lvert \Im s \rvert$, in which case the exponentials cancel.
  Thus the integrand is polynomially bounded and effectively supported on an interval of length $2\lvert \Im s \rvert^{1 + \epsilon}$, leading to a polynomial bound in $\lvert \Im s\rvert$ overall.
\end{proof}

Combining our lemmas, we bound $D(s, P_k \times P_k)$ in vertical strips and prove the following theorem.
 \begin{theorem}\label{thm:smooth_cutoff}
  For $k \geq 3$ and any $\epsilon > 0$,
  \begin{align*}
      \sum_{n = 1}^\infty P_k(n)^2 e^{-n/X} &= \delta_{[k=3]} C'_3  X^{k-1} \left(\log X +1-\gamma\right)+ C_k \Gamma(k-1)X^{k-1}  \\
      &\quad + \delta_{[k = 4]} C'_4 \Gamma(k-\tfrac{3}{2})X^{k - \frac{3}{2}} + O_\epsilon(X^{k-2 + \epsilon}),
  \end{align*}
  where $C_k$, $C_3'$, and $C_4'$ are the explicit constants described in Remark~\ref{rem:smooth_coefficients}.
\end{theorem}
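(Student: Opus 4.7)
The plan is to start from the inverse Mellin transform identity \eqref{eq:smooth_cutoff_DsPkPk}, which follows from $e^{-y} = \frac{1}{2\pi i}\int_{(c)} y^{-s}\Gamma(s)\,ds$ for $c > 0$ together with absolute convergence of $D(s, P_k \times P_k)$ on $\Re s = 4$. This rewrites the smoothed sum as a contour integral whose integrand is an explicit meromorphic function times $X^{s+k-2}\Gamma(s+k-2)$.

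Next I would shift the contour from $\Re s = 4$ to $\Re s = \epsilon$ for small $\epsilon > 0$. By Theorem~\ref{theorem:DsPkPk} combined with the cancellation analysis in \S\ref{sec:cancellation} (summarised in Table~\ref{table}), the only surviving poles of $D(s, P_k \times P_k)$ in the strip $0 < \Re s < 4$ are at $s = 1$ (double when $k=3$, simple when $k \geq 4$) and, in the exceptional case $k = 4$, an extra simple pole at $s = \tfrac{1}{2}$. The apparent poles at $s = 3$, $s = 2$, and $s = 3 - \tfrac{k}{2}$ all cancel term by term across \eqref{line1:pk_decomposition}--\eqref{line4:pk_decomposition}, and for $k \geq 5$ no other poles lie to the right of $\Re s = \epsilon$ provided $\epsilon$ is chosen to avoid the pole at $s = \tfrac{5-k}{2}$.

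Each surviving residue produces one of the asserted main terms. For $k \geq 4$, the simple pole at $s = 1$ has residue $C_k$ and contributes $C_k \Gamma(k-1) X^{k-1}$; when $k = 4$, the simple pole at $s = \tfrac{1}{2}$ with residue $C_4'$ contributes $C_4'\Gamma(k - \tfrac{3}{2})X^{k - 3/2}$. For $k = 3$ the double pole at $s = 1$ requires Taylor-expanding
$$X^{s+1}\Gamma(s+1) = X^2\bigl(1 + (s-1)(\log X + 1 - \gamma) + O((s-1)^2)\bigr),$$
using $\Gamma(2) = 1$ and $\Gamma'(2) = 1 - \gamma$; pairing this with the principal part of Theorem~\ref{theorem:DsPkPk} yields exactly $C_3' X^2(\log X + 1 - \gamma) + C_3\,\Gamma(2) X^2$, matching the statement.

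It remains to bound the shifted integral on $\Re s = \epsilon$ by $O(X^{k-2+\epsilon})$. Decomposing $D(s, P_k \times P_k)$ via \eqref{line1:pk_decomposition}--\eqref{line4:pk_decomposition}, Lemma~\ref{lem:Wks_polynomial_growth} controls the growth of $W_k$ and Lemma~\ref{lem:MBtransform_polynomial_growth} handles the Mellin--Barnes integrals; the exponential decay of $\Gamma(s + k - 2)$ on the vertical line (Stirling) then absorbs any polynomial blowup and delivers absolute convergence. The main obstacle is Lemma~\ref{lem:Wks_polynomial_growth} for the discrete spectral piece of $W_k(s)$: one must show that the Maass-form sum $\sum_j G(s, it_j) L(s + w - \tfrac{1}{2}, \mu_j)\langle \mathcal{V}, \mu_j\rangle$ grows only polynomially in $|\Im s|$. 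Here the exponential decay $e^{-\pi|t_j|/2}$ in $\langle \mathcal{V}, \mu_j\rangle$ (via Kiral's bound) must be delicately balanced against the exponential growth of $G(s, it_j)$ when $|t_j| \sim |\Im s|$, with the residual polynomial factors absorbed by mean-square bounds on $\rho_j(h)$ and Phragm\'en--Lindel\"of convexity for $L(s, \mu_j)$; the continuous-spectrum contribution is then treated analogously using the Zagier/Gupta regularised inner products $\langle \mathcal{V}, E_\mathfrak{a}\rangle$.
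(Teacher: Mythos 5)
Your proposal follows essentially the same route as the paper: the inverse Mellin (exponential cutoff) transform, a contour shift from $\Re s = 4$ to $\Re s = \epsilon$ picking up the residues at $s=1$ (double when $k=3$) and $s=\tfrac12$ (when $k=4$) as catalogued in Theorem~\ref{theorem:DsPkPk}, and the error bound via Lemmas~\ref{lem:Wks_polynomial_growth} and~\ref{lem:MBtransform_polynomial_growth} together with the exponential decay of $\Gamma(s+k-2)$ in vertical strips. Your residue computation at the double pole and your identification of the discrete spectral part of $W_k(s)$ as the delicate point in the growth estimate both match the paper's treatment.
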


\begin{proof}
  As described at the start of this section, it suffices to shift the line of integration as in~\eqref{eq:smooth_integral_shift}.
To justify this contour shift, we bound $D(s,P_k \times P_k)$ polynomially in $\lvert \Im s \rvert$ in vertical strips.
  We do so by showing a contribution of at most polynomial growth for each term in~\eqref{line1:pk_decomposition}--\eqref{line4:pk_decomposition}.

In~\eqref{line1:pk_decomposition} these bounds follow from Lemma~\ref{lem:Wks_polynomial_growth} and polynomial estimates for the Riemann zeta function in vertical strips.
For~\eqref{line2:pk_decomposition} we require a polynomial bound on $L(s,\theta^k)$ in vertical strips as well, which follows from the functional equation of $L(s,\theta^k)$ and the Phragm\'en-Lindel\"of Principle.
Finally, since $W_k(s)$ and $L(s,\theta^k)$ experience polynomial growth in vertical strips, Lemma~\ref{lem:MBtransform_polynomial_growth} gives a polynomial bound in $\lvert \Im s \rvert$ in~\eqref{line3:pk_decomposition} and~\eqref{line4:pk_decomposition}.
\end{proof}

\begin{remark} The leading constants $C_3'$ and $C_k$ ($k \geq 4$) are described explicitly in Remark~\ref{rem:smooth_coefficients}.
  In particular, we may verify that they are positive.

For small $k>3$ it is not difficult to list the precise locations of the poles of $D(s,P_k \times P_k)$ in the right half-plane $\Re s > \frac{3-k}{2}$ and derive additional main terms and improved error estimates in Theorem~\ref{thm:smooth_cutoff}.  For example, there exist constants $D_4$ and $D_5$ for which
\begin{align*}
\sum_{n \geq 1} P_4(n)^2e^{-n/X} &= 2C_4 X^3 + C_4'\Gamma(\tfrac{5}{2})X^\frac{5}{2}  + D_4X^2 +O\left(X^{\frac{3}{2}+\epsilon}\right),\\
\sum_{n \geq 1} P_5(n)^2 e^{-n/X} &= 6C_5 X^4 + D_5X^3 + O \left(X^{2+\epsilon}\right).
\end{align*}
The existence of infinitely many poles for $D(s,P_k \times P_k)$ on the line $\Re s = \frac{3-k}{2}$ suggests that these are essentially the best smooth results possible.
\end{remark}

\section{Sharp Second Moment}\label{sec:sharp_second_momentv2}

We now prove a second moment result without smoothing.
The key observation is that by Lemmas~\ref{lem:Wks_polynomial_growth}
and~\ref{lem:MBtransform_polynomial_growth}, the Dirichlet series
$D(s, P_k \times P_k)$ has polynomial growth in vertical strips (away from
poles).
Using this polynomial growth, applying Perron's formula yields a sharp moment.
In this section, we prove the following theorem.

\begin{theorem}\label{thm:sharp_cutoff_theoremv2}
  For each $k \geq 3$ there exists a $\lambda>0$ such that
  \begin{equation*}
    \sum_{n \leq X} P_k(n)^2
    =
    \delta_{[k=3]}X^{k-1} \left(\frac{C_3'}{2}\log X -\frac{C_3'}{4}\right)
    +
    \frac{C_k}{k-1} X^{k-1}
    +
    O_\lambda (X^{k-1-\lambda} ).
  \end{equation*}
  The constants $C_3'$ and $C_k$ are the same constants as in
  Remark~\ref{rem:smooth_coefficients}.
\end{theorem}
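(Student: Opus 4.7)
The plan is to approximate the sharp cutoff by smooth weights, apply Mellin inversion using the meromorphic continuation of $D(s, P_k \times P_k)$ provided by Theorem~\ref{theorem:DsPkPk}, and then sandwich to recover the sharp sum. The essential analytic input is the polynomial growth of $D(s, P_k \times P_k)$ in vertical strips established in Lemmas~\ref{lem:Wks_polynomial_growth} and~\ref{lem:MBtransform_polynomial_growth}.

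Fix $Y = X^\theta$ with $\theta \in (0,1)$ to be optimized. Let $\psi_\pm \colon [0,\infty) \to [0,1]$ be smooth with $\psi_- \leq \mathbf{1}_{[0,1]} \leq \psi_+$ pointwise, each agreeing with $\mathbf{1}_{[0,1]}$ off a transition region of length $1/Y$ around $1$. Integration by parts shows $\widetilde\psi_\pm(s) = 1/s + O(1/Y)$ for $s$ in compact sets, together with $\widetilde\psi_\pm(s) \ll_A |s|^{-1}(1 + |s|/Y)^{-A}$ uniformly for any $A \geq 0$. Mellin inversion then gives, for $\sigma_0 > 3$,
\begin{equation*}
  \sum_n P_k(n)^2 \psi_\pm(n/X) = \frac{1}{2\pi i} \int_{(\sigma_0)} D(s, P_k \times P_k)\, X^{s+k-2}\, \widetilde\psi_\pm(s+k-2)\, ds.
\end{equation*}

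Shift the contour to $\Re s = \sigma_1$, choosing $\sigma_1 = \epsilon$ when $k = 3$ and $\sigma_1 = \frac{5-k}{2} + \epsilon$ when $k \geq 4$. By Theorem~\ref{theorem:DsPkPk}, only the pole at $s = 1$ is crossed. For $k \geq 4$ this simple pole contributes $\frac{C_k}{k-1} X^{k-1}$, and for $k = 3$ the double pole yields $\frac{C_3'}{2} X^2 \log X - \frac{C_3'}{4} X^2 + \frac{C_3}{2} X^2$, each up to an error of size $O(X^{k-1}(\log X)/Y)$ arising from the $O(1/Y)$ discrepancy between $\widetilde\psi_\pm$ and $s^{-1}$. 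The shifted contour is bounded by $|D(s, P_k \times P_k)| \ll (1 + |\Im s|)^M$ (some finite $M$) together with the decay of $\widetilde\psi_\pm$, yielding a contribution of size $\ll X^{\sigma_1 + k - 2} Y^{M}$. Sandwiching $\psi_- \leq \mathbf{1}_{[0,X]} \leq \psi_+$ and balancing the two errors via $\theta = (1 - \sigma_1)/(M+1)$ produces $\lambda = (1 - \sigma_1)/(M+1) > 0$.

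The main obstacle is quantitative: the magnitude of $\lambda$ is controlled by the polynomial-growth exponent $M$ of $D(s, P_k \times P_k)$ in vertical strips. Lemma~\ref{lem:Wks_polynomial_growth} guarantees some finite $M$ and so yields the existence statement of the theorem. Sharpening $\lambda$ toward the value $\tfrac{1}{5} - \epsilon$ flagged in the Directions for Further Research would require more refined bounds on the Petersson inner products $\langle \lvert \theta \rvert^{2k} y^{k/2}, \mu_j \rangle$ that govern the discrete part of $W_k(s)$, and is beyond the scope of this argument.
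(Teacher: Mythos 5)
Your proof is correct and takes a genuinely different route from the paper's, though it leans on the same two analytic inputs: the meromorphic continuation of $D(s, P_k \times P_k)$ (Theorem~\ref{theorem:DsPkPk}) and its polynomial growth in vertical strips (Lemmas~\ref{lem:Wks_polynomial_growth},~\ref{lem:MBtransform_polynomial_growth}). The paper applies the truncated Perron formula directly, which produces a sharp-cutoff integral over $|\Im s| \le T$ together with a remainder $\mathcal{R}$ involving $\sum_{|n-X| \ll X/T} P_k(n)^2$; for $k > 3$ this remainder is controlled by pointwise bounds on $P_k(n)$, while for $k = 3$ those bounds are too weak and the paper must introduce the separate short-interval estimate of Lemma~\ref{lem:concentrating_cutoff_boundv2} (a Gaussian-smoothed contour bound plus Abel summation) to close the gap. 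Your use of a smooth majorant/minorant pair $\psi_\pm$ with Mellin transforms of super-polynomial decay eliminates the truncation entirely: the shifted contour integral converges absolutely, there are no horizontal segments to estimate, and the sandwich $\psi_- \le \mathbf{1}_{[0,1]} \le \psi_+$ (valid precisely because $P_k(n)^2 \ge 0$) replaces the remainder analysis in one stroke, so that all $k \ge 3$ are handled uniformly and the $k=3$ short-interval lemma is not needed. The resulting $\lambda$ is of the same qualitative order as the paper's, since both are ultimately governed by the exponent $M$ in the polynomial bound on $D(s, P_k \times P_k)$. One small inaccuracy: for $k \ge 6$ your line $\Re s = \frac{5-k}{2} + \epsilon$ lies to the left of $s=0$, so the shift may cross additional poles at non-positive integers permitted by Theorem~\ref{theorem:DsPkPk}; these contribute terms of size at most $X^{k-2+o(1)}$, which are absorbed in the error and do not affect the conclusion, but ``only the pole at $s=1$ is crossed'' should read ``the only pole contributing a main term is at $s=1$.''
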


Applying the statement of Perron's formula from Theorem~5.2 and Corollary~5.3
of~\cite{montgomery2006multiplicative} with $a_n = P_k(n)^2$ and
$\sigma_0 = k - 1 + \frac{1}{\log X}$, we find that
\begin{equation}\label{eq:perron_base}
  \sum_{n \leq X} P_k(n)^2
  =
  \frac{1}{2 \pi i} \int_{\sigma_0 - iT}^{\sigma_0 + iT}
  D(s - k + 2, P_k \times P_k) \frac{X^s}{s} ds
  +
  \mathcal R,
\end{equation}
where the remainder term is bounded by
\begin{equation}\label{eq:perron_remainder}
  \mathcal{R}
  \ll
  \sum_{\substack{X/2 < n < 2X \\ n \neq X}} P_k(n)^2
  \min \Big(1, \frac{X}{T\lvert X - n \rvert} \Big)
  +
  \frac{X^{\sigma_0}}{T}
  \sum_{n \geq 1} \frac{P_k(n)^2}{n^{\sigma_0}}.
\end{equation}

Shifting the line of integration in~\eqref{eq:perron_base} to
$k - 1 - \tfrac{1}{4}$ passes a pole at $s = k - 1$, and the residue gives the
main term in the Theorem.
There exists an $M$ such that $D(s,P_k \times P_k) s^{-1} \ll \lvert \Im s \rvert^M$
when $\Re s \geq (k-1-\frac{1}{4})$, and thus letting $T = X^\delta$ for a small
$\delta > 0$, the shifted integral (as well as the integrals
along the top and bottom of the rectangular contour) is bounded
by $O(X^{k - 5/4 + M \delta} + X^{k - 1 - \delta})$.

Now consider the remainder term $\mathcal{R}$.
The last term in the bound of $\mathcal{R}$ is itself bounded by $O_\epsilon(X^{k-1 - \delta +
\epsilon})$ for any $\epsilon > 0$.
For $k > 3$, the bound $P_k(n)^2 \ll n^{k-2} \log^{4/3} n$
(see \S2 of~\cite{IvicSurvey2006} for a survey of these results) is enough
to bound the first term by
$O_\epsilon(X^{k - 1 - \delta + \epsilon})$ for any $\epsilon > 0$.

When $k = 3$, individual bounds for $P_k(n)^2$ are too weak, but we can use
the following short interval estimate.

\begin{lemma}\label{lem:concentrating_cutoff_boundv2}
  There exists $M > 0$ such that
  \begin{equation*}
    \frac{1}{2\pi i}\int_{(k)} \!D(s-k+2,P_k \times P_k) X^s
    \exp\left( \frac{\pi s^2}{y^2}\right) \frac{ds}{y}
    \ll
    \frac{X^{k-1} \log X}{y} + X^{k-\frac{5}{4}} y^M.
  \end{equation*}
  Correspondingly, there exists $0 < \beta < 1$ such that
  \begin{equation*}
    \sum_{\lvert n - X \rvert \ll X^\beta} P_k(n)^2
    \ll
    X^{k - 2 + \beta}\log X.
  \end{equation*}
\end{lemma}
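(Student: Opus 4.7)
The plan is to deduce both parts of the lemma by shifting the contour to pick up a main term and then interpreting the integral as a Gaussian-smoothed sum of $P_k(n)^2$. For the integral bound, I would shift the $s$-contour from $\Re s = k$ down to $\Re s = k - \tfrac{5}{4}$. By Theorem~\ref{theorem:DsPkPk}, the only pole of $D(s-k+2, P_k\times P_k)$ in the intervening strip lies at $s = k-1$, corresponding to the pole of $D(\,\cdot\,, P_k \times P_k)$ at $1$. This pole is simple when $k \geq 4$ and double when $k = 3$, so a direct residue calculation against $X^s e^{\pi s^2/y^2}/y$ yields a contribution of size $O(X^{k-1}\log X/y)$, where the factor of $\log X$ arises in the $k=3$ case from differentiating $X^s$ at $s = k - 1$ and where the Gaussian factor $e^{\pi(k-1)^2/y^2}$ is $O(1)$ in the range of $y$ of interest.

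For the shifted integral on $\Re s = k - \tfrac{5}{4}$, I would apply Lemmas~\ref{lem:Wks_polynomial_growth} and~\ref{lem:MBtransform_polynomial_growth} together with the decomposition~\eqref{line1:pk_decomposition}--\eqref{line4:pk_decomposition} to obtain a polynomial growth bound of the form $|D(s-k+2, P_k \times P_k)| \ll (1 + |\Im s|)^M$ for some fixed $M$. Against this, the Gaussian decay $|e^{\pi s^2/y^2}| \ll e^{-\pi t^2/y^2}$ effectively localizes the integral to $|\Im s| \ll y$, and after the rescaling $t = yu$ one obtains a bound of $O(X^{k - 5/4} y^M)$, completing the integral estimate.

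To pass to the short-interval bound I would unfold the Dirichlet series inside the integral and evaluate the inner Mellin integral by completing the square, giving the identity
\[
\frac{1}{2\pi i}\int_{(k)} (X/n)^s e^{\pi s^2/y^2}\,\frac{ds}{y} = \frac{1}{2\pi} e^{-y^2 \log^2(X/n)/(4\pi)}.
\]
This is a positive Gaussian bump centred at $n = X$ with width $\asymp X/y$. Since $P_k(n)^2 \geq 0$, the bump is bounded below by a positive constant on the range $|n - X| \ll X/y$, and positivity yields $\sum_{|n-X| \ll X/y} P_k(n)^2 \ll I$. Setting $y = X^{1-\beta}$ converts the integral bound into $X^{k-2+\beta}\log X + X^{k - 5/4 + M(1-\beta)}$, and the second term is absorbed once $\beta \geq (M + \tfrac{3}{4})/(M+1)$, which is strictly less than $1$.

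The main obstacle is securing an explicit finite value of $M$ in the polynomial-growth bound, because that exponent directly determines how close to $1$ one must take $\beta$ and, hence, the quality of the resulting short-interval estimate. The supporting work sits in Lemma~\ref{lem:Wks_polynomial_growth}, where the continuous and discrete spectral pieces of $W_k(s)$ must be bounded uniformly in vertical strips; the Maass-form contribution, controlled through the estimates on $\langle \mathcal{V},\mu_j\rangle$ and the convexity bound for $L(s,\mu_j)$, is the least robust of these and effectively governs the final $M$ (and with it, the admissible $\beta$).
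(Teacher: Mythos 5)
Your argument matches the paper's own proof essentially step for step: the contour shift to $\Re s = k - \tfrac{5}{4}$, the residue at $s = k-1$ (double pole when $k=3$, yielding the $\log X$), the polynomial-growth bound on the shifted integral played against the Gaussian decay, the recognition of $(2\pi)^{-1}\exp\bigl(-y^2\log^2(X/n)/4\pi\bigr)$ as the inverse Mellin transform of $\exp(\pi s^2/y^2)$, and the positivity argument to pass to the short-interval bound. Your threshold $\beta \geq (M+\tfrac{3}{4})/(M+1)$ is precisely the paper's choice $\beta = 1 - \tfrac{1}{4(M+1)}$ rewritten via $y = X^{1-\beta}$.
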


\begin{proof}
  Shift the contour left to $(k-1-\frac{1}{4})$.
  This passes a pole at $s=k-1$ with residue bounded by $O\big(X^{k-1} (\log X)/y\big)$.
  Recalling that $D(s,P_k \times P_k) \ll \lvert \Im s \rvert^M$
  when $\Re s \geq (k-1-\frac{1}{4})$, the shifted integral is bounded by
  \begin{equation*}
    \frac{X^{k-1-\frac{1}{4}}}{y}
    \int_{-\infty}^{\infty}
    (1+ \lvert t \rvert)^M \exp\left(-\frac{\pi t^2}{y^2}\right) dt
    \ll
    X^{k-1-\frac{1}{4}}y^M.
  \end{equation*}
  For the second statement in the lemma, let
  $V_{X, y}(n) = (2\pi)^{-1} \exp\left( -\frac{y^2 \log^2(X/n)}{4\pi}\right)$
  denote the inverse Mellin transform of $\exp (\pi s^2 / y^2)$.
  Then
  \begin{equation*}
    \sum_{\lvert X - n \rvert < X/y} P_k(n)^2
    \ll
    \sum_{\lvert X - n \rvert < X/y} P_k(n)^2 V_{X, y}(n)
    \ll
    \sum_{n \geq 1} P_k(n)^2 V_{X, y}(n),
  \end{equation*}
  and this last sum is exactly equal to the integral in the statement of the
  lemma.
  Choosing $y = X^{1/(4(M + 1)}$ in the integral bound proves the
  short interval result on intervals of length
  $X^\beta$ with $\beta = 1 - 1/4(M+1)$.
\end{proof}

Let $\beta$ be as in the lemma, and split the first sum
in~\eqref{eq:perron_remainder} over the intervals $[X/2, X - X^\beta]$,
$[X-X^\beta, X+X^\beta]$, and $[X+X^\beta, 2X]$.
On the middle interval, Lemma~\ref{lem:concentrating_cutoff_boundv2} direcly
gives the bound $O(X^{k-2+\beta}\log X)$.
On the first and last intervals, Abel summation and the lemma imply the
bound $O(X^{k - 1 - \delta}\log^2 X)$.
Choosing $\delta$ such that $\delta < 1 - \beta$, and $\epsilon$ such
that $\epsilon < (1 - \beta) / 2$ proves the theorem with $\lambda = (1 -
\beta)/2$.

\section{Laplace Transform}\label{sec:laplace}

Theorem~\ref{thm:smooth_cutoff} may be considered as a discrete Laplace transform of the mean square of the lattice point discrepancy.
Building upon this result, one can obtain asymptotics for the continuous Laplace transform
\begin{equation}\label{eq:continuous_laplace_def}
  \int_0^\infty P_k(t)^2 e^{-t/X} dt.
\end{equation}
In this section, we prove the following estimate for the continuous Laplace transform of $P_k(t)^2$.

\begin{theorem}\label{thm:continuous_laplace}
The Laplace transform of the second moment of the lattice point discrepancy in dimensions $k \geq 3$ satisfies
\begin{align*}
    \int_0^\infty P_k(t)^2 e^{-t/X}dt &= \delta_{[k=3]} C_3' X^{k-1}(\log X + 1 -\gamma) + \delta_{[k=4]} C_4' \Gamma(k-\tfrac{3}{2}) X^{k-\frac{3}{2}} \\
     &\quad +C_k \Gamma(k-1)X^{k-1} - \frac{\Gamma(k-1)\pi^k}{6\Gamma(\frac{k}{2})^2} X^{k-1} + O\left(X^{k-2+\epsilon}\right)\!,
  \end{align*}
  where the constants are the same constants as in Remark~\ref{rem:smooth_coefficients}.
\end{theorem}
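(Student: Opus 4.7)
The plan is to deduce Theorem~\ref{thm:continuous_laplace} from Theorem~\ref{thm:smooth_cutoff} by computing the step-function discrepancy between the continuous Laplace transform and the discrete smooth sum. Since $S_k(t) = S_k(\lfloor t \rfloor)$ for all $t \geq 0$, the sum in Theorem~\ref{thm:smooth_cutoff} equals $\int_0^\infty P_k(\lfloor t \rfloor)^2 e^{-\lfloor t \rfloor/X}\,dt$ up to a bounded boundary term, and it suffices to evaluate
\begin{equation*}
  I(X) := \int_0^\infty \bigl[P_k(t)^2 e^{-t/X} - P_k(\lfloor t \rfloor)^2 e^{-\lfloor t \rfloor/X}\bigr]\,dt.
\end{equation*}
On each interval $[n, n+1)$ the identity $P_k(t) - P_k(n) = -V_k(t^{k/2} - n^{k/2})$ gives
\begin{equation*}
  P_k(t)^2 - P_k(n)^2 = V_k^2(t^{k/2} - n^{k/2})^2 - 2V_k(t^{k/2} - n^{k/2}) P_k(n),
\end{equation*}
which decomposes $I(X) = \mathrm{A} + \mathrm{B} + \mathrm{C}$ into three pieces: $\mathrm{A}$ and $\mathrm{B}$ arising from the two terms above integrated against $e^{-t/X}$, and $\mathrm{C} = \sum_n P_k(n)^2 \int_n^{n+1}(e^{-t/X} - e^{-n/X})\,dt$.

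Pieces $\mathrm{A}$ and $\mathrm{C}$ are handled by direct Taylor expansion. The leading order of $(t^{k/2} - n^{k/2})^2$ on $[n, n+1)$ is $(k^2/4)(t-n)^2 n^{k-2}$, which after integrating, summing against $e^{-n/X}$, and using $V_k^2 k^2 = 4\pi^k/\Gamma(k/2)^2$ gives $\mathrm{A} = \pi^k \Gamma(k-1)/(3\Gamma(k/2)^2) \cdot X^{k-1} + O(X^{k-2})$. Piece $\mathrm{C}$ satisfies $\mathrm{C} = O(X^{k-2})$ because $X(1 - e^{-1/X}) - 1 = O(1/X)$ and $\sum P_k(n)^2 e^{-n/X} = O(X^{k-1})$ by Theorem~\ref{thm:smooth_cutoff}.

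Piece $\mathrm{B}$ carries the main analytic content. Taylor expansion of $(t^{k/2} - n^{k/2})$ reduces $\mathrm{B}$ to $-(kV_k/2)\, S(X) + O(X^{k-2+\epsilon})$, where $S(X) := \sum_{n \geq 1} P_k(n) n^{k/2-1} e^{-n/X}$. I evaluate $S(X)$ by Mellin inversion on
\begin{equation*}
  G(s) := \sum_{n \geq 1} \frac{P_k(n) n^{k/2-1}}{n^s} = F(s - \tfrac{k}{2} + 1) - V_k \zeta(s - k + 1), \qquad F(u) := \sum_{n \geq 1} \frac{S_k(n)}{n^u},
\end{equation*}
continuing $F$ using the Mellin--Barnes decomposition from Proposition~\ref{prop:dirichlet}. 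The apparent pole of $G(s)$ at $s = k$ cancels between $F$ and $V_k \zeta$; the leading pole at $s = k-1$ arises from the pole of $F$ at $u = k/2$, whose residue has two contributions: the direct residue $\pi^{k/2}/\Gamma(k/2)$ from the $L(u - k/2 + 1, \theta^k)$ term, and a hidden correction $-\pi^{k/2}/(2\Gamma(k/2))$ obtained by shifting the Mellin--Barnes contour past $z = 0$ and picking up $\zeta(0) L(u - k/2 + 1, \theta^k) = -\tfrac{1}{2} L(u - k/2 + 1, \theta^k)$. The net residue is $\pi^{k/2}/(2\Gamma(k/2))$. Polynomial growth of $G(s)$ in vertical strips (established in the manner of Lemmas~\ref{lem:Wks_polynomial_growth} and~\ref{lem:MBtransform_polynomial_growth}) justifies shifting the Mellin contour past $s = k - 1$, yielding $S(X) = \pi^{k/2}/(2\Gamma(k/2)) \cdot \Gamma(k-1) X^{k-1} + O(X^{k-2+\epsilon})$ and therefore $\mathrm{B} = -\pi^k \Gamma(k-1)/(2\Gamma(k/2)^2) X^{k-1} + O(X^{k-2+\epsilon})$. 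Summing $\mathrm{A} + \mathrm{B} + \mathrm{C}$ produces the total discrepancy $(1/3 - 1/2) \pi^k \Gamma(k-1)/\Gamma(k/2)^2 \cdot X^{k-1} = -\pi^k \Gamma(k-1)/(6 \Gamma(k/2)^2) X^{k-1} + O(X^{k-2+\epsilon})$, which added to Theorem~\ref{thm:smooth_cutoff} yields Theorem~\ref{thm:continuous_laplace}.

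The principal obstacle is the residue analysis in Piece $\mathrm{B}$: the naive $L$-residue is halved by the Mellin--Barnes correction, and this cancellation must be tracked to obtain the correct leading coefficient. A second subtlety is that $G(s)$ has a potential pole at $s = k/2$ inherited from $\zeta(u)$ in $F$ at $u = 1$, with residue $1 + L(1 - k/2, \theta^k)$. For $k = 3$ this would produce an $X^{3/2}$ term exceeding the claimed error $O(X^{1+\epsilon})$, so the identity $L(1 - k/2, \theta^k) = -1$ is essential. This identity follows by matching the singular parts of the functional equation $\pi^{-s - k/2 + 1} \Gamma(s + k/2 - 1) L(s, \theta^k) = \pi^{s-1} \Gamma(1-s) L(2 - k/2 - s, \theta^k)$ at $s = 1 - k/2$, where both sides have simple poles (from $\Gamma(s + k/2 - 1)$ on the left and from $L(1 - \eta, \theta^k) \sim -(\pi^{k/2}/\Gamma(k/2))/\eta$ on the right), and comparing residues gives $L(1 - k/2, \theta^k) = -1$ uniformly for $k \geq 3$.
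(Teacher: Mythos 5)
Your proposal is correct and follows essentially the same route as the paper: the identical three-term decomposition of $P_k(t)^2$ via $P_k(\lfloor t\rfloor)$ and $\lfloor t\rfloor^{k/2}-t^{k/2}$, Taylor expansion plus $\sum n^{k-2}e^{-n/X}=\Gamma(k-1)X^{k-1}+O(X)$ for the square term, and Mellin inversion of $D(s,P_k)$ for the cross term, with the same net residue $\pi^{k/2}/(2\Gamma(\tfrac{k}{2}))$ at the leading pole (including the $\zeta(0)=-\tfrac12$ halving) and the same use of $L(1-\tfrac{k}{2},\theta^k)=-1$ to kill the pole that would otherwise spoil the $k=3$ error term. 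The only cosmetic difference is that you bound the $e^{-t/X}$ versus $e^{-\lfloor t\rfloor/X}$ discrepancy as an explicit piece $\mathrm{C}$, whereas the paper sandwiches the corresponding integral between $e^{-1/X}\sum P_k(n)^2e^{-n/X}$ and the sum itself.
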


\begin{remark}
  It is possible to adapt the method of the proof of Theorem~\ref{thm:continuous_laplace} to obtain further secondary terms and decrease the error to $O(X^{\frac{k-1}{2}+\epsilon})$.
\end{remark}

Our proof of Theorem~\ref{thm:continuous_laplace} begins with the identity
\begin{equation*}
  P_k(t) = S_k(t) - V_k t^{\frac{k}{2}} = S_k( \lfloor t \rfloor ) - V_k t^{\frac{k}{2}} = P_k(\lfloor t \rfloor) + V_k \lfloor t \rfloor ^{\frac{k}{2}} - V_k t^{\frac{k}{2}}.
\end{equation*}
It follows that
\begin{equation}\label{eq:Pk_decomp_with_floors}
  P_k(t)^2 = P_k(\lfloor t \rfloor)^2 + V_k^2 \big( \lfloor t \rfloor^{\frac{k}{2}} - t^{\frac{k}{2}} \big)^2 + 2V_k P_k(\lfloor t \rfloor) \big( \lfloor t \rfloor^{\frac{k}{2}} - t^{\frac{k}{2}} \big).
\end{equation}

We will compute the Laplace transform~\eqref{eq:continuous_laplace_def} by computing it separately for each term in~\eqref{eq:Pk_decomp_with_floors}.
We begin with the first term in~\eqref{eq:Pk_decomp_with_floors}, which is very nearly equivalent to the sum studied in Theorem~\ref{thm:smooth_cutoff}.

\begin{lemma}[First term in the Laplace transform of~\eqref{eq:Pk_decomp_with_floors}]\label{lem:laplace_I} We have
  \begin{align*}
    \int_0^\infty P_k(\lfloor t \rfloor)^2 e^{-t/X} dt &= \delta_{[k=3]} C'_3  X^{k-1} \left(\log X +1-\gamma\right)+ C_k \Gamma(k-1)X^{k-1}  \\
    &\quad + \delta_{[k = 4]} C'_4 \Gamma(k-\tfrac{3}{2})X^{k - \frac{3}{2}} + O_\epsilon(X^{k-2 + \epsilon}),
  \end{align*}
\end{lemma}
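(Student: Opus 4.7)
The plan is to exploit the step-function nature of $P_k(\lfloor t\rfloor)^2$ to reduce the integral to the discrete smoothed sum already handled by Theorem~\ref{thm:smooth_cutoff}. Since $P_k(\lfloor t\rfloor)^2$ is constant on each interval $[n,n+1)$, I would split the integral as
\begin{equation*}
  \int_0^\infty P_k(\lfloor t \rfloor)^2 e^{-t/X}\,dt
  = \sum_{n=0}^{\infty} P_k(n)^2 \int_n^{n+1} e^{-t/X}\,dt
  = X(1 - e^{-1/X}) \sum_{n=0}^{\infty} P_k(n)^2 e^{-n/X}.
\end{equation*}
The $n=0$ term contributes $P_k(0)^2 = 1$ to the sum, which is absorbed into the error term, so the inner sum differs from the one in Theorem~\ref{thm:smooth_cutoff} by $O(1)$.

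Next I would expand the prefactor using the Taylor series of the exponential:
\begin{equation*}
  X(1 - e^{-1/X}) = 1 - \frac{1}{2X} + O(X^{-2}).
\end{equation*}
Multiplying this expansion by the asymptotic from Theorem~\ref{thm:smooth_cutoff}, the leading coefficient $1$ reproduces exactly the three main terms
\begin{equation*}
  \delta_{[k=3]} C_3' X^{k-1}(\log X + 1 - \gamma) + C_k \Gamma(k-1) X^{k-1} + \delta_{[k=4]} C_4' \Gamma(k-\tfrac{3}{2}) X^{k-\frac{3}{2}}.
\end{equation*}
The $-\tfrac{1}{2X}$ factor multiplied against the $O(X^{k-1})$ main contribution of the smoothed sum produces terms of size $O(X^{k-2}\log X)$, and the remaining $O(X^{-2})$ tail contributes $O(X^{k-3})$. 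Both fit inside the claimed error $O_\epsilon(X^{k-2+\epsilon})$, as does the prefactor multiplied against the error term $O_\epsilon(X^{k-2+\epsilon})$ from Theorem~\ref{thm:smooth_cutoff}.

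There is no real obstacle here; the lemma is essentially a bookkeeping corollary of Theorem~\ref{thm:smooth_cutoff}. The only point requiring a moment of care is verifying that none of the secondary terms generated by the expansion of $X(1-e^{-1/X})$ reach the $X^{k-\frac{3}{2}}$ level that would conflict with the explicit secondary main term in dimension four; since the smoothed sum's secondary structure is $X^{k-\frac{3}{2}}$ and we multiply only by $1 + O(X^{-1})$, the correction is of order $X^{k-\frac{5}{2}}$ and thus safely inside the error. Assembling the pieces yields the claimed asymptotic.
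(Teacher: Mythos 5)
Your proof is correct and takes essentially the same approach as the paper: both reduce the step-function integral to the discrete smoothed sum of Theorem~\ref{thm:smooth_cutoff} by observing the two differ by a factor $1 + O(1/X)$. The paper phrases this as the sandwich $e^{-1/X}\sum_{n\geq 0} P_k(n)^2 e^{-n/X} \leq \int_0^\infty P_k(\lfloor t\rfloor)^2 e^{-t/X}\,dt \leq \sum_{n\geq 0} P_k(n)^2 e^{-n/X}$, while you compute the exact prefactor $X(1-e^{-1/X})$; this is a cosmetic difference, not a different argument.
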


\begin{proof}
  We note that
  \[ e^{-1/X} \sum_{n \geq 0} P_k(n)^2 e^{-n/X} \leq \int_0^\infty P_k(\lfloor t \rfloor)^2 e^{-t/X} dt \leq \sum_{n \geq 0} P_k(n)^2 e^{-n/X}. \]
As $e^{-1/X} = 1 + O(\frac{1}{X})$, the lemma follows from Theorem~\ref{thm:smooth_cutoff}.
\end{proof}


The second term in~\eqref{eq:Pk_decomp_with_floors} can be understood through Abel summation.
\begin{lemma}[Second term in the Laplace transform of~\eqref{eq:Pk_decomp_with_floors}]\label{lem:laplace_II} We have
  \begin{equation*}
    V_k^2 \int_0^\infty \big( \lfloor t \rfloor^{\frac{k}{2}} - t^{\frac{k}{2}} \big)^2 e^{-t/X} dt = \frac{k^2 V_k^2 \Gamma(k-1)}{12}X^{k-1} + O(X^{k-2+\epsilon}).
  \end{equation*}
\end{lemma}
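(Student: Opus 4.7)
The plan is to split the integral into unit intervals, Taylor expand $(n+u)^{k/2}$ about $u = 0$, and use standard asymptotics for $\sum_{n} n^{k-2} e^{-n/X}$. Writing $t = n + u$ with $n = \lfloor t \rfloor$ and $u \in [0, 1)$, we have
\begin{equation*}
  \int_0^\infty \big( \lfloor t \rfloor^{\frac{k}{2}} - t^{\frac{k}{2}} \big)^2 e^{-t/X} dt = \sum_{n = 0}^\infty e^{-n/X} \int_0^1 \big( n^{\frac{k}{2}} - (n+u)^{\frac{k}{2}}\big)^2 e^{-u/X} du.
\end{equation*}
The $n = 0$ term contributes $O(1)$ and may be absorbed into the error. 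For $n \geq 1$, I would use the binomial expansion
\begin{equation*}
  (n+u)^{\frac{k}{2}} = n^{\frac{k}{2}} + \tfrac{k}{2} u\, n^{\frac{k}{2} - 1} + O\big( n^{\frac{k}{2}-2}\big)
\end{equation*}
uniformly for $u \in [0,1)$, so that after squaring
\begin{equation*}
  \big( n^{\frac{k}{2}} - (n+u)^{\frac{k}{2}}\big)^2 = \tfrac{k^2}{4} u^2 n^{k-2} + O\big(n^{k-3}\big).
\end{equation*}

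Next I would integrate against $e^{-u/X}$ on $[0,1)$, using $\int_0^1 u^2\, e^{-u/X} du = \tfrac{1}{3} + O(1/X)$. This produces
\begin{equation*}
  \int_0^1 \big( n^{\frac{k}{2}} - (n+u)^{\frac{k}{2}}\big)^2 e^{-u/X} du = \frac{k^2}{12} n^{k-2} + O\big(n^{k-2}/X\big) + O\big(n^{k-3}\big).
\end{equation*}
Summing against $e^{-n/X}$ and invoking the Mellin-based asymptotic
\begin{equation*}
  \sum_{n \geq 1} n^{k-2} e^{-n/X} = \Gamma(k-1) X^{k-1} + O\big(X^{k-2}\big),
\end{equation*}
(together with the analogous bound $\sum n^{k-3} e^{-n/X} = O(X^{k-2})$, valid since $k \geq 3$; for $k = 3$ one only picks up an additional $\log X$, still absorbed into $O(X^{k-2+\epsilon})$), yields the main term $\tfrac{k^2}{12} \Gamma(k-1) X^{k-1}$. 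Multiplying by $V_k^2$ gives the stated asymptotic.

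The only mildly delicate point is ensuring the uniformity of the Taylor remainder: for $u \in [0,1)$ and $n \geq 1$, the standard mean-value bound $|(n+u)^{k/2} - n^{k/2} - \tfrac{k}{2} u n^{k/2-1}| \leq C_k u^2 n^{k/2-2}$ can be shown by applying Taylor's theorem with integral remainder to $f(u) = (n+u)^{k/2}$. Once this uniform bound is in hand, all remaining steps are routine, and the $k = 3$ case merely absorbs a harmless logarithm into $X^{\epsilon}$. No other serious obstacle arises.
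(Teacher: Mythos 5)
Your proposal is correct and follows essentially the same route as the paper: split the integral at integer points, Taylor-expand $(n+u)^{k/2}$ about $u=0$ (the paper writes this as $n^{k/2}((1+t/n)^{k/2}-1)$, which is the same expansion), integrate $u^2 e^{-u/X}$ over $[0,1]$ to get $\tfrac13 + O(1/X)$, and apply the standard asymptotic $\sum_{n\ge1} n^{k-2}e^{-n/X} = \Gamma(k-1)X^{k-1}+O(X^{k-2})$. All error terms are handled as in the paper (your cautionary $\log X$ for $k=3$ is not even needed, since $\sum_n e^{-n/X} = O(X) = O(X^{k-2})$ there), so no gap remains.
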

\begin{proof}
  Expanding the integral and estimating the integrand, we compute
  \begin{align}
    &\int_0^\infty \big( \lfloor t \rfloor^{\frac{k}{2}} - t^{\frac{k}{2}} \big)^2 e^{-t/X} dt \notag \\
    &\quad = \sum_{n \geq 1} n^k e^{-n/X} \int_0^1 \bigg( \!\left( 1+\frac{t}{n} \right)^{\frac{k}{2}} - 1\bigg)^2 e^{-t/X} \,dt +O(1) \notag \\
    &\quad = \frac{k^2}{4} \sum_{n \geq 1} n^k e^{-n/X} \int_0^1 \left( \frac{t}{n} + O\left( \frac{t^2}{n^2} \right) \!\right)^2 \left(1 + O \left(\frac{1}{X} \right)\! \right) \,dt +O(1) \notag \\
    &\quad = \frac{k^2}{4} \sum_{n \geq 1} n^{k-2} e^{-n/X} \int_0^1 t^2 \left(1+ O \left(\frac{1}{n} + \frac{1}{X} \right)\! \right) \,dt + O(1) \notag \\
    &\quad = \frac{k^2}{4} \sum_{n \geq 1} n^{k-2} e^{-n/X} \left(\frac{1}{3} + O\left(\frac{1}{n} + \frac{1}{X} \right) \!\right) +O(1). \label{lemma_84_sum}
  \end{align}
  The $O(1)$ term above comes from the part of the integral corresponding to $[0,1]$. For $\sigma > m+1$ with $m > 0$, we note that
  \[ \sum_{n \geq 1} n^m e^{-n/X} = \frac{1}{2\pi i} \int_{(\sigma)} \zeta(s-m)X^s\Gamma(s) \,ds = \Gamma(m+1) X^{m+1} + O(X). \]
The last equality is obtained by moving the line of integration to $\sigma = 1$, picking up the residue at $s = m+1$ and bounding the leftover integral. Combining this statement with \eqref{lemma_84_sum} gives us the lemma.
\end{proof}

Finally, we address the last term in~\eqref{eq:Pk_decomp_with_floors}.

\begin{lemma}[Third term in the Laplace transform of~\eqref{eq:Pk_decomp_with_floors}]\label{lem:laplace_III} We have
  \begin{equation*}
  2V_k \int_0^\infty \!P_k(\lfloor t \rfloor) (\lfloor t \rfloor^{\frac{k}{2}} - t^{\frac{k}{2}}) e^{-t/X} dt= -\frac{\pi^{\frac{k}{2}} k \Gamma(k - 1) V_k}{4 \Gamma(\frac{k}{2})} X^{k-1} + O(X^{k-2+\epsilon}).
  \end{equation*}
\end{lemma}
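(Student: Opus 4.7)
The plan is to partition $[0,\infty)$ into unit intervals, Taylor expand to reduce the integral to $\sum_n P_k(n) n^{k/2-1} e^{-n/X}$, and then extract its main term by Mellin inversion. First I would write
\begin{equation*}
\int_0^\infty P_k(\lfloor t\rfloor)(\lfloor t\rfloor^{k/2}-t^{k/2}) e^{-t/X}\,dt = \sum_{n\geq 0} P_k(n) I_n,\qquad I_n := \int_n^{n+1}(n^{k/2}-t^{k/2}) e^{-t/X}\,dt,
\end{equation*}
and Taylor expand $t^{k/2}$ about $t=n$, using $e^{-u/X}=1+O(1/X)$ for $u\in[0,1]$, to obtain $I_n = -\tfrac{k}{4} n^{k/2-1} e^{-n/X} + O(n^{k/2-2}e^{-n/X} + n^{k/2-1}e^{-n/X}/X)$. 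The resulting errors are acceptable: by Cauchy--Schwarz together with Theorem~\ref{thm:smooth_cutoff},
\begin{equation*}
\sum_n |P_k(n)| n^{k/2-2} e^{-n/X} \leq \Bigl(\sum_n P_k(n)^2 e^{-n/X}\Bigr)^{1/2}\Bigl(\sum_n n^{k-4} e^{-n/X}\Bigr)^{1/2} \ll X^{k-2+\epsilon},
\end{equation*}
and the $1/X$ error is bounded similarly.

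The main task is to evaluate $\Sigma(X) := \sum_n P_k(n) n^{k/2-1} e^{-n/X}$. Mellin inversion gives
\begin{equation*}
\Sigma(X) = \frac{1}{2\pi i}\int_{(\sigma)} L_{P_k}(s-\tfrac{k}{2}+1)\,\Gamma(s) X^s\,ds,
\end{equation*}
with $L_{P_k}(u):=\sum_n P_k(n)/n^u = L_{S_k}(u) - V_k\zeta(u-\tfrac{k}{2})$. By \eqref{eq:Skseries}, $L_{S_k}(u)=\zeta(u)+L(u-\tfrac{k}{2}+1,\theta^k)+Z(u)$ with $Z(u)=\sum_{m,h\geq 1}r_k(m)/(m+h)^u$. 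Applying the Barnes identity \eqref{eq:MellinBarnesIdentity} to continue $Z(u)$ past $\Re u = k/2 + 1$ and shifting the $z$-contour leftward past the poles at $z=1$ and $z=0$ produces
\begin{equation*}
Z(u) = \frac{L(u-\tfrac{k}{2},\theta^k)}{u-1} - \frac{1}{2}L(u-\tfrac{k}{2}+1,\theta^k) + A(u),
\end{equation*}
where $A(u)$ is analytic in a neighborhood of $u=k/2$. The pole of $L(u-\tfrac{k}{2}+1,\theta^k)$ at $u = k/2$ with residue $\pi^{k/2}/\Gamma(k/2)$ combines with the $-\tfrac{1}{2} L$ contribution from $Z(u)$ to give $L_{S_k}(u)$, and hence $L_{P_k}(u)$ (since $V_k\zeta(u-\tfrac{k}{2})$ is analytic at $u=k/2$), a simple pole at $u=k/2$ of residue $\pi^{k/2}/(2\Gamma(k/2))$.

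Shifting the Mellin contour leftward past $s=k-1$ and bounding the remainder via polynomial vertical-strip estimates for $L_{P_k}$ (obtained analogously to Lemmas~\ref{lem:Wks_polynomial_growth} and~\ref{lem:MBtransform_polynomial_growth}, using the decomposition above and the known growth of $\zeta$ and $L(\cdot,\theta^k)$) yields $\Sigma(X)=\tfrac{\pi^{k/2}\Gamma(k-1)}{2\Gamma(k/2)}X^{k-1}+O(X^{k-2+\epsilon})$. Multiplying by $2V_k\cdot(-\tfrac{k}{4})=-\tfrac{k V_k}{2}$ produces the stated main term. The hard part is the careful residue computation: a naive analysis that considers only the $L(u-\tfrac{k}{2}+1,\theta^k)$ pole would give residue $\pi^{k/2}/\Gamma(k/2)$ at $u=k/2$ and yield a main term twice as large as stated. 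The canceling $-\tfrac{1}{2}L(u-\tfrac{k}{2}+1,\theta^k)$ from the $z=0$ residue of the Barnes integral for $Z(u)$ (equivalently, the half-point Bernoulli contribution in an Euler--Maclaurin expansion $\sum_m r_k(m)\zeta(u,m)$) is essential; one can cross-check the answer via the identity $k^2 V_k^2/12 - k V_k \pi^{k/2}/(4\Gamma(k/2)) = -\pi^k/(6\Gamma(k/2)^2)$ that is needed for Lemmas~\ref{lem:laplace_I}--\ref{lem:laplace_III} to combine into Theorem~\ref{thm:continuous_laplace}.
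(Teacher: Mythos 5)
Your proposal is correct and follows essentially the same route as the paper: the same unit-interval partition and Taylor expansion reducing the integral to $-\tfrac{k}{4}\sum_n P_k(n)n^{k/2-1}e^{-n/X}$, followed by Mellin inversion against the Dirichlet series of $P_k(n)$, decomposed via~\eqref{eq:Skseries} and the Barnes identity, with the crucial residue $\pi^{k/2}/(2\Gamma(\tfrac{k}{2}))$ at $u=\tfrac{k}{2}$ arising exactly as in the paper from the cancellation between the $L(u-\tfrac{k}{2}+1,\theta^k)$ pole and the $z=0$ residue of the Mellin--Barnes integral. Your Cauchy--Schwarz treatment of the error terms and the consistency check against Theorem~\ref{theorem:laplace} are sound additions but do not change the argument.
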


\begin{proof}
Our approach here is analogous to that of the previous lemma. We compute
\begin{align}
  &\int_0^\infty P_k(\lfloor t \rfloor) (\lfloor t \rfloor^{\frac{k}{2}} - t^{\frac{k}{2}}) e^{-t/X} \,dt \notag \\
  &\quad = -\sum_{n \geq 1} P_k(n) n^\frac{k}{2} e^{-n/X} \int_0^1 \left( \left(1+\frac{t}{n} \right)^\frac{k}{2} - 1 \right) e^{-t/X} \,dt + O(1) \notag \\
  &\quad = -\frac{k}{2} \sum_{n \geq 1} P_k(n) n^\frac{k}{2} e^{-n/X} \int_0^1 \left( \frac{t}{n} + O \left(\frac{t^2}{n^2} \right) \right) \left( 1 + O\left(\frac{1}{X} \right)\right) \,dt + O(1) \notag \\
  &\quad =  -\frac{k}{2} \sum_{n \geq 1} P_k(n) n^{\frac{k}{2}-1} e^{-n/X} \int_0^1 t \left( 1 + O \left( \frac{1}{n}+\frac{1}{X} \right)\right) \,dt + O(1) \notag \\
  &\quad = -\frac{k}{2} \sum_{n \geq 1} P_k(n) n^{\frac{k}{2}-1} e^{-n/X} \left( \frac{1}{2} + O \left(\frac{1}{n} + \frac{1}{X} \right) \right) + O(1). \label{eq:Pkexpr}
\end{align}
At this point, we transform the sum above into an inverse Mellin transform,
\begin{equation}
 \sum_{n=1}^\infty P_k(n) n^{\frac{k}{2}-1} e^{-n/X} = \frac{1}{2\pi i} \int_{(\sigma)} D(s-\tfrac{k}{2}+1,P_k) \Gamma(s) X^s ds, \label{eq:laplace_j1}
\end{equation}
in which $D(s, P_k) := \sum_{n \geq 1} P_k(n) n^{-s}$ denotes the non-normalized Dirichlet series associated to $P_k$.
By modifying the analysis of $\sum S_k(n)n^{-(s + \frac{k}{2} - 2)}$ from~\eqref{eq:Skseries} and recalling that $P_k(n) = S_k(n) - V_k n^{\frac{k}{2}}$, we see that
\begin{align}
\begin{split} \label{eq:DsPk_continuation}
  D(s,P_k) &= \zeta(s) + L(s-\tfrac{k}{2}+1,\theta^k) -V_k \zeta(s-\tfrac{k}{2}) \\
  &\quad + \frac{1}{2\pi i} \int_{(\sigma)} L(s-\tfrac{k}{2}+1-z, \theta^k)\zeta(z) \frac{\Gamma(z) \Gamma(s-z)}{\Gamma(s)} dz,
\end{split}
\end{align}
in which $L(s,\theta^k)$ is defined as in Proposition~\ref{prop:DsPkSquared_vs_DsSkSquared}.

The function $D(s, P_k)$ admits potential poles at $s = \tfrac{k}{2} + 1$ (coming from a zeta function and the Mellin-Barnes integral, visible after shifting the line of integration past the pole at $z = 1$), at $s = \tfrac{k}{2}$ (coming from $L(s - \tfrac{k}{2} + 1, \theta^k)$ and the Mellin-Barnes integral), and at $s = \tfrac{k}{2}-1$ (coming from the Mellin-Barnes integral), with no other poles for $\Re s > \tfrac{k}{2} - 2$.
The potential pole at $s = \tfrac{k}{2} + 1$ cancels, while the poles at $s = \tfrac{k}{2}$ and $s = \tfrac{k}{2} - 1$ have residues
\begin{equation*}
  \Res_{s = \frac{k}{2}} = \frac{\pi^{\frac{k}{2}}}{2\Gamma(\frac{k}{2})} \quad \text{and} \quad \Res_{s = \frac{k}{2} - 1} = \frac{\pi^{\frac{k}{2}}}{12 \Gamma(\frac{k}{2} - 1)}.
\end{equation*}

The integrand in~\eqref{eq:DsPk_continuation} has exponential decay in vertical strips from the gamma function.
Shifting the line of integration in~\eqref{eq:laplace_j1} to $k-2 + \epsilon$ for a small $\epsilon > 0$ shows that
\begin{equation*}
  \sum_{n=1}^\infty P_k(n) n^{\frac{k}{2}-1} e^{-n/X} = \frac{\pi^\frac{k}{2}\Gamma(k-1)}{2\Gamma(\frac{k}{2})} X^{k-1} +
  \frac{\pi^\frac{k}{2} \Gamma(k-2)}{12\Gamma(\frac{k}{2}-1)} X^{k-2} + O \big( X^{k-3+\epsilon} \big).
\end{equation*}
Plugging this back into~\eqref{eq:Pkexpr} completes the proof.
\end{proof}

Our proof of Theorem~\ref{thm:continuous_laplace} now follows from the three-term decomposition of $P_k(t)^2$ given in~\eqref{eq:Pk_decomp_with_floors} and Lemmas~\ref{lem:laplace_I},~\ref{lem:laplace_II}, and~\ref{lem:laplace_III}.

\section{Improving the Integrated Mean Square Estimate}\label{sec:super_jarnik}

As our second application of the main results of this paper, we translate
Theorem~\ref{thm:sharp_cutoff_theoremv2} into the same language as the mean
square estimate for the lattice point discrepancy on the sphere.
Recall that Lau~\cite{Lau1999} showed that
\[
  \int_0^X \left(P_3(t)\right)^2 dt
  =
  \frac{C_3'}{2} X^2 \log X + O\left( X^2 \right),
\]
and note that the leading constant agrees with the constant in
Theorem~\ref{thm:sharp_cutoff_theoremv2}.

We will prove the following refinement of this mean square estimate as
a corollary to Theorem~\ref{thm:sharp_cutoff_theoremv2}.

\begin{theorem}\label{thm:super_jarnik}
  There exists $\lambda >0$ such that
  \begin{equation*}
    \int_0^X P_3(t)^2 dt
    =
    \frac{C_3'}{2} X^2 \log X
    +
    \left(\frac{C_3}{2}-\frac{C_3'}{4}-\frac{\pi^2}{3}\right) X^2
    +
    O_\lambda\left(X^{2-\lambda }\right),
  \end{equation*}
  where $C_3'$ and $C_3$ are the same constants as in
  Remark~\ref{rem:smooth_coefficients}.
\end{theorem}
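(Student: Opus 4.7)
The plan is to reduce Theorem~\ref{thm:super_jarnik} to Theorem~\ref{thm:sharp_cutoff_theoremv2} together with an independent estimate of the cross sum $\sum_{n \leq X} P_3(n) n^{1/2}$. Since $S_3(t)$ is constant on each interval $[n, n+1)$, we have $P_3(t) = P_3(n) - V_3(t^{3/2} - n^{3/2})$ on $[n, n+1)$. Expanding the square and integrating produces
\begin{equation*}
  \int_n^{n+1} P_3(t)^2 \, dt = P_3(n)^2 - 2V_3 P_3(n) A_n + V_3^2 B_n,
\end{equation*}
where a short Taylor expansion gives $A_n = \int_n^{n+1}(t^{3/2} - n^{3/2})\,dt = \tfrac{3}{4}n^{1/2} + O(n^{-1/2})$ and $B_n = \int_n^{n+1}(t^{3/2} - n^{3/2})^2\,dt = \tfrac{3n}{4} + O(1)$. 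Summing over $n < X$, using Heath-Brown's pointwise bound $P_3(t)\ll t^{21/32+\epsilon}$ to dispose of the boundary contribution from $[\lfloor X\rfloor, X]$ and of the error terms from $A_n, B_n$ against $P_3(n)$, I obtain
\begin{equation*}
  \int_0^X P_3(t)^2 \, dt = \sum_{n \leq X} P_3(n)^2 - \tfrac{3V_3}{2}\sum_{n \leq X} P_3(n)\, n^{1/2} + \tfrac{3V_3^2}{8} X^2 + O_\lambda(X^{2-\lambda}).
\end{equation*}

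Theorem~\ref{thm:sharp_cutoff_theoremv2} supplies the first sum, and substituting $V_3 = \tfrac{4\pi}{3}$ turns the deterministic term into $\tfrac{2\pi^2}{3} X^2$. The main obstacle is therefore to show
\begin{equation*}
  \sum_{n \leq X} P_3(n)\, n^{1/2} = \tfrac{\pi}{2} X^2 + O_\lambda(X^{2-\lambda}),
\end{equation*}
since the pairing $-\tfrac{3V_3}{2}\cdot\tfrac{\pi}{2}X^2 = -\pi^2 X^2$ then combines with $\tfrac{2\pi^2}{3}X^2$ to yield precisely the $-\tfrac{\pi^2}{3}X^2$ secondary term in the statement.

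To estimate the cross sum with power savings, I will apply Perron's formula to $D(s - \tfrac{1}{2}, P_3)$, where $D(s, P_k) = \sum P_k(n) n^{-s}$ was continued meromorphically in the proof of Lemma~\ref{lem:laplace_III} through the decomposition
\begin{equation*}
  D(s, P_3) = \zeta(s) + L(s - \tfrac{1}{2}, \theta^3) - V_3 \zeta(s - \tfrac{3}{2}) + \text{(Mellin--Barnes integral)}.
\end{equation*}
As recorded in that proof, $D(s, P_3)$ is analytic for $\Re s > -\tfrac{1}{2}$ save for simple poles at $s = \tfrac{3}{2}$ with residue $\tfrac{\pi^{3/2}}{2\Gamma(3/2)} = \pi$ and at $s = \tfrac{1}{2}$ with residue $\tfrac{\pi}{12}$. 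Perron's formula applied to $\sum P_3(n) n^{1/2}$ then picks up main terms from the translated poles at $s = 2$ and $s = 1$, contributing $\tfrac{\pi}{2}X^2 + O(X)$ as required. The power-saving error term requires polynomial vertical growth of $D(s - \tfrac{1}{2}, P_3)$ in strips, which I obtain by applying Phragm\'en--Lindel\"of to $L(s, \theta^3)$ via its functional equation and by applying Lemma~\ref{lem:MBtransform_polynomial_growth} to the Mellin--Barnes piece; this allows a contour shift past $\Re s = 2$ to $\Re s = 2 - \lambda'$ for some $\lambda' > 0$, exactly as in the proof of Theorem~\ref{thm:sharp_cutoff_theoremv2}.

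Assembling the three pieces with $\lambda$ taken to be the minimum of the power savings furnished by Theorem~\ref{thm:sharp_cutoff_theoremv2} and by the cross-sum Perron argument yields the theorem after collecting the constants. The principal difficulty is the cross-sum step: one must verify that the meromorphic structure of $D(s, P_3)$ from Section~\ref{sec:laplace} is sufficiently robust—both in terms of the explicit residues and, more importantly, of polynomial bounds in vertical strips—to support a genuine power-saving Perron shift rather than merely a log-type truncation error.
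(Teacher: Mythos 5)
Your proposal is correct and follows essentially the same route as the paper: the same pointwise decomposition of $P_3(t)^2$ on $[n,n+1)$, the same reduction to the cross sum $\sum_{n\le X}P_3(n)n^{1/2}$ with the deterministic $\tfrac{3V_3^2}{8}X^2$ term, and the same Perron argument applied to $D(s-\tfrac12,P_3)$ using the continuation~\eqref{eq:DsPk_continuation}, the residue $\tfrac{\pi}{2}X^2$ at $s=2$, and polynomial vertical growth to shift the contour. The paper merely makes the growth bound explicit (obtaining $D(s-\tfrac12,P_3)\ll(1+|s|)^{2-\epsilon}$ and optimizing the truncation at $T=X^{37/96}$ to get a concrete exponent), whereas you leave the power saving unquantified; this is a difference of detail, not of method.
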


\begin{proof}
It suffices to prove Theorem~\ref{thm:super_jarnik} for integer $X$ as a
consequence of Heath-Brown's estimate
$P_3(n) = O(n^{21/32+\epsilon})$~\cite{HeathBrown99}.
Indeed, the contribution of the integral of $(P_3(x))^2$ over $[X,X+1]$ is
$O(X^{21/16+\epsilon})$, which is sufficiently small.

Rewrite Theorem~\ref{thm:sharp_cutoff_theoremv2} in the form
\begin{align}\label{eq:sharp_as_integral}
  \int_0^X P_3(\lfloor t \rfloor)^2 dt =\frac{C_3'}{2}X^2\log X + \left(\frac{C_3}{2} -\frac{C_3'}{4}\right) X^{2} +O_\lambda(X^{2-\lambda}).
\end{align}
As a special case of~\eqref{eq:Pk_decomp_with_floors} we have
\[
  P_3(t)^2 - P_3(\lfloor t \rfloor)^2 = 2V_3P_3(\lfloor t \rfloor)\big( \lfloor t \rfloor^\frac{3}{2} -t^\frac{3}{2}\big)+V_3^2\big(\lfloor t \rfloor^\frac{3}{2}-t^\frac{3}{2}\big)^2.
\]
The difference between~\eqref{eq:sharp_as_integral} and $\int_0^X P_3(t)^2 dt$ can therefore be written as
\begin{equation} \label{diff_integral}
  2V_3\int_0^X P_3(\lfloor t \rfloor)\big( \lfloor t \rfloor^\frac{3}{2} -t^\frac{3}{2}\big)dt+V_3^2\int_0^X\big(\lfloor t \rfloor^\frac{3}{2}-t^\frac{3}{2}\big)^2dt.
\end{equation}

The second integral in~\eqref{diff_integral} admits the approximation
\[
  V_3^2 \sum_{n=0}^{X-1} \int_n^{n+1} \big( n^\frac{3}{2} -t^\frac{3}{2}\big)^2dt = V_3^2\sum_{n=0}^{X-1} \left(\frac{3n}{4}+ O\left(1\right)\right),
\]
obtained by integrating each summand and then performing a series expansion in $n$ term-by-term.
Summing over $n \leq X-1$, we see that
\[
  V_3^2 \sum_{n=0}^{X-1} \int_n^{n+1} \big( n^\frac{3}{2} -t^\frac{3}{2}\big)^2dt = \frac{3V_3^2}{8} X^2+O\left(X \right).
\]

Now consider the first integral in~\eqref{diff_integral}.
The contribution of the integral over the range $[0,1]$ is $O(1)$.  For the rest, we again break up the integral at discontinuities and integrate termwise to obtain
\[
  2V_3\sum_{n=1}^{X-1} P_3(n) \int_{n}^{n+1} \big(n^\frac{3}{2} - t^\frac{3}{2}\big)dt =-2V_3\sum_{n=1}^{X-1} P_3(n) \left(\frac{3\sqrt{n}}{4} +O\left(\frac{1}{n^{\frac{1}{2}}}\right)\!\right).
\]
Again using Heath-Brown's bound, $P_3(n) \ll n^{\frac{21}{32} + \epsilon}$, we estimate the contribution of the error term in the series expansion above by $O(X^{\frac{1}{2}+\frac{21}{32}+\epsilon})$.

Rearranging, we write the difference between $\int_0^X P_3(t)^2 dt$ and~\eqref{eq:sharp_as_integral} as
\begin{align} \label{halfway_summary}
  \int_0^X \!\! P_3(t)^2 dt\! -\!\! \sum_{n \leq X}\! P_3(n)^2 = \frac{3V_3^2X^2}{8} -\frac{3V_3}{2}\! \sum_{n=1}^{X-1}\! P_3(n) n^\frac{1}{2} + O\!\left(X^{\frac{37}{32}+\epsilon}\right)\!.
\end{align}
It remains to estimate the partial sum $\sum_{n \leq X} P_3(n) \sqrt{n}$.

To estimate this series, we again use Perron's formula (in the form given
in~\cite[Thm 5.2 and Cor 5.3]{montgomery2006multiplicative}), giving
\begin{equation}\label{eq:perron_P3}
  \sum_{n \leq X} P_3(n) \sqrt{n}
  =
  \int_{\sigma_0 - iT}^{\sigma_0 + iT}
  D(s - \tfrac{1}{2}, P_3) \frac{X^s}{s} ds
  +
  \mathcal{R}
\end{equation}
where $\sigma_0 = 2 + \frac{5}{32} + \epsilon$ for a small $\epsilon > 0$,
$T = X^\delta$ for a small $\delta > 0$ to be specified later, and where the
remainder term can be estimated by
\begin{equation*}
  \mathcal{R} \ll \sum_{\substack{X/2 < n < 2X \\ n \neq X}}
  P_3(n) \sqrt{n} \min \Big( 1, \frac{X}{T \lvert X - n \rvert} \Big)
  +
  \frac{X^{\sigma_0}}{T} \sum_{n \geq 1} \frac{P_3(n)\sqrt n}{n^{\sigma_0}}.
\end{equation*}
By Heath-Brown's estimate, we can trivially bound the remainder term by
$\mathcal{R} \ll X^{2 + 5/32 + \epsilon - \delta} \log X$.

It follows from the decomposition~\eqref{eq:DsPk_continuation} that shifting
the line of integration in~\eqref{eq:perron_P3} to $\Re(s) = (1 + 2 \epsilon)$
passes a pole at $s = 2$ with residue $\pi X^2 / 2$ and no other poles.
It only remains to bound the growth of the shifted integral.

From~\eqref{eq:DsPk_continuation}, it is clear that $D(s, P_3)$ has polynomial
growth in vertical strips.
But unlike in \S\ref{sec:sharp_second_momentv2}, we must explicitly understand
the rate of polynomial growth.
We do this by bounding each term in the
decomposition~\eqref{eq:DsPk_continuation}.

First, we estimate the integral
\begin{equation*}
  \frac{1}{2\pi i}\!\int_{(-1+\epsilon)}
  \!\!
  L(s-1-z,\theta^3)\zeta(z)
  \frac{\Gamma(z)\Gamma(s-\frac{1}{2}-z)}{\Gamma(s-\frac{1}{2})} dz
\end{equation*}
for $\Re s = 1 + 2 \epsilon$.
Note that $L(s, \theta^3)$ is uniformly bounded in its convergent half-plane.
By the functional equation for $\zeta(z)$ and Stirling's approximation, we
estimate the integrand to be bounded by
\begin{equation*}
 (1+\lvert s \rvert)^{-2\epsilon} (1 + \lvert s - z \rvert)^{1+\epsilon} e^{-\frac{\pi}{2}(\lvert z \rvert + \lvert s - z \rvert - \lvert s \rvert)}.
\end{equation*}
When $\lvert z \rvert < \lvert s \rvert$, there is no exponential contribution and the integrand is bounded by $(1 + \lvert s \rvert)^{1-\epsilon}$ on an interval of length $O(\lvert s \rvert)$. When $\lvert z \rvert > \lvert s \rvert$, there is exponential decay in the integrand and so the contribution to the integral from this domain is $O((1+|s|)^{1-\epsilon})$.
Therefore
\begin{equation*}
  \frac{1}{2\pi i}\!\int_{(-1+\epsilon)}\!\! L(s-1-z,\theta^3)\zeta(z)\frac{\Gamma(z)\Gamma(s-\frac{1}{2}-z)}{\Gamma(s-\frac{1}{2})}dz \ll_\epsilon (1+\lvert s \rvert)^{2-\epsilon}.
\end{equation*}
Coupled with the Phragm\'en-Lindel\"of convexity estimates
\begin{align*}
  &\zeta(\tfrac{1}{2}+2\epsilon+it) \ll (1+\lvert t \rvert)^{\frac{1}{4}},
  \quad
  &&\zeta(-1+2\epsilon+it) \ll (1+\lvert t \rvert)^\frac{3}{2},
  \\
  &L(2\epsilon+it,\theta^3) \ll (1+ \lvert t \rvert)^1,
  \quad
  &&L(-1+2\epsilon+it, \theta^3) \ll (1+\lvert t \rvert)^\frac{5}{2},
\end{align*}
this implies that $D(s-\tfrac{1}{2},P_3) \ll (1+ \lvert s \rvert)^{2-\epsilon}$
on the line $\Re s = 1+2\epsilon$.

Thus the shifted integral satisfies the bound
\begin{equation*}
  \frac{1}{2\pi i} \int_{1 + 2\epsilon - iT}^{1 + 2 \epsilon + iT}
  D(s - \tfrac{1}{2}, P_3) \frac{X^s}{s} ds
  \ll
  X^{1 + 2 \delta + 2 \epsilon} \log X
\end{equation*}
and the integrals over the top and bottom portions of the rectangular
contour are bounded by
$O(X^{2 + 5/32 + \epsilon - \delta}
+
X^{1 + 2\delta + 2 \epsilon + \delta \epsilon})$.

Assembling the terms from Perron's formula, we find that
\begin{equation*}
  \sum_{n \leq X} P_3(n) \sqrt n
  =
  \frac{\pi}{2} X^2
  +
  O(X^{2 + \frac{5}{32} - \delta + \epsilon} \log X)
  +
  O(X^{1 + 2 \delta + 2 \epsilon - \delta \epsilon}).
\end{equation*}
Choosing $\delta = 37/96$ shows that
\begin{equation*}
  \sum_{n \leq X} P_3(n) \sqrt n
  =
  \frac{\pi}{2} X^2
  +
  O(X^{2 - \frac{11}{48} + 2\epsilon})
\end{equation*}
for any $\epsilon > 0$.

The theorem now follows from Theorem~\ref{thm:sharp_cutoff_theoremv2}.

\end{proof}

\appendix
\section{Gupta and Zagier}\label{sec:appendix}
\newcommand\appendixlabel{A}
\renewcommand{\thetheorem}{\appendixlabel.\arabic{theorem}}

To understand the diagonal part of $W_k(s)$, we must apply a Rankin--Selberg integral
to a function which is not of rapid decay.
For level one forms, Zagier~\cite{ZagierRankinSelberg} showed that one can make
some sense of Rankin--Selberg integrals with functions not of rapid decay by
truncating the standard fundamental domain at height $T$, a technique now referred to
as \emph{Zagier regularization}.
This paper requires an analogue of equation~(19) of~\cite{ZagierRankinSelberg}, which gives conditions under which the normalized Rankin--Selberg integral can be recognized as an inner product of the form $\langle F, E(\cdot, \overline{s}) \rangle$.

Performing Zagier's argument over a congruence subgroup is tedious.
In~\cite{Gupta00, gupta2000corrigendum}, Gupta shows how to generalize Zagier's
results to congruence subgroups without using Zagier normalization.
Instead, Gupta decomposes the Rankin--Selberg integral into pieces and gives
direct meromorphic continuation to the decomposition.
However, Gupta does not provide a set of conditions under which one
can recognize the Rankin--Selberg integral directly as an inner product of
the form $\langle F, E(\cdot, \overline{s}) \rangle$.

In this appendix, we show how to prove the analogous statement to equation~(19)
of~\cite{ZagierRankinSelberg} for functions not of rapid decay over congruence
subgroups, using the methods of Gupta.
We first give a brief description of the primary ingredients in Gupta's proof.
We then show how to modify Gupta's proof in order to recognize the inner
product against an Eisenstein series.
For completeness, we state this for a general congruence subgroup and adapt our
notation in place of the notation of~\cite{Gupta00}.
Note that Gupta issued a corrigendum~\cite{gupta2000corrigendum} affecting some
of the argument and notation.

Let $\mathfrak{a}_1 = \infty, \mathfrak{a}_2, \ldots, \mathfrak{a}_h$ denote
the inequivalent cusps of a congruence subgroup $\Gamma$.
As above, let $\Gamma_{\mathfrak{a}_i}$ denote the stabilizer of the cusp
$\mathfrak{a}_i$. For each cusp $\mathfrak{a}_i$, fix a matrix
$\sigma_{i} \in \mathrm{SL}_2(\mathbb{Q})$ which induces an isomorphism $\Gamma_{\mathfrak{a}_i}
\cong \Gamma_\infty$ via conjugation and satisfies $\sigma_i \infty =
\mathfrak{a}_i$.
Just as in $\Gamma_0(4)$, to each cusp we associate an Eisenstein series
\begin{equation*}
  E_{\mathfrak{a}_i}(z, s)
  =
  \sum_{\gamma \in \Gamma_{\mathfrak{a}_i} \backslash \Gamma}
  \Im(\sigma_i^{-1} \gamma z)^s.
\end{equation*}
Assemble the Eisenstein series into the vector
$\vec{E}(z, s) = ( E_{\mathfrak{a}_1}, \ldots, E_{\mathfrak{a}_h})^T$.
The Eisenstein series satisfy a functional equation
$\vec{E}(z, s) = \Phi(s) \vec{E}(z, 1-s)$, where
$\Phi(s) = (\phi_{ij}(s))_{h \times h}$ is the scattering matrix consistent with the formula \eqref{eq:scatter}.
Additional details concerning $\vec{E}(z, s)$ and $\Phi(s)$ can be found
in the discussion leading up to Theorem~4.4.2 in~\cite{kubota1973elementary}.

Let $F(z)$ denote a continuous function invariant under the action of $\Gamma$,
and let $f_{\mathfrak{a}_i}(z)$ denote the Fourier expansion of $F$ at the
cusp $\mathfrak{a}_i$, given by
\begin{equation*}
  f_{\mathfrak{a}_i}(z) = F(\sigma_i z)
  =
  \sum_{m \in \mathbb{Z}} a^{(\mathfrak{a}_i)}_m(y) e(mx).
\end{equation*}
Further, suppose that
\begin{equation*}
  f_{\mathfrak{a}_i}(z)
  =
  \psi_{\mathfrak{a}_i}(y) + O(y^{-N})
  \qquad (\text{for all } N \text{ as } \Im(z) \to \infty),
\end{equation*}
where $\psi_{\mathfrak{a}_i}$ is a function of the form
\begin{equation*}
  \psi_{\mathfrak{a}_i}(y)
  =
  \sum_{j = 1}^\ell
  \frac{c_{ij}}{n_{ij}!}y^{\alpha_{ij}} \log^{n_{ij}} y.
\end{equation*}
(We note that the corresponding equation~\cite[(3)]{Gupta00} omits the factorial.)
Denote the largest exponent of polynomial contribution
by $\Theta = \max(\alpha_{ij})$.
Finally, define the Rankin--Selberg transform of $F$ at $\mathfrak{a}_i$ as
\begin{equation*}
  \begin{split}
    R_{\mathfrak{a}_i}(F, s)
    &=
    \int_0^\infty \big(
      a_0^{(\mathfrak{a}_i)}(y) - \psi_{\mathfrak{a}_i}(y)
    \big) y^{s-2} dy
    \\
    &= \int_0^\infty \int_0^1 \big(
      F(\sigma_i z) - \psi_{\mathfrak{a}_i}(y)
    \big) y^s \frac{dx dy}{y^2}.
  \end{split}
\end{equation*}

The main theorem of~\cite{Gupta00} states that $R_{\mathfrak{a}_i}(F, s)$
has a meromorphic continuation to all $s$ in which the only potential poles
are at $s = 0, 1, \alpha_{ij}, 1-\alpha_{ij}$, and $\rho/2$, where
$\alpha_{ij}$ ranges over $i$ and the $1 \leq j \leq \ell$ in the definition of
$\psi_{\mathfrak{a}_i}(y)$, and $\rho$ ranges over the nontrivial zeros
of the Riemann zeta function.
Further, $R_{\mathfrak{a}_i}(F, s)$ satisfies a functional equation relating
it to the Rankin--Selberg transforms at the other cusps.

To prove this for a fixed cusp $\mathfrak{a}$, Gupta decomposes
$R_{\mathfrak{a}}$ into the sum
\begin{equation}\label{eq:gupta_decomp}
  R_{\mathfrak{a}}(F, s)
  =
  I_{\mathfrak{a},K}(s)
  +
  I_{\mathfrak{a}, F}(s)
  +
  I_{\mathfrak{a}, F, \psi}(s)
  +
  I_{\mathfrak{a}, \psi}(s),
\end{equation}
in which
\begin{align*}
  I_{\mathfrak{a}, K}(s)
  &=
  \iint_{K} F(z) E_{\mathfrak{a}}(z,s) \frac{dx dy}{y^2}
  \\
  I_{\mathfrak{a}, F}(s)
  &=
  \sum_{i = 1}^h \iint_\mathcal{D} F(\sigma_i z) \big(
    E_{\mathfrak{a}} (\sigma_i z, s) - e_{i \mathfrak{a}}(y,s)
  \big) \frac{dx dy}{y^2}
  \\
  I_{\mathfrak{a}, F, \psi}(s)
  &=
  \sum_{i = 1}^h \iint_\mathcal{D} \big(
    F(\sigma_i z) - \psi_{\mathfrak{a}_i}(y)
  \big) e_{i \mathfrak{a}}(y,s) \frac{dx dy}{y^2}
  \\
  I_{\mathfrak{a}, \psi}(s)
  &=
  \sum_{i = 1}^h \iint_\mathcal{D}
  \psi_{\mathfrak{a}_i} \phi_{i \mathfrak{a}}(s) y^{1 - s} \frac{dx dy}{y^2}
  -
  \iint_{\widetilde{\mathcal{D}}}
  \psi_{\mathfrak{a}} y^{s} \frac{dx dy}{y^2},
\end{align*}
where $\mathcal{D}$ is the typical fundamental
domain for $\mathrm{SL}_2(\mathbb{Z})$, $\widetilde{\mathcal{D}}$ is the
complement of $\mathcal{D}$ in the vertical strip,
$\{ x+iy \in \mathcal{H} : \lvert x \rvert \leq 1/2, \lvert z \rvert < 1 \}$,
$K$ is a compact set such that the fundamental domain
$\mathcal{D}_\Gamma = K \bigcup \sigma_i \mathcal{D}$,
and $e_{i \mathfrak{a}}(y,s)$ is the constant Fourier coefficient of
$E_{\mathfrak{a}}(\sigma_i z, s)$.
(The corrigendum~\cite{gupta2000corrigendum} to the original paper mainly
concerns the compact set $K$ in the decomposition and the corresponding
integral term $I_{\mathfrak{a}, K}$).

Most of Gupta's argument goes into proving~\eqref{eq:gupta_decomp}.
As $I_{\mathfrak{a}, F}(s)$ and $I_{\mathfrak{a}, F, \psi}(s)$ are chosen to
converge locally normally for all $s$ and $I_{\mathfrak{a},K}(s)$ is a
well-behaved integral over a compact region save for isolated poles due to the
Eisenstein series, it is straightforward to see that the remainder of the polar
behavior (and meromorphic continuation) of $R_\mathfrak{a}(F, s)$ can be
understood through $I_{\mathfrak{a}, \psi}$.
However, when $\max(\alpha_{ij}) < \frac{1}{2}$, the individual components of
$I_{\mathfrak{a}, F}$ and $I_{\mathfrak{a}, F, \psi}(s)$ converge, and it is
possible to exploit cancellation by rearranging these terms.
We now deviate from Gupta's proof.

\begin{lemma}\label{lem:gupta_three_integrals}
  The following are equivalent.
  \begin{enumerate}
    \item $\sum_{i = 1}^h \iint_\mathcal{D}
      F(\sigma_i z) E_\mathfrak{a} (\sigma_i z, s) \frac{dx dy}{y^2}$
      converges.

    \item $\sum_{i = 1}^h \iint_\mathcal{D}
      F(\sigma_i z) e_{i \mathfrak{a}}(y,s)\frac{dx dy}{y^2}$
      converges.

    \item $\sum_{i = 1}^h \iint_\mathcal{D}
      \psi_{\mathfrak{a}_i}(y) e_{i \mathfrak{a}}(y,s) \frac{dx dy}{y^2}$
      converges.
  \end{enumerate}
  Convergence refers to local normal convergence.
\end{lemma}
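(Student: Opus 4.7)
The plan is to prove the three equivalences by pairwise comparison, exploiting the fact that the differences between consecutive integrals in (1), (2), (3) coincide exactly with the terms $I_{\mathfrak{a}, F}(s)$ and $I_{\mathfrak{a}, F, \psi}(s)$ appearing in Gupta's decomposition~\eqref{eq:gupta_decomp}, which are already known (from Gupta's analysis) to converge locally normally on all of $\mathbb{C}$.

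More precisely, first I would subtract the integral in (2) from that in (1), term-by-term in the sum over cusps, obtaining
\begin{equation*}
  \sum_{i=1}^h \iint_\mathcal{D} F(\sigma_i z)\bigl(E_\mathfrak{a}(\sigma_i z, s) - e_{i\mathfrak{a}}(y,s)\bigr)\frac{dx\,dy}{y^2},
\end{equation*}
which is by definition $I_{\mathfrak{a},F}(s)$. Because $E_\mathfrak{a}(\sigma_i z, s) - e_{i\mathfrak{a}}(y,s)$ subtracts off the constant term of the Fourier expansion, the remaining non-constant Fourier modes exhibit exponential decay in $y$ through their $K$-Bessel function dependence; Gupta uses precisely this decay, combined with the polynomial growth bound $F(\sigma_i z) \ll y^\Theta$, to show $I_{\mathfrak{a}, F}(s)$ converges locally normally for every $s \in \mathbb{C}$. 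Thus (1) and (2) differ by an entire function of $s$, and the convergence of one is equivalent to the convergence of the other.

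Next I would subtract (3) from (2), obtaining
\begin{equation*}
  \sum_{i=1}^h \iint_\mathcal{D} \bigl(F(\sigma_i z) - \psi_{\mathfrak{a}_i}(y)\bigr) e_{i\mathfrak{a}}(y,s)\frac{dx\,dy}{y^2},
\end{equation*}
which is exactly $I_{\mathfrak{a}, F, \psi}(s)$. By the defining property of $\psi_{\mathfrak{a}_i}$, the difference $F(\sigma_i z) - \psi_{\mathfrak{a}_i}(y)$ decays faster than any polynomial as $\Im z \to \infty$, while $e_{i\mathfrak{a}}(y,s)$ grows only polynomially in $y$; this is precisely Gupta's setup for establishing local normal convergence of $I_{\mathfrak{a}, F, \psi}(s)$ for every $s \in \mathbb{C}$. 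Hence (2) and (3) differ by an entire function, and are simultaneously convergent.

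Chaining these two equivalences yields the full statement. The only mild obstacle is verifying that Gupta's convergence arguments for $I_{\mathfrak{a}, F}$ and $I_{\mathfrak{a}, F, \psi}$ apply in our setting where $\Theta = \max(\alpha_{ij}) < \tfrac{1}{2}$; this is where the smallness of $\Theta$ matters, since it is the condition that allows each of (1), (2), (3) to converge individually over the fundamental domain $\mathcal{D}$ (rather than only over a truncated piece as in Zagier's original argument). Once this is confirmed, the pairwise differences are entire and the equivalence is immediate.
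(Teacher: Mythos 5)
Your proof is correct and follows essentially the same route as the paper: the pairwise differences between the three integrals are exactly Gupta's $I_{\mathfrak{a},F}(s)$ and $I_{\mathfrak{a},F,\psi}(s)$, whose convergence for all $s$ (away from poles of $E_\mathfrak{a}$) is taken from Gupta's analysis. Two small imprecisions worth flagging: calling these differences ``entire'' overstates the case, since $I_{\mathfrak{a},F}(s)$ inherits the isolated poles of $E_\mathfrak{a}(\cdot,s)$; and the hypothesis $\Theta < \tfrac{1}{2}$ plays no role in this equivalence lemma---it is only needed in the following lemma to show that any one of (1)--(3) actually converges in a strip, at which point the present lemma transfers that convergence to the others.
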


\begin{proof}
  The equivalence $(1) \iff (2)$ follows from the fact that
  $I_{\mathfrak{a}, F}(s)$ converges for all $s$ away from isolated poles of
  $E_\mathfrak{a}$.
  Similarly, $(2) \iff (3)$ follows from the convergence of $I_{\mathfrak{a},
  F, \psi}(s)$ for all $s$ away from isolated poles of $E_\mathfrak{a}$.
\end{proof}

\begin{lemma}\label{lem:last_gupta_integral_converges}
  Define $\Theta = \max(\alpha_{ij})$ and suppose that $\Theta < \frac{1}{2}$.
  Then the integrals
  \begin{equation*}
    \sum_{i = 1}^h \iint_\mathcal{D}
    \psi_{\mathfrak{a}_i}(y) e_{i \mathfrak{a}}(y,s) \frac{dx dy}{y^2}
  \end{equation*}
  converge locally normally if $\Theta < \Re(s) < 1 - \Theta$ and if $s$ is not
  a pole of any entry $\phi_{i\mathfrak{a}}$ of the scattering matrix.
\end{lemma}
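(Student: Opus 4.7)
The plan is to reduce this to an elementary convergence-of-power-integrals check, after substituting the explicit forms of both factors in the integrand. Recall from~\eqref{eisenstein_fourier_series} that the constant Fourier coefficient appearing in the lemma has the form
\[
  e_{i\mathfrak{a}}(y, s) = \delta_{[\mathfrak{a}_i = \mathfrak{a}]}\, y^s + \phi_{i\mathfrak{a}}(s)\, y^{1-s},
\]
where the entries $\phi_{i\mathfrak{a}}(s)$ of the scattering matrix $\Phi(s)$ are meromorphic on $\mathbb{C}$. Since $\psi_{\mathfrak{a}_i}(y)$ is the finite sum $\sum_{j} \tfrac{c_{ij}}{n_{ij}!} y^{\alpha_{ij}} \log^{n_{ij}} y$, the integrand in the lemma expands as a finite linear combination of monomial-times-log terms in $y$, and in particular is independent of $x$.

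First I would handle the $x$-integration trivially: $\mathcal{D}$ sits inside the strip $\lvert x \rvert \leq 1/2$ and the integrand has no $x$-dependence, so after the $x$-integration we pick up a factor bounded by $1$. Next I would note that $\mathcal{D}$ has $y \geq \tfrac{\sqrt{3}}{2}$, so no convergence issue arises at $y \to 0$. What remains is the convergence of integrals of the shape
\[
  \int^{\infty} y^{\alpha_{ij} + s - 2} \log^{n_{ij}}(y)\, dy
  \quad \text{and} \quad
  \int^{\infty} y^{\alpha_{ij} + (1 - s) - 2} \log^{n_{ij}}(y)\, dy
\]
near $y = \infty$. Each converges absolutely iff the corresponding real part of the exponent is strictly less than $-1$, i.e.\ iff $\alpha_{ij} < 1 - \Re(s)$ and $\alpha_{ij} < \Re(s)$. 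Taking the maximum over all $(i,j)$ yields exactly the condition $\Theta < \Re(s) < 1 - \Theta$.

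For local normal convergence, I would fix an arbitrary compact set $K$ in the strip $\Theta < \Re(s) < 1 - \Theta$ avoiding the poles of all $\phi_{i\mathfrak{a}}(s)$. On $K$, each $\phi_{i\mathfrak{a}}(s)$ is holomorphic and bounded, and the exponents $\alpha_{ij} + \Re(s) - 2$ and $\alpha_{ij} - \Re(s) - 1$ are uniformly bounded above by some $-1 - \eta$ with $\eta > 0$ depending on $K$ and $\Theta$. The logarithmic factors $\log^{n_{ij}}(y)$ are absorbed into any slightly larger exponent, so the integrals admit a uniform bound over $K$, which gives local normal convergence and completes the argument.

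The main obstacle is not really an obstacle so much as a matter of care: one must remember that only the $y \to \infty$ behavior matters (since $\mathcal{D}$ is bounded below in $y$), that the logarithmic factors are harmless when the power exponent is strictly subcritical, and that the excluded values of $s$ in the lemma are exactly the poles of the coefficients $\phi_{i\mathfrak{a}}(s)$ attached to the off-diagonal terms $y^{1-s}$. Once these points are observed, the estimate is an immediate power-counting computation.
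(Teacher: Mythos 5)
Your proof is correct and takes essentially the same approach as the paper: expand the constant Fourier coefficient $e_{i\mathfrak{a}}(y,s) = \delta_{[\mathfrak{a}_i=\mathfrak{a}]}y^s + \phi_{i\mathfrak{a}}(s)y^{1-s}$, substitute the explicit form of $\psi_{\mathfrak{a}_i}$, observe the integrand is $x$-independent and that $\mathcal{D}$ is bounded below in $y$, and then check convergence of the resulting power-times-log integrals at $y \to \infty$, which gives exactly the exponent conditions $\Theta < \Re(s) < 1-\Theta$. Your uniform-on-compacta argument for local normal convergence is a harmless elaboration of what the paper leaves implicit.
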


\begin{proof}
  In terms of the entries of the scattering matrix $\Phi(s)$, the constant
  coefficient of the Eisenstein series can be written as
  \begin{equation}\label{eq:constant_term_from_scat_matrix}
    e_{i \mathfrak{a}}(y,s)
    =
    \delta_{[\mathfrak{a}_i = \mathfrak{a}]}y^s
    +
    \phi_{i \mathfrak{a}}(s) y^{1 - s}.
  \end{equation}
  Therefore it suffices to consider the convergence of the integrals
  \begin{equation*}
    I_1 = \iint_{\mathcal{D}}
    \psi_{\mathfrak{a}}(y) y^s \frac{dx dy}{y^2},
    \qquad
    I_2 = \phi_{i\mathfrak{a}}(s) \iint_{\mathcal{D}}
    \psi_{\mathfrak{a}_i}(y) y^{1-s} \frac{dx dy}{y^2}.
  \end{equation*}
  The scattering matrix element $\phi_{i\mathfrak{a}}(s)$ is independent of $y$
  and can be taken outside of the integral.
  Since poles of $\phi_{i\mathfrak{a}}(s)$ give poles of $I_2$, we suppose for the
  remainder of the proof that $s$ is not a pole of any $\phi_{i\mathfrak{a}}(s)$.

  The fundamental domain $\mathcal{D}$ can be split into the region
  $[-1/2, 1/2] \times [1, \infty)$ and the compact region
  \begin{equation}\label{def:Omega_region}
    \Omega
    :=
    \{ x + iy : \lvert x \rvert \leq 1/2; y \leq 1; \lvert z \rvert \geq 1 \}.
  \end{equation}
  Since the integrands of $I_1$ and $I_2$ are continuous and bounded, both
  integrals converge on $\Omega$.
  Further, the integrands are independent of $x$.
  Thus it suffices to consider convergence of the integrands over the halfline
  $y \geq 1$.

  Expanding and substituting $\psi_{\mathfrak{a}}$ shows that $I_1$ converges
  if and only if the integrals
  \begin{equation*}
    \int_1^\infty \sum_{j = 1}^\ell
    \frac{c_{ij}}{n_{ij}!} y^{s + \alpha_{ij} - 1} \log^{n_{ij}} y \frac{dy}{y}
    =
    \sum_{j = 1}^\ell \frac{c_{ij}}{(s + \alpha_{ij} - 1)^{n_{ij} + 1}}
  \end{equation*}
  converge (where $i$ in this expression is chosen so that
  $\mathfrak{a}_i = \mathfrak{a}$).
  The $j$th integral converges exactly when
  $\Re(s) < 1 - \alpha_{ij}$, so that for $\Re (s) < 1 - \Theta$ the above
  equality holds.
  Similarly, expanding $\psi_{\mathfrak{a}_i}$ in $I_2$ shows
  that $I_2$ converges absolutely when $\Theta < \Re(s)$ (and has poles of
  order $n_{ij}$ at $s = \alpha_{ij}$).
\end{proof}

Suppose now that the $\psi_{\mathfrak{a}_i}$ satisfy
$\Theta := \max(\alpha_{ij}) < \frac{1}{2}$, and note that
\begin{equation*}
  \langle F, E_\mathfrak{a} (\cdot, \overline{s}) \rangle
  =
  \iint_K F(z)E_\mathfrak{a}(z,s) \frac{dx dy}{y^2}
  +\iint_{\mathcal{D}}
  \sum_{i = 1}^h F(\sigma_i z) E_\mathfrak{a}(\sigma_i z, s) \frac{dx dy}{y^2}.
\end{equation*}
Then for all $s$ away from
poles of entries of the scattering matrix $\Phi(s)$ and satisfying $\Theta < \Re s < 1 - \Theta$, we can simplify the
decomposition in~\eqref{eq:gupta_decomp} using
Lemmas~\ref{lem:gupta_three_integrals}
and~\ref{lem:last_gupta_integral_converges}.
After collecting $\langle F, E_\mathfrak{a} \rangle$ from $I_{\mathfrak{a},K}$ and 
the first term in $I_{\mathfrak{a}, F}$, and cancelling the second term from
$I_{\mathfrak{a}, F}$ with the first term of $I_{\mathfrak{a}, F, \psi}$,
it follows that
\begin{equation*}
  I_{\mathfrak{a},K} + I_{\mathfrak{a}, F} + I_{\mathfrak{a}, F, \psi}
  =
  \langle F, E_\mathfrak{a} \rangle
  -
  \sum_{i = 1}^h \iint_{\mathcal{D}}
  \psi_\mathfrak{a}(y) e_{i \mathfrak{a}}(y,s) \frac{dx dy}{y^2}.
\end{equation*}
Using~\eqref{eq:constant_term_from_scat_matrix} to expand $e_{i \mathfrak{a}}(y,s)$
and adding $I_{k, \psi}$ shows that
\begin{equation*}
  R_\mathfrak{a}(F, s)
  =
  \langle F, E_\mathfrak{a} \rangle
  -
  \iint_{\mathcal{D}} \psi_\mathfrak{a}(y) y^s \frac{dx dy}{y^2}
  -
  \iint_{\widetilde{\mathcal{D}}} \psi_\mathfrak{a}(y) y^s \frac{dx dy}{y^2}.
\end{equation*}
Note that in this expression, the integral over $\widetilde{\mathcal{D}}$
is not in the region of convergence, and we are referring to the analytic
continuation of the integral there.

The integrals over $\mathcal{D}$ and $\widetilde{\mathcal{D}}$ cancel
completely.
To see this, let $\Omega$ be as in~\eqref{def:Omega_region}.
For the first integral, we see from the evaluation of $I_1$ in
Lemma~\ref{lem:last_gupta_integral_converges} that
\begin{equation*}
  \iint_{\mathcal{D}} \psi_\mathfrak{a}(y) y^s \frac{dx dy}{y^2}
  =
  \sum_{j = 1}^\ell \frac{c_{ij}}{(s + \alpha_{ij} - 1)^{n_{ij} + 1}}
  +
  \iint_{\Omega} \psi_\mathfrak{a}(y) y^s \frac{dx dy}{y^2}
\end{equation*}
On the other hand,
\begin{equation*}
  \begin{split}
    \iint_{\widetilde{\mathcal{D}}} \psi_\mathfrak{a}(y) y^s \frac{dx dy}{y^2}
    &=
    \int_0^1 \int_{-1/2}^{1/2} \psi_\mathfrak{a}(y) y^{s-1} dx \frac{dy}{y}
    -
    \iint_{\Omega} \psi_\mathfrak{a}(y) y^s \frac{dx dy}{y^2}
    \\
    &=
    -\sum_{j = 1}^\ell \frac{c_{ij}}{(s + \alpha_{ij} - 1)^{n_{ij} + 1}}
    -
    \iint_{\Omega} \psi_\mathfrak{a}(y) y^s \frac{dx dy}{y^2}.
  \end{split}
\end{equation*}
Thus the two integrals cancel.
The same proof applies to $F$ at each cusp $\mathfrak{a}$, and we have proved
the following proposition.

\begin{proposition}\label{prop:gupta_general}
  Continuing with the notation above, for $s$ satisfying
  $\Theta < \Re(s) < 1 - \Theta$, we have that
  \begin{equation*}
    R_\mathfrak{a}(F, s)
    =
    \langle F (\sigma_\mathfrak{a} \cdot), E_\mathfrak{a}(\cdot, \overline{s})\rangle.
  \end{equation*}
\end{proposition}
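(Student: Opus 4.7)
The plan is to follow Gupta's four-term decomposition
\[
  R_{\mathfrak{a}}(F, s) = I_{\mathfrak{a},K}(s) + I_{\mathfrak{a}, F}(s) + I_{\mathfrak{a}, F, \psi}(s) + I_{\mathfrak{a}, \psi}(s)
\]
and show that under the growth assumption $\Theta < \tfrac{1}{2}$, the interior strip $\Theta < \Re s < 1 - \Theta$ is exactly the region in which one can disentangle these four pieces, reassemble a genuine inner product against $E_{\mathfrak{a}}$, and verify that everything else cancels. I would first note that outside this strip, the individual pieces of $I_{\mathfrak{a},F}$ and $I_{\mathfrak{a},F,\psi}$ do not separately converge because the $\psi_{\mathfrak{a}_i}$ blow up at the cusps; the virtue of the strip is that Lemma~\ref{lem:last_gupta_integral_converges} guarantees local normal convergence of the term-by-term pieces, which then unlocks Lemma~\ref{lem:gupta_three_integrals} to let us legitimately split sums and integrals.

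The main manipulation is algebraic. Combine $I_{\mathfrak{a}, K}$ with the $F(\sigma_i z) E_{\mathfrak{a}}(\sigma_i z,s)$ part of $I_{\mathfrak{a},F}$ to recover the full inner product $\langle F, E_{\mathfrak{a}}(\cdot,\overline{s})\rangle$ via the union $\mathcal{D}_\Gamma = K \cup \bigcup_i \sigma_i \mathcal{D}$. The subtracted constant-coefficient term in $I_{\mathfrak{a},F}$ then cancels against the $F(\sigma_i z) e_{i\mathfrak{a}}(y,s)$ term in $I_{\mathfrak{a},F,\psi}$, leaving only the contribution $-\sum_i \iint_{\mathcal{D}} \psi_{\mathfrak{a}_i}(y) e_{i\mathfrak{a}}(y,s)\,dx\,dy/y^2$. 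Adding $I_{\mathfrak{a},\psi}$ and substituting the scattering-matrix expansion $e_{i\mathfrak{a}}(y,s) = \delta_{[\mathfrak{a}_i=\mathfrak{a}]} y^s + \phi_{i\mathfrak{a}}(s) y^{1-s}$ collapses the sum over $i$; the $y^{1-s}$ contributions cancel against the first piece of $I_{\mathfrak{a},\psi}$, and what remains is
\[
  R_{\mathfrak{a}}(F,s) = \langle F, E_{\mathfrak{a}}(\cdot,\overline{s})\rangle - \iint_{\mathcal{D}} \psi_{\mathfrak{a}}(y) y^s \frac{dx\,dy}{y^2} - \iint_{\widetilde{\mathcal{D}}} \psi_{\mathfrak{a}}(y) y^s \frac{dx\,dy}{y^2}.
\]

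To finish, I would show the two remaining integrals cancel. Split $\mathcal{D} = \Omega \cup ([-\tfrac12,\tfrac12]\times[1,\infty))$, where $\Omega$ is the compact piece from Lemma~\ref{lem:last_gupta_integral_converges}; similarly $\widetilde{\mathcal{D}}$ contributes $[-\tfrac12,\tfrac12]\times(0,1)$ minus $\Omega$. The $\Omega$ contributions appear with opposite signs and cancel directly. For the half-line parts, substitute the explicit form $\psi_{\mathfrak{a}}(y) = \sum_j (c_j/n_j!) y^{\alpha_j} \log^{n_j} y$ and integrate in $y$; the upper and lower halves each produce $\pm \sum_j c_j / (s + \alpha_j - 1)^{n_j+1}$ (a term convergent only on opposite half-planes, each equal to the other by analytic continuation), and so they cancel as meromorphic functions.

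The main technical obstacle, and the reason the proof needs to be handled carefully, is the last step: the integral over $\widetilde{\mathcal{D}}$ is \emph{not} absolutely convergent in the strip $\Theta < \Re s < 1-\Theta$, so the cancellation with the $\mathcal{D}$-integral is really a statement about the analytic continuations of two rational-in-$s$ expressions agreeing. One must be careful that the equality $R_{\mathfrak{a}}(F,s) = \langle F(\sigma_{\mathfrak{a}} \cdot), E_{\mathfrak{a}}(\cdot, \overline{s})\rangle$ is an identity of meromorphic functions, with both sides interpreted via the respective analytic continuations, and that potential poles from entries of the scattering matrix $\Phi(s)$ or from $\alpha_{ij}$ are excluded from the region of validity.
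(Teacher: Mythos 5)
Your proposal reproduces the paper's proof step for step: the same Gupta four-term decomposition, the same use of Lemmas~\ref{lem:gupta_three_integrals} and~\ref{lem:last_gupta_integral_converges} to justify regrouping in the strip $\Theta < \Re s < 1 - \Theta$, the same reassembly of $\langle F, E_{\mathfrak{a}}(\cdot,\overline{s})\rangle$ from $I_{\mathfrak{a},K}$ and $I_{\mathfrak{a},F}$, the same telescoping of the remaining pieces via the scattering-matrix expansion of $e_{i\mathfrak{a}}$, and the same final cancellation of the $\mathcal{D}$- and $\widetilde{\mathcal{D}}$-integrals by splitting off $\Omega$ and matching rational functions via analytic continuation. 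Your remark on the $\widetilde{\mathcal{D}}$ integral being interpreted by analytic continuation mirrors the paper's own caveat, so this is correct and essentially the same argument.
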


For our application, we take $F = \mathcal{V}$ on $\Gamma_0(4)$.
Recalling the proof of Lemma~\ref{lem:subtracting_eisenstein}, we have that
$\psi_\infty(y)$ and $\psi_0(y)$ consist only of constant multiples of
$y^{1 - \frac{k}{2}}$, and thus $\Theta = 1 - \frac{k}{2}$.
A short computation (very similar to the classic Rankin--Selberg computation)
shows that
\begin{equation}
  R_\infty(\mathcal{V}, s) = R_0(\mathcal{V}, s)
  =
  \frac{\Gamma(s + \frac{k}{2} - 1)}{(4\pi)^{s + \frac{k}{2} - 1}}
  \sum_{m = 1}^\infty \frac{r_k(m)^2}{m^{s + \frac{k}{2} - 1}}.
\end{equation}
Note that one should expect that
$R_{\infty}(\mathcal{V}, s) = R_0(\mathcal{V}, s)$,
since $\theta \big\rvert_{\sigma_0}(z) = \theta(z)$.
Applying Proposition~\ref{prop:gupta_general} gives the following Corollary.

\begin{corollary}\label{cor:diagonal_from_gupta}
  For $s$ satisfying $1 - \frac{k}{2} < \Re(s) < \frac{k}{2}$,
  we have
  \begin{equation*}
    \frac{\Gamma(s + \frac{k}{2} - 1)}{(4\pi)^{s + \frac{k}{2} - 1}}
    \sum_{m = 1}^\infty \frac{r_k(m)^2}{m^{s + \frac{k}{2} - 1}}
    =
    \langle \mathcal{V}, E_\infty(\cdot, \overline{s}) \rangle
    =
    \langle \mathcal{V}(\sigma_0 \cdot), E_0(\cdot, \overline{s}) \rangle.
  \end{equation*}
  This function has meromorphic continuation to the plane with potential
  poles at $s = \frac{k}{2}, 1, 0, 1-\frac{k}{2}$, and at zeros of $\zeta(2s)$,
  and satisfies a functional equation of shape $s \mapsto 1-s$.
\end{corollary}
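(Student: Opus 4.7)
The plan is to apply Proposition~\ref{prop:gupta_general} with $F = \mathcal{V}$ on $\Gamma_0(4)$ at the cusps $\mathfrak{a} \in \{\infty, 0\}$, and then to carry out the Rankin--Selberg unfolding to identify $R_\mathfrak{a}(\mathcal{V}, s)$ with the stated Dirichlet series. First I would extract the parameter $\Theta$ from the construction of $\mathcal{V}$. Lemma~\ref{lem:subtracting_eisenstein} shows that subtracting $E_\infty(z, k/2)$ and $E_0(z, k/2)$ kills the $y^{k/2}$ parts of $|\theta^k(z)|^2 y^{k/2}$ at $\infty$ and $0$, leaving only the single $y^{1 - k/2}$ monomial produced by the reflected constant coefficient of each Eisenstein series (and nothing at the cusp $1/2$, where $\theta^k$ already decays). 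Hence $\psi_\mathfrak{a}(y)$ is a single monomial of degree $1 - k/2$, so $\Theta = 1 - k/2$, and the hypothesis $\Theta < 1/2$ is satisfied for $k \geq 3$ with admissible strip $1 - k/2 < \Re s < k/2$ exactly matching the statement.

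Second, I would compute $R_\infty(\mathcal{V}, s)$ directly. The constant Fourier coefficient of $|\theta^k(z)|^2 y^{k/2}$ at $\infty$ is $y^{k/2} \sum_{m \geq 0} r_k(m)^2 e^{-4\pi m y}$; the constant coefficients of $E_\infty(z, k/2)$ and $E_0(z, k/2)$ at $\infty$ account for the $m = 0$ term together with the $y^{1 - k/2}$ piece that forms $\psi_\infty(y)$. In the region of absolute convergence $\Re s > k/2$, I would interchange summation and integration and use
\begin{equation*}
  \int_0^\infty y^{s + k/2 - 2} e^{-4\pi m y} \, \frac{dy}{y} = \frac{\Gamma(s + k/2 - 1)}{(4 \pi m)^{s + k/2 - 1}}
\end{equation*}
to obtain $R_\infty(\mathcal{V}, s) = (4\pi)^{1 - s - k/2} \Gamma(s + k/2 - 1) \sum_{m \geq 1} r_k(m)^2 m^{1 - s - k/2}$. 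Analytic continuation of this identity into the admissible strip, combined with Proposition~\ref{prop:gupta_general}, yields the first equality. Since $\theta|_{\sigma_0}(z) = \theta(z)$, the Fourier expansion of $\mathcal{V}(\sigma_0 z)$ at $\infty$ coincides with that of $\mathcal{V}(z)$, so $R_0(\mathcal{V}, s) = R_\infty(\mathcal{V}, s)$ and the second equality is immediate.

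For the meromorphic continuation and $s \mapsto 1 - s$ symmetry, I would invoke the standard properties of the Eisenstein vector $\vec E(z, s)$ on $\Gamma_0(4)$: pairing against $\mathcal{V}$ inherits the meromorphic continuation of $E_\infty(\cdot, s)$, whose pole at $s = 1$ accounts for the pole at $s = 1$; the functional equation $\vec E(z, s) = \Phi(s) \vec E(z, 1 - s)$ reflects this to produce a potential pole at $s = 0$ and yields the $s \mapsto 1 - s$ symmetry when combined with $R_\infty = R_0$; the gamma factor $\Gamma(s + k/2 - 1)$ appearing on the Dirichlet series side places potential poles at $s = 1 - k/2$ and (after reflection) $s = k/2$; and the factor $\zeta^{(2)}(2s)$ in the denominators of the scattering-matrix entries given by~\eqref{eisenstein_coefficient_dirichlet_series} contributes potential poles at zeros of $\zeta(2s)$.

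The main obstacle is not any single technical step but rather the bookkeeping of which subtractions cancel which asymptotic pieces at the three cusps. In particular, one must verify that the pair $E_\infty + E_0$ exactly absorbs the $y^{k/2}$ growth at both $\infty$ and $0$, leaving a single $y^{1 - k/2}$ term at each (so $\Theta = 1 - k/2$ and not something larger), and that the $m = 0$ contribution of $|\theta^k|^2 y^{k/2}$ is absorbed into $\psi_\infty(y)$ rather than appearing spuriously in the final Dirichlet series. Once these normalizations are pinned down via Lemma~\ref{lem:subtracting_eisenstein}, Proposition~\ref{prop:gupta_general} and the unfolding calculation deliver the corollary.
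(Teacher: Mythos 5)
Your proposal follows essentially the same route as the paper: identify $\Theta = 1-\tfrac{k}{2}$ from Lemma~\ref{lem:subtracting_eisenstein}, unfold $R_\infty(\mathcal{V},s)$ via the classical Rankin--Selberg computation, use $\theta\big\rvert_{\sigma_0}=\theta$ to get $R_0=R_\infty$, and then apply Proposition~\ref{prop:gupta_general} together with Gupta's meromorphy and functional equation. The only blemish is a typo in your displayed Mellin integral (the measure should be $y^{s+k/2-1}\,dy/y$, not $y^{s+k/2-2}\,dy/y$, to yield the stated value $\Gamma(s+\tfrac{k}{2}-1)/(4\pi m)^{s+k/2-1}$); the final formula and all the surrounding bookkeeping are correct.
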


\begin{remark}
  It also follows that
  \begin{equation*}
    \frac{\Gamma(s + \frac{k}{2} - 1)}{(4\pi)^{s + \frac{k}{2} - 1}}
    L(s, \theta^k \times \theta^k)
    =
    \zeta(2s) \langle \mathcal{V}, E_\infty(\cdot, \overline{s}) \rangle,
  \end{equation*}
  analogous to the relation for a typical Rankin--Selberg convolution
  $L(s, f \times g)$ between cusp forms.
  For this reason, we call $L(s, \theta^k \times \theta^k)$ the
  Rankin--Selberg convolution of $\theta^k$ and $\theta^k$.
\end{remark}

\bibliographystyle{alpha}
\bibliography{compiled_bibliography}

\end{document}